\documentclass[11pt]{article}

\usepackage[margin=1in]{geometry}
\usepackage{amsmath,amssymb,bm}
\usepackage{microtype,booktabs,array}
\usepackage{mathrsfs,mathabx}
\usepackage{amsthm}
\usepackage{hyperref}
\usepackage{graphicx}
\usepackage{enumerate} 
\usepackage{enumitem}
\numberwithin{equation}{section} 
\usepackage{csquotes}
\usepackage{todonotes}
\usepackage{mathtools}

\usepackage[comma,sort, numbers]{natbib}

\DeclareMathOperator{\interior}{int}

\setlist[itemize]{topsep=4pt,itemsep=2pt,leftmargin=1.7em}
\setlist[enumerate]{topsep=4pt,itemsep=3pt,leftmargin=2em}
\definecolor{propRed}{HTML}{A84343}
\definecolor{propBlue}{HTML}{2968A3}
\definecolor{propTeal}{HTML}{267C79}
\definecolor{propAmber}{HTML}{AD7A24}
\definecolor{propPurple}{HTML}{75649A}
\tikzset{propvertex/.style={circle,fill=propRed,inner sep=1.8pt},
  propdual/.style={circle,fill=propTeal,inner sep=1.7pt},
  propzero/.style={circle,draw=propTeal,fill=propTeal!9,
    minimum size=6.5mm,inner sep=0pt,font=\small},
  propone/.style={circle,draw=propAmber,fill=propAmber!13,
    minimum size=6.5mm,inner sep=0pt,font=\small},
  propedge/.style={line width=.65pt},
  propdualedge/.style={draw=propTeal,dashed,line width=.85pt},
  propport/.style={draw=black!55,densely dashed,line width=.7pt}}

\definecolor{hred}{RGB}{174,40,54}
\definecolor{hblue}{RGB}{32,91,157}
\definecolor{hteal}{RGB}{0,118,116}

\usepackage[utf8]{inputenc}
\usepackage{graphicx,tikz,tcolorbox} 
\usetikzlibrary{arrows.meta}
\usetikzlibrary{calc,decorations.pathreplacing}
\usepackage{placeins}

\newtheorem{theorem}{Theorem}[section]
\newtheorem{observation}[theorem]{Observation}

\newtheorem{lemma}[theorem]{Lemma}
\newtheorem{proposition}[theorem]{Proposition} 
\newtheorem{question}[theorem]{Question}

\theoremstyle{definition}
\newtheorem{definition}[theorem]{Definition}

\newtheorem{remark}[theorem]{Remark}

\allowdisplaybreaks

\title{ Almost Empty Monochromatic Polygons in Planar Point Sets }

\author{Bhaswar B. Bhattacharya\thanks{Department of Statistics and Data Science, University of Pennsylvania and National University of Singapore,  \texttt{bhaswar@wharton.upenn.edu}} \and 
Sandip Das\thanks{Advanced Computing and Microelectronics Unit, Indian Statistical Institute, Kolkata, \texttt{sandip.das.69@gmail.com}}
\and 
Sk Samim Islam\thanks{School of Computer Science and Engineering, University of Electronic Science and Technology of China, Chengdu, \texttt{samimislam08@gmail.com}}
\and 
Aashirwad Mohapatra\thanks{Advanced Computing and Microelectronics Unit, Indian Statistical Institute, Kolkata, \texttt{aashirwad\_r@isical.ac.in}} 
\and 
Saumya Sen\thanks{Advanced Computing and Microelectronics Unit, Indian Statistical Institute, Kolkata, \texttt{saumyasen72@gmail.com}}
} 
\date{}

\begin{document}

\maketitle

\begin{abstract}  
For integers $r \ge 3$, $c \ge 2$, and $s \ge 0$, let $\mathsf{M}_r^{\star}(c,s)$ denote the smallest integer such that any set of at least $\mathsf{M}_r^{\star}(c,s)$ points in the plane in general position (no three on a line), colored with $c$ colors, contains a monochromatic $r$-gon (not necessarily convex) with at most $s$ interior points. Define $\lambda_r^{\star}(c)$ to be the smallest integer such that $\mathsf{M}_r^{\star}(c,\lambda_r^{\star}(c)) < \infty$. The case of triangles, that is, $r=3$, has been studied in recent years. In this paper, we establish bounds on $\lambda_r^{\star}(c)$, for $r \ge 4$. Specifically,  in the bichromatic case, we prove that $\lambda_r^{\star}(2)\leq \lfloor\frac{r-2}{2}\rfloor$, for every $r\geq3$. For hexagons, we sharpen this bound to $\lambda_6^{\star}(2)\leq1$, that is, every sufficiently large bichromatic planar point set in general position contains a monochromatic hexagon with at most one interior point.
Next, for any number of colors $c\geq3$ and $r\geq3$, we show that 
$$(r - 2) \left \lfloor \frac{c-1}{2} \right \rfloor \leq \lambda_r^\star(c)
\leq (r-2)(c-2)+\left\lfloor(r-2)\delta_c\right\rfloor,  $$
for some constant $\delta_c \in (0, \frac{1}{2}]$, depending on $c$. The upper bounds combine spiral polygonizations with nested convex-hull arguments and, in the hexagon case, incidence counting in $4$-connected triangulations. The lower bounds follow from colored Horton-set constructions.  
\end{abstract}


\section{ Introduction }
\label{sec:introduction}

The celebrated Erd\H{o}s-Szekeres theorem asserts that for every positive integer $r \geq 3$, there exists a smallest integer $\mathsf{ES}(r)$ such that any set of at least $\mathsf{ES}(r)$ points in the plane in general position (that is, no three on a line), contains a convex $r$-gon, that is, $r$ points which lie on the vertices of a convex polygon. In particular, \citet{erdos1935combinatorial,erdHos1960some} established the following bounds: 
$$2^{r - 2}+1 \leq \mathsf{ES}(r) \leq {{2r - 4} \choose {r - 2}} + 1.$$ 
They also famously conjectured that the lower bound is sharp. 
After a series of improvements, in a major breakthrough \citet{suk2017polygon} 
almost resolved this conjecture by showing that
$$\mathsf{ES}(r) = 2^{r+O(r^{\frac{2}{3}} \log r )}.$$ 
Later, \citet{holmsen2020two} improved the exponent to $r+O(\sqrt{r \log r} )$. The conjecture of Erd\H{o}s and Szekeres has also been verified for a few small values of $r$. In particular, it is known that $\mathsf{ES}(3) = 3$, $\mathsf{ES}(4) = 5$ \cite{kalbfleisch1970combinatorial}, $\mathsf{ES}(5) = 9$ \cite{kalbfleisch1970combinatorial}, and $\mathsf{ES}(6) = 17$ \cite{szekeres2006computer}.

A strengthening of the above question, formulated by Erd\H{o}s in \cite{erdos1978some}, asks whether every sufficiently large planar point set in general position contains an $r$-{\it hole}, that is, a convex $r$-gon whose interior contains no other points of the set. Denote by $\mathsf{H}(r)$, if it exists, the smallest integer such that every set of at least $\mathsf{H}(r)$ points in the plane in general position contains
an $r$-hole. Esther Klein showed $\mathsf{H}(4) = 5$ and Harborth \cite{harborth1978konvexe} proved $\mathsf{H}(5) = 10$. A few years later, Horton \cite{horton1983sets}  showed that it is possible to construct an arbitrarily large set of points without a 7-hole, which shows $\mathsf{H}(r)$ does not exist for $r \geq 7$. The existence of $\mathsf{H}(6)$ remained open for several years, until Gerken \cite{gerken2008empty} and Nicol{\'a}s \cite{nicolas2007empty} independently established the finiteness of $\mathsf{H}(6)$ (see also \cite{valtr2008empty}). In particular, Gerken \cite{gerken2008empty} showed that $\mathsf{H}(6)  \leq \mathsf{ES}(9)$, which  was later improved to  $\mathsf{H}(6) \leq \max\{\mathsf{ES}(8), 400\}$ by 
Koshelev \cite{koshelev2007erdos}. For the lower bound, Overmars \cite{overmars2002polygon} constructed a set of 29 points without any 6-hole, which implies $\mathsf{H}(6) \geq 30$. In a recent breakthrough, \citet{heule2024happy} showed that $\mathsf{H}(6) = 30$, finally resolving the 6-hole problem.

The non-existence of \(\mathsf{H}(r)\) for \(r \ge 7\) has motivated the study of various relaxed variants. One such relaxation, initiated by \citet{nyklova2003almost}, is the notion of {\it almost empty polygons}, that is, convex polygons containing few interior points (which is also closely related to the notion of islands \cite{bautista2011computing,balko2022holes}). Specifically, given integers $r \geq 3$ and $s \geq 0$, let $\mathsf{H}(r, s)$ denote the minimum integer such that any set of at least $\mathsf{H}(r,s)$ points in the plane in general position contains a convex $r$-gon with at most $s$ interior points. Obviously, $\mathsf{ES}(r) \leq \mathsf{H}(r,s) \leq \mathsf{H}(r ,0) = \mathsf{H}(r)$. Using a Horton set construction, Nyklov{\'a} \cite{nyklova2003almost} showed that $\mathsf{H}(r, s)$ does not exist when $s \leq 2^{\frac{r+6}{4}} - r - 3$. Later, \citet{koshelev2011computer} studied the almost empty hexagon problem and, more recently, 
\citet{huemer2022weighted} proved several equalities involving weighted sums of the numbers of convex $r$-gons with exactly $s$ interior points (generalizing results of \cite{pinchasi2006empty} on empty convex $r$-gons). A weaker restriction where the number of interior points is divisible by a certain number has been studied in \cite{bialostocki1991some}.

In another direction, Devillers et al. \cite{devillers2003chromatic} considered colored variants of the Erd\H{o}s-Szekeres problem. In such problems, a set of points is partitioned into $c \geq 2$ color classes, and a polygon spanned by points in the set is called {\it monochromatic} if all of its vertices are of the same color. In this setting, Grima et al. \cite{grima2009some} showed that any set of 9 points colored arbitrarily with 2 colors contains a monochromatic 3-hole. On the other hand, Devillers et al. \cite{devillers2003chromatic} showed that there exist arbitrarily large 3-colored point sets with no monochromatic 3-hole and there exist arbitrarily large 2-colored point sets with no monochromatic 5-hole. Devillers et al. \cite{devillers2003chromatic} also conjectured that any sufficiently large 2-colored point set contains a monochromatic 4-hole. While this conjecture remains open, a relaxation which does not require the 4-hole to be convex has been established by Aichholzer et al. \cite{aichholzer2010large}.  Other chromatic variations, such as balanced and rainbow holes, have also been studied (see, for example, \cite{arevalo2022rainbow,diaz2021note,fabila2021empty,kano2021discrete} and the references therein).

Combining the notions of almost empty polygons and monochromatic holes, one can consider the problem of existence of almost empty monochromatic polygons. Formally, given integers $r \geq 3$, $c \geq 2$, and $s \geq 0$, define $\mathsf{M}_r(c,s)$ to be the least integer such that any set of at least $\mathsf{M}_r(c,s)$  points in the plane in general position colored with $c$ colors contains a monochromatic convex $r$-gon with at most $s$ interior points. Moreover, denote by $\lambda_r(c)$ the least integer such that $\mathsf{M}_r(c, \lambda_r(c)) < \infty$. \citet{colorempty} considered the case of triangles (that is, $r=3$) and proved that, for any $c \geq 2$, 
\begin{align}\label{eq:almostempty3}
\left\lfloor \frac{c-1}{2} \right\rfloor \leq \lambda_3(c) \leq c-2. 
\end{align}
For $c \geq 4$, the upper bound was improved to $c-3$ by \citet{cravioto2019almost}. Recently, the authors further improved the upper bound to 
$\lambda_3(c) \leq c-\sqrt{c\log c}+o (\sqrt{c\log c} )$, for all sufficiently large $c$ \cite{interiorpoints}. In their paper, \citet{cravioto2019almost} also considered the case of quadrilaterals (that is, $r=4$) and showed that $\lambda_4(c) \leq 2c-3$, for $c \geq 2$ (see \cite[Theorem 3]{cravioto2019almost}). Relaxing the convexity requirement, one can define $\mathsf{M}_r^{\star}(c,s)$ to be the least integer such that any set of at least $\mathsf{M}_r^{\star}(c,s)$  points in the plane in general position colored with $c$ colors contains a monochromatic $r$-gon (not necessarily convex) with at most $s$ interior points. Further, denote by $\lambda_r^{\star}(c)$ the least integer such that $\mathsf{M}_r^{\star}(c, \lambda_r^{\star}(c)) < \infty$. Clearly, $\lambda_3^{\star}(c) = \lambda_3(c)$ and $\lambda_r^{\star}(c) \leq \lambda_r(c)$, for any $r \geq 4$. Moreover, in this notation the result of Aichholzer et al. \cite{aichholzer2010large} can be stated as $\lambda_4^{\star}(2)=0$. Liu and Zhang \cite{liu2018almost} proved that  for $c \geq 3$, 
\begin{align}\label{eq:almostempty4}
2 \Big \lfloor  \frac{c - 1}{2} \Big \rfloor  \leq  \lambda _4^{\star}(c) \leq \lambda_4(c) \leq 2c - 3 . 
\end{align}
They also showed that $\mathsf{M}_4^{\star}(2, 1) = 11$ and $\mathsf{M}_4^{\star}(3, 2) \leq 120$. 

In this paper, we initiate a systematic study of the existence of monochromatic $r$-gons with few interior points in colored point sets. Specifically, we establish bounds on $\lambda_r^{\star}(c)$ for all $c \geq 2$ and $r \geq 3$. The proofs combine several geometric and combinatorial techniques, including spiral polygonizations and nested convex-hull arguments. A new technical ingredient is an incidence-counting argument for $4$-connected triangulations, which may be of independent
interest.

\subsection{Two Colors}

Our first theorem gives a general upper bound in the bichromatic case.  

\begin{theorem}\label{thm:c2} 
For any integer $r \geq 3$, $\lambda_r^{\star}(2) \leq \left\lfloor\frac{r-2}{2} \right\rfloor$.
\end{theorem}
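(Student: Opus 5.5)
Fix $r \ge 3$ and set $s = \lfloor (r-2)/2 \rfloor$. The plan is to show that every sufficiently large $2$-colored set $P$ in general position contains a monochromatic simple $r$-gon with at most $s$ interior points. The case $r=3$ is the known fact $\lambda_3(2)=0$ (every $9$ bichromatic points contain a monochromatic empty triangle, Grima et al.), and $r=4$ is $\lambda_4^\star(2)=0$ by Aichholzer et al. So the real work starts at $r\ge 5$. The idea is to build the $r$-gon by gluing together empty monochromatic triangles, or to walk along a spiral through one color class, and to charge each interior point of the other color to a pair of vertices. This charging is where the factor $1/2$ in $\lfloor (r-2)/2 \rfloor$ should come from.

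\textbf{Step 1 (reduction to one color class).} By pigeonhole, one color class, say red $R$, has at least $|P|/2$ points. Take $|P|$ large enough that $R$ contains a large configuration: for instance a convex $N$-gon by Erd\H{o}s--Szekeres, or a long chain of nested convex layers. Any red polygon we build has no red interior points if we choose it as an empty polygon with respect to $R$. Only blue points can then lie inside, so we must control how many blue points lie in an $R$-empty region.

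\textbf{Step 2 (polygon from a fan of empty red triangles).} An $r$-gon can be triangulated into $r-2$ triangles. I would look for red points $p_1,\dots,p_r$ whose polygon $Q$ is the union of $r-2$ triangles that are each empty of red points. If a triangle contains blue points, replace it by a red-empty triangulation of a finer region. Alternatively, use the standard fact that a red-empty triangle containing two or more blue points forces a blue empty triangle or a blue empty quadrilateral nearby; this fact is what drives $\mathsf{M}_4^\star(2,1)=11$. The target is a pairing: for any two consecutive triangles $T_i,T_{i+1}$ of a suitable (spiral) polygonization of $R$, the union $T_i\cup T_{i+1}$ contains at most one blue point. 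Otherwise the blue points inside would themselves yield a monochromatic blue $r$-gon with few interior points, and we would recurse on the blue points using the same bound. Grouping the $r-2$ triangles into $\lceil (r-2)/2\rceil$ consecutive pairs would give about $\lceil (r-2)/2\rceil$ interior points, which is slightly off. So I would instead group so that the total count is $\lfloor (r-2)/2\rfloor$, using the boundary triangle at the end of the spiral, which can be chosen empty.

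\textbf{Step 3 (spiral polygonization).} Order the red points by a convex-layer spiral: peel the convex hulls of $R$ and connect consecutive points so that successive triangles formed by three consecutive spiral vertices tile the region between layers. Among the many windows of $r$ consecutive spiral vertices, an averaging argument should find one whose blue content is at most $s$. Blue points are counted against windows, and each blue point lies in boundedly many windows. If every window had $\ge s+1$ blue points, then the blue points would be dense enough to contain a blue polygon with few interior points. This gives a win-win, which I would close by induction on the configuration size.

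\textbf{Main obstacle.} The hardest step is the averaging/win-win in Steps 2--3. Concretely, one must prove that if every candidate red $r$-gon contains $\ge s+1$ blue points, then the blue points themselves contain an $r$-gon with $\le s$ red interior points. I would try first to establish it locally: show that any red-empty quadrilateral-like pair of adjacent spiral triangles containing two blue points, together with the neighboring red structure, lets one swap roles of the colors. Iterating this swap along the spiral should give the required blue polygon.
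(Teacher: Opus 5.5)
\textbf{There is a genuine gap: the step you flag as the main obstacle is the whole theorem, and what you propose for it does not work.} That step says: if every red $r$-gon has at least $s+1$ blue interior points, then some blue $r$-gon has at most $s$ red interior points.

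In Step~2 the pairing claim has no valid justification. One or two blue points inside two adjacent red-empty triangles produce no blue $r$-gon at all. Local considerations only give weak constraints, for instance that a red-empty triangle contains fewer than $r$ blue points. The fix for odd $r$ via an ``empty boundary triangle'' is also unsupported.

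More fundamentally, the factor $\tfrac12$ in $\lfloor (r-2)/2\rfloor$ is not a local pairing effect. It is a global density ratio coming from Euler's formula: $n$ points with $h$ hull vertices span about $2n-h$ triangles. Lemma~\ref{lem:r} turns this into about $(2n-h)/(r-2)$ pairwise interior-disjoint empty $r$-gons, by using the spiral $\mathcal P^+$ \emph{together with} its complementary spiral $\mathcal P^-$.

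Averaging over windows of consecutive vertices of a single spiral, as in your Step~3, accounts only for the $n-2$ triangles of one polygonization. It yields roughly $(s+1)n/(r-2)\le n$ blue points, so blue is not forced to be denser than red and no ``win--win'' arises. The induction on configuration size is never specified.

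Your instinct that blue should be forced to be dense, and that one should then rerun the argument on blue, is the right one. But it only closes with a precise nested count. You invoke Erd\H{o}s--Szekeres in Step~1 but never use the large convex polygon; in the paper it is exactly what makes the count close.

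The paper's argument runs as follows.
\begin{enumerate}
\item Choose a monochromatic (say red) convex $h_1$-gon $T$, with $h_1=4r^2$, having the \emph{minimum} number of points of $S$ inside. Let $i_1$ be the number of red points inside $T$.
\item Under the contrary hypothesis, the disjoint red-empty $r$-gons of Lemma~\ref{lem:r} force at least $t_2\ge\alpha(i_1+h_1/2-r+1)$ blue points in $T$, where $\alpha=2(s+1)/(r-2)$. The choice $s=\lfloor (r-2)/2\rfloor$ is exactly what gives $\alpha>1$.
\item By minimality of $T$, the blue set $T_2$ inside $T$ has fewer than $h_1$ hull vertices. So the same count applied to $T_2$ forces at least $\alpha(t_2-h_1/2-r+1)\ge\alpha^2 i_1+\tfrac{\alpha(\alpha-1)}{2}h_1-\alpha(\alpha+1)(r-1)>i_1$ red points in $CH(T_2)$.
\item All of these red points lie inside $T$, and there are only $i_1$ of them, which is a contradiction.
\end{enumerate}

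The hull control in step~3 is essential. Without a bound on $|V(CH(T_2))|$, the second summand of Lemma~\ref{lem:r} can vanish. The coefficient of $i_1$ then drops to $\alpha^2/2<1$ for large $r$, and since $i_1$ may be far larger than $h_1$, the count no longer contradicts anything.

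So your sketch is missing three ingredients:
\begin{itemize}
\item the $(2n-h)/(r-2)$ disjoint-polygon count;
\item the red$\to$blue$\to$red count with ratio $\alpha>1$ in each round;
\item the minimal convex $h_1$-gon absorbing the hull losses.
\end{itemize}
A local colour swap along the spiral does not substitute for them.
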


The proof of Theorem~\ref{thm:c2} is given in Section~\ref{sec:thm1pf}. The proof proceeds by constructing nested convex hulls of alternating colors. A key ingredient is the spiral
polygonization result of \citet{iwerks2011spiral}. The spiral structure allows triangular regions to be grouped consecutively into simple $r$-gons, giving many pairwise interior-disjoint $r$-gons containing no other points of their own color. We apply this construction first to the points of color 1 and then to the color 2 points lying inside their convex hull. If the desired monochromatic $r$-gon does not exist, each polygon must contain sufficiently many points of the opposite color. Interior-disjointness forces lower bounds on the number of points in successive layers. Starting inside a sufficiently large monochromatic convex polygon, whose existence follows from the Erd\H{o}s--Szekeres theorem, makes these lower bounds incompatible with the loss of boundary points when we return to the original color.

\begin{remark}  
For the first few values of $r$, Theorem~\ref{thm:c2} has the following consequences:

\begin{itemize}

\item For $r=3$, Theorem~\ref{thm:c2} gives $\lambda_3^{\star}(2)=0$, recovering the result of
Grima et al.~\cite{grima2009some} that every sufficiently large $2$-colored point set contains a monochromatic empty triangle.  

\item For $r=4$, Theorem~\ref{thm:c2} gives $\lambda_4^{\star}(2)\leq1$. In this case, however, the stronger result $\lambda_4^{\star}(2)=0$ is already known, due to Aichholzer et al.~\cite{aichholzer2010large}.

\item For $r=5$, the first case yielding a new bound, Theorem~\ref{thm:c2} gives $\lambda_5^{\star}(2)\leq1$. Thus, every sufficiently large $2$-colored point set contains a monochromatic pentagon with at most one interior point. It remains open whether $\lambda_5^{\star}(2)=0$, or equivalently, if 
every sufficiently large $2$-colored point set contains a monochromatic empty pentagon (see Section~\ref{sec:conclusions} for further discussion). Recall that \cite[Theorem~3.4]{devillers2003chromatic} shows that there exist arbitrarily large two-colored point sets with no monochromatic empty \emph{convex} 5-gon. Nevertheless, monochromatic empty 5-gons without the convexity requirement may still exist.

\item  For $r=6$, Theorem~\ref{thm:c2} gives $\lambda_6^{\star}(2)\leq2$. In the next theorem, we strengthen this to $\lambda_6^{\star}(2)\leq1$, showing that every sufficiently large $2$-colored planar point set in general position contains a monochromatic hexagon with at most one interior point. Obtaining this sharper bound requires a substantially more delicate argument and several new techniques, as explained below.

\end{itemize}
\end{remark}

\begin{theorem}\label{thm:hexagon} 
 $\lambda_6^{\star}(2) \leq 1$. In particular, $\mathsf{M}_6^{\star}(2, 1) \leq 306447$.  
\end{theorem}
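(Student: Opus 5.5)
We argue by contradiction. Suppose $P=R\cup B$ is a sufficiently large $2$-colored set (red and blue) with no monochromatic hexagon, convex or not, containing at most one interior point. We follow the nested-hull strategy from the proof of Theorem~\ref{thm:c2}, but replace the spiral-polygonization grouping with a finer counting argument on triangulations.

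\emph{Step 1: reduction to an empty red hull.} By the Erd\H{o}s--Szekeres theorem, once $|P|$ is large one color class, say red, contains a convex $k$-gon $Q$ with $k$ large; the explicit bound $306447$ should come from the value of $\mathsf{ES}$ needed here. Among all such convex red polygons we choose one with the fewest red points inside, or we pass to a minimal nested configuration. Let $R_0$ be the red points in $Q$, $B_0$ the blue points in its interior, and $T$ a triangulation of $R_0$.

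\emph{Step 2: local constraints.} Two adjacent triangles of $T$ form a red quadrilateral. Any four consecutive triangles along a path in the dual tree, glued across shared edges, form a simple red hexagon. Each such hexagon therefore contains at least $2$ blue points.

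\emph{Step 3: passing to a $4$-connected triangulation.} We may assume that every interior red point used is a vertex whose incident triangles can be grouped. A separating triangle of $T$ lets us recurse into a smaller configuration. So, passing to a minimal one, we may take $T$ to be $4$-connected, meaning it has no separating triangles.

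\emph{Step 4: the incidence count.} In a $4$-connected triangulation, every vertex has degree at least $4$, and the union of the triangles around a path of two adjacent interior edges is a simple hexagon. Double counting (hexagon, triangle) incidences gives a weighted lower bound: a constant fraction of the $2|R_0|-k-2$ triangles must contain blue points, and in fact we obtain $|B_0|\ge \alpha\,(2|R_0|-k-2)$ with $\alpha$ close to $1$.

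\emph{Step 5: closing the argument.} We now swap colors and consider the blue points. They lie inside $Q$, and their convex hull loses only its boundary vertices. Repeating Step 4 on the blue triangulation forces roughly as many red points inside the blue hull, and these must come from $R_0$ minus the red hull vertices. Iterating the alternation, the counts shrink by at least the number of hull vertices at each round. This contradicts the growth inequalities, in the same way as in Theorem~\ref{thm:c2}, but now with the threshold $s=1$.

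\emph{Main obstacle.} The hardest step is Step 4. We must show that hexagons with at most one interior point cannot be avoided while the blue points are spread thinly, about one per triangle. The naive grouping only guarantees about one blue point per $2$ triangles, which is exactly what yields $\lambda_6^\star(2)\le 2$.

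My first approach would be a discharging or incidence count over the degree-$\ge 4$ structure. Each blue point lying in a triangle $t$ is charged to all hexagons formed from $t$ together with the neighboring triangles around one of its vertices. We then show that the number of hexagons through $t$ is at most a bounded multiple of the total number of hexagons per triangle, so that an average of fewer than $2$ blue points per hexagon is forced.

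The remaining degenerate cases also need separate treatment: vertices on the hull, and low-degree configurations where some gluings are not simple.
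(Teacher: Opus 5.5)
\paragraph{The main gap is Step 4.} It is asserted rather than proved, and the double-counting or discharging you sketch cannot deliver it. Give every face weight $1/2$. Then every hexagon constraint is satisfied (four faces, total $2$). Any averaging over hexagon--triangle incidences is a fractional certificate, so it yields at best $W\ge\lambda(T)/2$. The spiral grouping already gives exactly this (Lemma~\ref{lem:hull}), and for $r=6$, $s=1$ it is borderline. Iterating it over nested hulls only shows that the hull of the starting class is a small fraction of that class (Proposition~\ref{prop:lhull}); it never produces a contradiction.

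To beat $1/2$ you must use that the weights are integers. The paper sorts faces into weights $0$, $1$ and $\ge2$. It uses induced $4$-vertex paths \emph{and} induced $3$-stars in the dual as hexagons (Lemma~\ref{lem:T}) to show that empty faces cluster only as singletons, edges, $2$-paths or $4$-cycles. It then bounds how weight-one faces can meet these clusters, which gives \eqref{eq:incidence}.

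Even this is not enough by itself. Suppose every face either lies in an empty wheel around a degree-$4$ vertex or is a weight-two triangle bordering three such wheels. Then all hexagon constraints hold with exactly one blue point per two faces. Such wheels are controlled only geometrically, by the flip argument of Observation~\ref{obs:boundary} and the angle count at reflex vertices in Lemma~\ref{lem:cycle}.

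In effect the paper obtains only $W\ge\frac{46}{45}|S_1|-\frac{44}{45}\delta(T)-\frac65$, about $23/45$ blue points per face, far from your $\alpha\approx 1$. The contradiction then comes from $W\le|S_2|\le|S_1|$. This requires color $1$ to be the \emph{majority} class and its hull to be small, with the large-hull case handled separately by Proposition~\ref{prop:lhull}. It does not come from an alternating iteration launched from an Erd\H{o}s--Szekeres polygon.

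\paragraph{The supporting steps also break down.}
\begin{itemize}
\item Step 2: the dual of a triangulation with interior points is not a tree. Four consecutive faces along a dual path need not bound a simple hexagon. They can wrap around a degree-$4$ vertex, giving a quadrilateral with a red point inside, or close up as in Figure~\ref{fig:graphp}(d). You need the path or star to be induced and the triangulation to be $4$-connected.
\item Step 3: a point set need not admit a $4$-connected triangulation. Recursing into separating triangles gives no control, because the pieces can be tiny and each carries its own hull deficit, so the global count is lost. Also, with a non-triangular hull, $4$-connectivity forbids hull chords, not just separating triangles. The paper gets existence from the characterization in \cite{DGR}. It checks that condition via $|P|-|V(CH(P))|>|V(CH(S_1))|$, obtained by applying Proposition~\ref{prop:lhull} to $S_1$ and to its interior points (Lemma~\ref{lem:S}).
\item Step 5 would need the same existence argument for the blue class.
\item Step 1: minimizing only over red polygons would not bound the hull of the blue points inside, which is what minimality is for in Theorem~\ref{thm:c2}.
\item The explicit constant does not come from Erd\H{o}s--Szekeres, which this proof never uses. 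We have $306447=2\cdot153224-1$, and $153224$ is the size at which Proposition~\ref{prop:lhull} (with $t=174$) and Proposition~\ref{prop:shull} become incompatible for the majority class.
\end{itemize}
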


Theorem \ref{thm:hexagon} shows that every sufficiently large 2-colored planar point set in general position contains a monochromatic hexagon with at most one interior point. The proof is given in Section \ref{sec:hexagonpf}. We provide a high-level outline of the proof below. Throughout, $S = S_1 \cup S_2$ will be a 2-colored point set, where $S_1$ and $S_2$ denote the sets of points of color $1$ and $2$, respectively. Assume, without loss of generality, $|S_1| \geq |S_2|$, that is, 1 is the more frequent color.

\begin{itemize}  

\item The first step establishes that if the convex hull of $S_1$ has sufficiently many vertices, then $S$ contains a monochromatic hexagon with at most one interior point (Proposition~\ref{prop:lhull}). To prove this, we adapt an argument of \citet{aichholzer2010large}, used to establish the existence of empty monochromatic quadrilaterals in bichromatic point sets, in our context. The argument proceeds by iteratively constructing nested convex hulls, alternating between the two colors. At each stage, we construct many pairwise interior-disjoint monochromatic hexagons containing no points of their own color. If the desired hexagon does not exist, each of these hexagons must contain at least two points of the opposite color. Iterating this requirement forces the nested point sets to remain large. These lower bounds become eventually incompatible with the deficit in the hull vertices, giving the required contradiction.

\item The small-hull case is more delicate. Here, the starting point is a $4$-connected triangulation of $S_1$. First, invoking a result of \citet{DGR}, we show that such a triangulation always exists, whenever $S_1$ is large enough and $S$ does not contain a monochromatic hexagon with at most one interior point. The existence of such a triangulation ensures that four triangular faces joined in a chain or around a central face form a simple hexagon of color 1 with no point of color 1 inside. Then we weight each face of the triangulation by the number of points of color 2 in its interior. Every such group of four faces which induce a tree in the dual graph must have total weight at least two, if the desired hexagon does not exist.  This restricts how empty faces can cluster together and meet faces containing only one point of color 2. Counting these adjacencies relates the number of empty faces to the available color 2 points and the hull boundary. The remaining exceptional clusters, consisting of four empty faces surrounding a vertex, are controlled by a local argument using edge flips and angle constraints. Together, these bounds contradict the small-hull assumption (see Proposition~\ref{prop:shull}).
\end{itemize}

Determining the exact value of $\lambda_6^{\star}(2)$, and more generally of $\lambda_r^{\star}(2)$ for $r\geq5$, remains open. No positive lower bounds are currently known for these quantities. Indeed, it is possible that $\lambda_r^{\star}(2)=0$ for every fixed $r\geq3$, that is, every sufficiently large $2$-colored planar point set in general position contains a monochromatic empty $r$-gon, not necessarily convex. The following result establishes this assertion for Horton sets. Since Horton sets are a principal source of counterexamples in such problems, this provides evidence in favor of the possibility that $\lambda_r^{\star}(2)=0$. The proof is given in Appendix~\ref{sec:Hpf} (see also Section~\ref{sec:conclusions}
for related discussion).

\begin{proposition}\label{prop:H}
For any fixed integer $r\geq3$, every $2$-colored Horton set with at least $4^{r-1}$ points contains a monochromatic empty $r$-gon.
\end{proposition}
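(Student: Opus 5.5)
We argue by induction on $r$, using the recursive structure of Horton sets. Recall that a Horton set $H$ of size $2^k$ splits, after sorting by $x$-coordinate, into the even-indexed part $H_0$ and the odd-indexed part $H_1$. Both are Horton sets, and $H_1$ lies high above $H_0$: every line through two points of $H_0$ passes below all of $H_1$, and every line through two points of $H_1$ passes above all of $H_0$. For sizes that are not powers of two we work with the standard generalized Horton sets, which have the same recursive decomposition. The target bound $4^{r-1}$ suggests that each step in $r$ should cost a factor of $4$, that is, two levels of the Horton recursion.

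\textbf{Strengthened statement.} We prove the following claim by induction on $r$: every $2$-colored Horton set $H$ with at least $4^{r-1}$ points contains a monochromatic empty $r$-gon $P$ such that $P$ has an edge $uv$ lying on the boundary of $\mathrm{conv}(H)$. We also want $uv$ to lie on the lower hull (or, symmetrically, on the upper hull), so that it can be extended. If orientation becomes a problem we will carry both versions, lower and upper.

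\textbf{Base case.} For $r=3$ we need $4^{2}=16$ points. One can argue directly from hull edges of the two color classes. Alternatively, observe that any three consecutive points of a Horton set in $x$-order span a triangle that is empty. Since the points of the set are consecutive, any three consecutive points of the same color give an empty triangle whose interior can contain only points of the other color. We must handle this carefully, so we will instead use monochromatic pairs inside small sub-Horton sets.

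\textbf{Inductive step.} Split $H$ into the four sub-Horton sets $H_{00}, H_{01}, H_{10}, H_{11}$, each of size at least $4^{r-2}$. Apply the hypothesis inside a suitable quarter, say $H_{00}$, to get a monochromatic empty $(r-1)$-gon $P'$ of color $1$ with a lower-hull edge $uv$. We then try to attach one new vertex $w$ of color $1$ across the edge $uv$, replacing $uv$ by the path $u\,w\,v$, so that the new triangle $uvw$ contains no point of $H$.

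The geometric input is the "high above / deep below" property. Points of the other half see a whole quarter as essentially collinear from far away. So a triangle formed by an edge on the hull of one quarter and a suitably chosen apex in an adjacent quarter intersects the remaining points only in a controlled set. Choosing $w$ as an extreme point in angular order around $u$ makes $uvw$ empty.

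If no point of color $1$ is available in the right quarter, that quarter is monochromatic of color $2$. In that case we find an empty $r$-gon of color $2$ directly inside it: for example, a polygonization of consecutive points yields an $r$-gon whose interior is empty, because all points in the region are of color $2$ and belong to the polygon's own vertex set. Concretely, an $x$-monotone polygon through consecutive points of a Horton set can be made empty. We also have to verify that the new edges $uw$ and $wv$ keep the polygon simple, and that one of them is again a hull edge, so that the induction continues.

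\textbf{Main obstacle.} The delicate step is the emptiness of the attached triangle $uvw$ together with preserving the hull-edge invariant. I would first try to prove a lemma of the following form: for a Horton set $A$ lying below $B$, and any lower-hull edge $uv$ of $A$, the point $w\in B$ that first appears when rotating a ray about $u$ gives an empty triangle $uvw$. The next step is to show that among the relevant candidates $w$, some have the needed color unless an entire sub-Horton set is monochromatic, in which case that sub-Horton set directly yields the required polygon.
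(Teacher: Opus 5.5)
There is a genuine gap: the strengthened hypothesis you want to carry through the induction is false, and it fails for the same reason that your inductive step cannot be closed. By general position, an edge of $P$ lying on the boundary of $CH(H)$ joins two consecutive vertices of $V(CH(H))$, and the adversary chooses their colors. Take a Horton set with $2^k$ points. Its lower hull is the lower hull of $H^-$ followed by $p_n$, and its upper hull is $p_1$ followed by the upper hull of $H^+$. So each chain has $k+1$ vertices and $CH(H)$ has $2k$ vertices. Color these alternately around the boundary. Then no monochromatic polygon has an edge on the lower hull, on the upper hull, or anywhere on $\partial CH(H)$. Your claim therefore already fails for $r=3$ and $|H|=16$, and for every $r$ at $|H|=4^{r-1}$.

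Relativizing the hull to a sub-Horton set does not rescue the step either. Suppose you glue $w\in D^+$ onto an edge $uv$ of a polygon lying in $D^-$. The orientation rule (every point of $D^-$ lies below every line through two points of $D^+$) shows that $uw$ or $wv$ can be a hull edge of $D$ only when $w$ is the leftmost or rightmost point of $D^+$. The colors of those two points are again adversarial. Two further problems: the base case is left open, and attaching across a lower-hull edge of the bottom quarter is impossible, since every other point of $H$ lies above every line spanned by two of its points.

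The missing idea is to replace hull edges by a notion that is plentiful and independent of the coloring. Call $p_u,p_v$ with $u<v$ \emph{upper-visible} if every point of $H$ strictly between them in $x$-order lies below the line $p_up_v$. Consecutive points of every iterated upper child $H^{+},H^{++},\ldots$ are upper-visible.

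The paper starts from two consecutive points of the $(2L-2)$-fold iterated upper child and descends one level at a time. At each level it takes the unique point of the next level lying between the current endpoints. According to its color, this point joins either a left chain ending at the left endpoint or a right chain starting at the right endpoint. At most one restart is needed, when both endpoints share a color different from that of the new point. This produces, in every $2$-colored Horton set with $2^{2L-1}$ points, a monochromatic $x$-increasing chain of $L$ points whose consecutive edges are upper-visible.

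Apply this to $H^-$ with $L=r-1$. A triangle with base vertices $p_u,p_v\in H^-$ and apex in $H^+$ contains exactly the points of $H^-$ between $p_u$ and $p_v$ that lie above $p_up_v$. Hence \emph{every} point of $H^+$ of the chain's color is the apex of an empty fan $r$-gon. No extremality of the apex is required, and that is why the single color choice succeeds. If $H^+$ has no point of that color, it is monochromatic, and $r$ consecutive points of $H^+$ span an empty polygon. This is the fallback you correctly anticipated.
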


\subsection{Three or More Colors}

For $c \geq 3$, we establish the following bounds on $\lambda_r^{\star}(c)$.\footnote{A preliminary version of some of the results in this section appeared
in the proceedings of the {\it Conference on Algorithms and Discrete Applied Mathematics} (CALDAM), 77--87, 2025.  }

\begin{theorem}\label{thm:r}  For all integers $r \geq 3$ and $c \geq 3$, 
\begin{align*}
(r - 2) \left \lfloor \frac{c-1}{2} \right \rfloor \leq  \lambda_r^{\star}(c) \leq  (r-2)(c-2)+\left\lfloor(r-2)\delta_c\right\rfloor  , 
\end{align*}
where $\delta_c:=\frac{2}{2c-3+\sqrt{(2c-3)^2+8}} \in (0, \frac{1}{2}]$.  
\end{theorem}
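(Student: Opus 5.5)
We treat the two inequalities separately: the lower bound by a colored Horton-set construction, and the upper bound by nested convex hulls combined with spiral polygonizations, as in the proof of Theorem~\ref{thm:c2}.

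\emph{Lower bound.} We generalize the construction of \citet{colorempty} for triangles and of \citet{liu2018almost} for quadrilaterals. Set $k=\lfloor\frac{c-1}{2}\rfloor$; it suffices to treat $c=2k+1$, since extra colors can go unused. Take a large Horton set $H$, which has no $7$-hole. Replace each point by a tiny cluster: one point of color $1$ and, for each $j=1,\dots,k$, a pair of points of colors $2j$ and $2j+1$, placed so that colors pair up as in the Devillers et al.\ construction with no monochromatic empty triangle.

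The key claim is that every monochromatic triangle, after triangulating any monochromatic $r$-gon into $r-2$ triangles, contains in its interior at least $k+1$ points. A monochromatic polygon on $r$ vertices has interior area covered by $r-2$ triangles, each with interior points coming from clusters of the other colors. We need these counts to add up without double counting. The plan is to choose a triangulation of the polygon using diagonals, so the triangles are interior-disjoint, and to show each triangle's interior contains at least $k+1$ points not lying on the polygon boundary. This gives more than $(r-2)k$ interior points, so $\lambda_r^\star(c)\ge (r-2)k$. The delicate point is points that lie near diagonals. Making clusters extremely small and using general position should ensure each triangle captures a full ``witness'' configuration strictly inside it.

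\emph{Upper bound.} We induct on $c$, mirroring the argument for $\lambda_3(c)\le c-2$. Let $S$ be a huge $c$-colored set, and let $P$ be a large monochromatic convex polygon in the most frequent color, obtained from Erd\H{o}s--Szekeres. Consider the color-$1$ points inside $P$ and take a spiral polygonization in the sense of \citet{iwerks2011spiral}. Group consecutive triangles of the spiral triangulation into simple $r$-gons, $r-2$ triangles each, that are pairwise interior-disjoint and contain no color-$1$ points. If none of them has at most $s=(r-2)(c-2)+\lfloor (r-2)\delta_c\rfloor$ interior points, then each contains at least $s+1$ points of the other $c-1$ colors. Averaging then forces some other color to have roughly $\frac{s+1}{c-1}$ points per polygon. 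Recursing on that color inside a nested hull, and on its own nested hulls, produces lower bounds on successive layer sizes, as in Theorem~\ref{thm:c2}. A linear recurrence of the form $n_{i+1}\ge \alpha n_i - O(h_i)$ must eventually exceed the available points, contradicting the hull boundary losses.

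\emph{Main obstacle.} The hardest step is optimizing the recurrence to obtain exactly the correction term $\lfloor (r-2)\delta_c\rfloor$. The value $\delta_c$ is the positive root of $2\delta^2+(2c-3)\delta-1=0$, which suggests a two-step recurrence balancing two colors' densities. I would set up weighted counts $x$ (the density of the current color) and $y$ (the density passed to the next color), require $x(c-2+\delta)\ge\ldots$, and solve for the fixed point. The first attempt is to track the pair (current color, strongest competitor) over two rounds and show the growth factor exceeds $1$ precisely when $s\ge (r-2)(c-2+\delta_c)$.
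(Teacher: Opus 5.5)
\textbf{Lower bound.} This part has a genuine gap, and its key claim is false as stated. First, your threshold is off by one. $\lambda_r^\star(c)\ge(r-2)k$ means $\mathsf{M}_r^\star(c,(r-2)k-1)=\infty$, so you need at least $(r-2)k$ interior points per monochromatic $r$-gon, i.e.\ $k$ per triangle, not $k+1$. Second, a bound of $k+1$ per monochromatic triangle cannot hold for any construction: for $c=3$ it would give $\lambda_3(3)\ge2$, contradicting $\lambda_3(3)\le c-2=1$ from \eqref{eq:almostempty3} (and, for every $r$, the upper bound of this very theorem). Third, the cluster construction is never specified, and nothing in it forces interior points. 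Over an empty Horton triangle (there are many), the only candidates are the other points of the three vertex clusters. These count only if they fall into the angular cones at the vertices, and those cones can be very narrow.

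The paper uses no clusters. It colors a Horton set cyclically in $x$-order with $c=2q+1$ colors, so every recursive subset is colored periodically with period $c$. Take a monochromatic triangle $uvw$ with $u,v$ in a common child of the deepest recursive subset containing all three vertices, and let $D$ be the deepest recursive subset containing $u$ and $v$. Their positions in $D$ differ by some $t$ that is a multiple of $c$, and odd because $u,v$ lie in different children of $D$; hence $t\ge c$. The $(t-1)/2\ge q$ points of $D$ between them on $w$'s side of $uv$ then lie inside $uvw$ by the high-above property. Triangulating a monochromatic $r$-gon by diagonals gives $(r-2)q$ interior points.

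\textbf{Upper bound.} Here you have a plan rather than a proof: the derivation of $\delta_c$ is left open, and the ``induction on $c$'' is never used. Moreover, the natural one-level implementation of your scheme does not reach the stated bound. That implementation mirrors Theorem~\ref{thm:c2}: apply the full bound of Lemma~\ref{lem:r} to the color-$1$ points and then to the other color classes. (Picking the largest class by averaging is equivalent to summing over classes, since these inequalities are linear in $|T_a|$.) It yields a contradiction only when $s+1>(c-\frac32)(r-2)$, i.e.\ the bound with $\delta_c$ replaced by $\frac12$; for $r=4$ that is $2c-3$ rather than $2c-4$.

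The missing ingredient is an inequality with no $h_1$ term; no multi-round recurrence is needed. Take $T$ to be a color-$1$ convex $h_1$-gon with fewest interior points, so each $T_a=CH(T)\cap S_a$, $a\ge2$, has at most $h_1-1$ hull vertices. For every $a\ge2$, use only the first summand of Lemma~\ref{lem:r}:
\[
(r-2)\Bigl(i_1+\sum_{b\ge2}|T_b|-|T_a|\Bigr)\ge(s_c+1)\bigl(|T_a|-r+1\bigr).
\]
Sum this over $a\ge2$, and likewise sum the full-bound inequalities for $a\ge2$, where the hull bound $h_1-1$ enters. Add these two sums, with weights $1-2\delta_c$ and $\delta_c$, to $(c-1)\delta_c$ times the color-$1$ inequality. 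Then $h_1$ cancels, and the coefficient of $\sum_{b\ge2}|T_b|$ is $s_c+1-(r-2)(c-2+\delta_c)>0$. Precisely because $2\delta_c^2+(2c-3)\delta_c=1$, the coefficient of $i_1$ is a nonnegative multiple of it. So $\sum_{b\ge2}|T_b|$ is bounded by a constant, and the color-$1$ inequality then bounds $h_1$, contradicting its choice.

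Thus $\delta_c$ is the optimal weight in a single-level linear combination, not the fixed point of a two-round recurrence. For $c=2$ one has $\delta_2=\frac12$, so the $h_1$-free inequality gets weight $1-2\delta_2=0$; this is why Theorem~\ref{thm:c2} never needed it.
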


The proof of Theorem~\ref{thm:r} is given in Section \ref{sec:rpf}. The upper bound builds on the spiral polygonization and nested convex-hull approach used in Theorem~\ref{thm:c2}. In the bichromatic case, constructing interior-disjoint $r$-gons of one color forces points of the opposite color inside them, allowing a direct comparison between the two color classes. With more colors, these interior points may be distributed among several classes. The additional ingredient is to apply the bounds on the number of interior-disjoint $r$-gons to every remaining color class and combine the resulting inequalities to obtain a contradiction. The lower bound follows from a colored Horton-set construction. The proof of the upper bound also applies when $c=2$. Note that substituting $c=2$ recovers the bound $\lambda_r^\star(2)\leq\lfloor \frac{r-2}{2}\rfloor$ in Theorem~\ref{thm:c2}, since $\delta_2=\frac{1}{2}$. We presented the bichromatic case first, with a direct proof to emphasize the result and illustrate the main ideas in a simpler setting.

\begin{remark}
For the first few values of $r$, Theorem~\ref{thm:r} has the following consequences:
\begin{itemize}

\item For $r=3$, Theorem~\ref{thm:r} recovers the bounds in \eqref{eq:almostempty3}.

\item For $r=4$ and $c\geq3$, Theorem~\ref{thm:r} gives
\begin{equation}\label{eq:ae4}
2\left\lfloor\frac{c-1}{2}\right\rfloor \leq \lambda_4^\star(c)\leq 2c-4,
\end{equation}
improving the upper bound in \eqref{eq:almostempty4} by one. Interestingly, although not covered by our results, the upper bound in \eqref{eq:ae4} also holds for $c=2$, since Aichholzer et al.~\cite{aichholzer2010large} proved that every sufficiently large bichromatic planar point set in general position contains an empty monochromatic quadrilateral, not necessarily convex. Hence, $\lambda_4^\star(2)=0$. Also, for $c=3$, the bounds in \eqref{eq:ae4} coincide to give $\lambda_4^\star(3)=2$. 

\item For $r=5$ and $c\geq3$, Theorem~\ref{thm:r} gives, 
\begin{equation*} 
3\left\lfloor\frac{c-1}{2}\right\rfloor \leq \lambda_5^\star(c)\leq 3c-6.
\end{equation*}
In particular, the bounds coincide for $c=3$, giving $\lambda_5^\star(3)=3$.
Thus, every sufficiently large $3$-colored planar point set in general position contains a monochromatic pentagon, not necessarily convex, with at most three interior points.
Further, the bound of three interior points cannot be reduced to two.  

\end{itemize}
\end{remark}

\section{Proof of Theorem \ref{thm:c2} 
\label{sec:thm1pf}}

Let $S$ be a set of points in the plane in general position. We denote the convex hull of $S$ by $CH(S)$ and the vertex set of the convex hull by $V(CH(S))$. A \emph{polygonization} of $S$ is a simple polygon whose vertex set is $S$. A triangulation of a polygon is said to be {\it serpentine} if its dual graph is a path.\footnote{In a dual graph of a triangulation $T$, the vertices are the bounded triangular faces of $T$ and two triangles are adjacent in the dual if they share an interior edge.}
A polygonization is called a \emph{spiral} if its boundary consists of exactly one chain of convex vertices and at most one chain of reflex
(nonconvex) vertices (see Figure~\ref{fig:s}). \citet{iwerks2011spiral} showed that every planar set of $n$ points in general position has a spiral polygonization that admits a serpentine triangulation, which can be computed in $O(n\log n)$ time. The following lemma is a consequence of this result:

\begin{lemma}[\cite{iwerks2011spiral}]\label{lem:r}  
Fix $r \geq 3$ and suppose $S$ is a set of points in the plane in general position, with $|S| \geq 3$. Then $S$ contains at least 
\begin{align}\label{eq:rpoints}
\left\lfloor \frac{|S| - 2}{r - 2}\right\rfloor + \left\lfloor \frac{|S| - |V(CH(S))|}{r-2}  \right\rfloor 
\end{align} 
pairwise interior-disjoint empty $r$-gons.
\end{lemma}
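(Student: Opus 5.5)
Take the spiral polygonization $P$ of $S$ from \citet{iwerks2011spiral}, together with its serpentine triangulation $T$. Write $n=|S|$ and $h=|V(CH(S))|$. Since $P$ is a simple polygon with all $n$ points of $S$ as vertices, $T$ has $n-2$ triangles, and its dual graph is a path $t_1,t_2,\dots,t_{n-2}$. No triangle of $T$ contains a point of $S$ in its interior, because every point of $S$ is a vertex of $P$.

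\emph{First term.} Cut the path into consecutive blocks of $r-2$ triangles; there are $\lfloor (n-2)/(r-2)\rfloor$ such blocks. A block of $k$ consecutive triangles in a serpentine triangulation has a dual that is a subpath. It is therefore a triangulated simple polygon whose boundary has $k+2$ vertices, since each added triangle contributes exactly one new vertex. Because the polygon is triangulated by diagonals among its own vertices, no point of $S$ lies in its interior. So each block of $r-2$ triangles is an empty (not necessarily convex) simple $r$-gon. Blocks use disjoint sets of triangles, so the resulting $r$-gons are pairwise interior-disjoint.

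\emph{Second term.} Triangulate the pockets $CH(S)\setminus P$. Since $P$ is a spiral, it has a single reflex chain, and the region between $P$ and $CH(S)$ is a union of polygons whose vertices lie on $S$ and which contain no points of $S$ in their interiors. Together with $T$, this gives a triangulation of $S$, which has $2n-h-2$ triangles in total. The pockets therefore contribute $(2n-h-2)-(n-2)=n-h$ triangles. The next step is to group these $n-h$ pocket triangles into $\lfloor (n-h)/(r-2)\rfloor$ interior-disjoint simple $r$-gons, each made of $r-2$ triangles whose dual graph is connected. This again yields simple polygons with $r$ vertices and empty interiors. They lie outside $P$, so they are interior-disjoint from the first family. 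Adding the two counts gives \eqref{eq:rpoints}.

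\emph{Main obstacle.} The hard step is showing that the pocket triangles can be partitioned into connected groups of exactly $r-2$, up to the floor. The key fact needed is that any tree (here, the dual of a triangulated pocket, with maximum degree $3$) with $m$ nodes can be cut into $\lfloor m/(r-2)\rfloor$ disjoint connected pieces of size $r-2$, plus leftover nodes.

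In general this fails for trees: a star is the standard counterexample. For the spiral it should work, because the outer region is a single ``spiral corridor''. The reflex chain winds around, so the complement $CH(S)\setminus P$ is essentially one polygon, and its triangulation can itself be chosen serpentine. My first attempt would be to build that triangulation by repeatedly cutting off the convex hull edge that closes the spiral, walking along the reflex chain. This produces a path-like dual, and the path can then be chunked exactly as in the first term.

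If that argument gets messy, the fallback is to use the dual-tree partition lemma for trees of maximum degree $3$. That lemma greedily removes a subtree of size between $r-2$ and $2(r-2)$ and gives connected pieces of size exactly $r-2$ at a slight loss. In that case I would check whether the loss can be absorbed into the floor.
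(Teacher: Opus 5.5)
Your first summand is correct and matches the paper: you chunk the dual path of the Iwerks--Mitchell serpentine triangulation of $P$ into blocks of $r-2$ triangles. The gap is in the second summand. You flag it as the main obstacle but leave it unproved: neither ``the complement is essentially one polygon'' nor ``its triangulation can be chosen serpentine'' is established, and you need both. If $CH(S)\setminus P$ had several pockets, you would only get a sum of floors, which can be smaller than $\lfloor (n-h)/(r-2)\rfloor$. And an arbitrary triangulation of a pocket has a dual tree that need not split into connected pieces of size $r-2$.

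The paper's proof closes this gap with three facts.
(i) $P$ contains every hull edge except one, say $uv$. This follows from the spiral property.
\begin{itemize}
\item Each missing hull edge bounds a pocket whose angles at its two hull endpoints are convex.
\item The pocket therefore has a further convex vertex. That vertex is a point of $S$ interior to $CH(S)$, hence a reflex vertex of $P$.
\item Two pockets would then split the reflex vertices of $P$, separated by convex hull vertices, into two chains, which a spiral does not allow.
\end{itemize}
So the closure of $CH(S)\setminus P$ is a single simple polygon $\mathcal P^-$ on the $n-h$ interior points together with $u$ and $v$, and any triangulation of it has exactly $n-h$ triangles.
(ii) $\mathcal P^-$ is again a spiral, since convexity and reflexivity are interchanged at its non-hull vertices. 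This interchange argument implicitly uses that the reflex chain of $P$ abuts $u$ or $v$, as in Figure~\ref{fig:s}. Otherwise the convex vertices of $P$ on both sides of its reflex chain would give $\mathcal P^-$ two reflex chains.
(iii) Every spiral polygon admits a serpentine triangulation \cite[Lemma~4.1]{orourke1987artgallery}.
With (i)--(iii), the $n-h$ triangles of $\mathcal P^-$ are chunked along a dual path exactly as in your first summand.

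Your fallback does not rescue the count. For $k=r-2\geq 2$, no lemma splits general trees of maximum degree $3$ into connected pieces of size exactly $k$ with small loss. To see this, hang many gadgets off a long path, each gadget being a node carrying two pendant paths of $k-1$ nodes. Any piece of size $k$ that meets a pendant path must contain that gadget's central node. Hence at least $k-1$ of each gadget's $2k-1$ nodes stay uncovered, a loss linear in the number of triangles. The greedy procedure you describe only guarantees roughly $m/(2k-1)$ pieces. Moreover, the bound in the lemma is exact, so even an additive loss of one could not be absorbed into the floor.
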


\begin{proof}  
If $|S|=|V(CH(S))|$, triangulate the convex polygon $CH(S)$ by a fan and group consecutive
triangles into blocks of size $r-2$. Each block forms an empty $r$-gon and the blocks are pairwise interior-disjoint. This gives the required bound in \eqref{eq:rpoints}. 

Now, assume $|S| >  |V(CH(S))|$. Apply the construction in \cite{iwerks2011spiral} to obtain
a spiral polygonization $\mathcal P^+$ of $S$. Note that $\mathcal P^+$ contains all but one hull edge, and the closure of $CH(S)\setminus\mathcal P^+$ is a simple polygon $\mathcal P^-$ (as shown in Figure~\ref{fig:s}). The vertices of $\mathcal P^-$ are the $|S|-|V(CH(S))|$ interior points of $S$ together with the endpoints of the missing hull edge, that is, $\mathcal P^-$ has a total of $|S|-|V(CH(S))|+2$ vertices. Further, $\mathcal P^-$ is also a spiral: at each non-hull vertex, convexity and reflexivity are interchanged between the two polygons. It is known that every spiral polygon admits a serpentine triangulation \cite[Lemma~4.1]{orourke1987artgallery}. Choose such
triangulations of $\mathcal P^+$ and $\mathcal P^-$, and order their triangles along the corresponding dual paths. The two triangulations contain $|S|-2$ and $|S|-|V(CH(S))|$ triangles, respectively. A block of $r-2$ consecutive triangles along each of these triangulations forms a simple $r$-gon. Hence, discarding any leftover final blocks, the result in \eqref{eq:rpoints} follows.  
\end{proof}

\begin{figure}[htbp]
\centering
\begin{tikzpicture}[
  x=.0125cm,y=-.0125cm,
  boundary/.style={draw=black,line width=.65pt,line join=round},
  separator/.style={draw=black!75,line width=.9pt,
                    dash pattern=on 0pt off 3pt,line cap=round},
  spirallabel/.style={font=\Large,inner sep=1pt}
]
\foreach \i/\x/\y in {
   0/396/330, 1/329/302, 2/315/247, 3/410/233,
   4/464/275, 5/491/343, 6/261/491, 7/153/465,
   8/126/248, 9/207/141,10/423/58,11/613/167,
  12/667/329,13/586/519,14/369/573,15/572/329,
  16/505/221,17/342/140,18/234/248,19/288/370,
  20/498/595}
  \coordinate (v\i) at (\x,\y);

\fill[orange,fill opacity=.28]
  (v0)--(v1)--(v2)--(v3)--(v4)--(v5)--(v6)
  --(v7)--(v8)--(v9)--(v10)--(v11)--(v12)
  --(v13)--(v20)--(v14)--(v15)--(v16)--(v17)--(v18)
  --(v19)--cycle;

\fill[green!65!black,fill opacity=.22]
  (v7)--(v6)--(v5)--(v4)--(v3)--(v2)--(v1)
  --(v0)--(v19)--(v18)--(v17)--(v16)--(v15)
  --(v14)--cycle;

\fill[gray!30] (v3)--(v4)--(v5)--cycle;

\draw[boundary]
  (v0)--(v1)--(v2)--(v3)--(v4)--(v5)--(v6)
  --(v7)--(v8)--(v9)--(v10)--(v11)--(v12)
  --(v13)--(v20)--(v14)--(v15)--(v16)--(v17)--(v18)
  --(v19)--cycle;

\foreach \i/\j in {12/15,10/16,8/17,6/18,0/5}
  \draw[separator] (v\i)--(v\j);
\draw[separator] (v3)--(v5);

\foreach \i/\j in {2/18,4/17,5/15}
  \draw[separator] (v\i)--(v\j);
   \draw[separator] (v7)--(v14);

\foreach \i in {0,...,20}
  \fill[black] (v\i) circle[radius=1.8pt];
\node[spirallabel,text=orange!70!black] at (568,430)
  {$\mathcal{P}^{+}$};
\node[spirallabel,text=green!40!black] at (377,489)
  {$\mathcal{P}^{-}$};
\end{tikzpicture}
\caption{\small{The orange polygon $\mathcal{P}^{+}$ is a spiral polygonization. The complementary spiral polygon $\mathcal{P}^{-}$ is shown in green. The dotted diagonals mark pentagons, each formed by three consecutive triangles. The leftover triangle at the centre of the spiral is shown in grey.  }}
\label{fig:s}
\end{figure}

\begin{remark} 
Note that, when $r=3$, Lemma~\ref{lem:r} gives $2|S|-|V(CH(S))|-2$ pairwise interior-disjoint empty triangles. This agrees with the classical result that every triangulation of $S$ has precisely this many bounded triangular faces. For $r=4$, a point set $S$ admits a quadrangulation of $CH(S)$ if and only if $|V(CH(S))|$ is even
\cite[Theorem~3.1]{bose1997characterizing}.\footnote{A {\it polygonal partition} $\mathcal P(S)$ of a planar point set $S$ in general position is a partition of $CH(S)$ into simple polygons (faces) such that the vertex set of $\mathcal P (S)$ is $S$ and each edge of $\mathcal P(S)$ which is not an edge of $CH(S)$ is common to exactly two faces.  A polygonal partition $\mathcal P(S)$ of $S$ is said to be an $r$-{\it angulation} if  every face of $\mathcal P(S)$ is an $r$-gon (not necessarily convex).}  
If $|V(CH(S))|$ is odd, their construction instead gives a partition of $CH(S)$ into quadrilaterals and a single triangle.
In either case, the number of quadrilaterals is $|S|-\lceil \frac{|V(CH(S))|}{2} \rceil-1$. This agrees with the bound in \eqref{eq:rpoints} when $|S|$ is even or $|V(CH(S))|$ is odd, and exceeds it by one otherwise. More generally, Euler's formula and angle counting implies that an $r$-angulation
of $CH(S)$ with vertex set $S$, whenever one exists, has exactly
$$\frac{2|S|-|V(CH(S))|-2}{r-2}$$ bounded faces, in particular, this quotient must be an integer (see, for example, \cite[Section 3.2]{aichholzer2008matching}). 
Moreover,
\begin{equation}\label{eq:fc}
\left\lfloor
\frac{2|S|-|V(CH(S))|-2}{r-2}
\right\rfloor
\end{equation}
is an upper bound on the number of pairwise interior-disjoint
empty $r$-gons determined by $S$. The upper bound \eqref{eq:fc} exceeds
the lower bound in \eqref{eq:rpoints} by at most one. Related existence results for $r$-angulations with a possibly nonconvex outer boundary are obtained in \cite{payne2014angulation}, however, these do not directly establish the attainability of \eqref{eq:fc}. For $r\geq5$, an issue that requires care is that grouping $r-2$ adjacent triangles in an arbitrary manner may surround an interior vertex or otherwise fail to be a simple $r$-gon. The spiral serpentine polygonization of \citet{iwerks2011spiral} avoids this difficulty and gives the number of $r$-gons in Lemma~\ref{lem:r}. It would be interesting to determine whether, for each fixed $r\geq5$ and all sufficiently large point sets, the bound in \eqref{eq:rpoints} can be improved to \eqref{eq:fc}. For our applications, however, an additive difference of at most one is immaterial, and \eqref{eq:rpoints} suffices.  
\end{remark}

We now proceed with the proof of Theorem \ref{thm:c2}. Fix an integer $r\geq 3$ and for notational convenience denote 
$$ s:=\left\lfloor \frac{r-2}{2}\right\rfloor $$
and $h_1:=4r^2$. We will show that $\mathsf{M}_r^\star(2,s)\leq 2\mathsf{ES}(h_1)$.
Suppose, for a contradiction, that there exists a set $S$ of at least 
$2\mathsf{ES}(h_1)$ points in general position, colored with two colors, such that every monochromatic $r$-gon contains at least $s+1$ interior points. One of the two color classes has size at least $\mathsf{ES}(h_1)$. Hence, by the Erd\H{o}s--Szekeres theorem, $S$ contains a monochromatic convex $h_1$-gon. Among all monochromatic convex $h_1$-gons in $S$, choose one, say $T$, with the minimum number of points of $S$ in its interior. Without loss of generality, suppose the color of $T$ is $1$.
Define
$$ T_1:=CH(T)\cap S_1, $$ 
where $S_1$ is the set of points with color 1. Note that $|V(CH(T_1))| = h_1$ and $|T_1| = h_1+i_1$, where $i_1$ is the number of points of color 1 inside $T$. Hence, by Lemma \ref{lem:r}, $T_1$ contains at least
$$\left\lfloor \frac{|T_1|-2}{r-2}\right\rfloor+ \left\lfloor \frac{|T_1|-|V(CH(T_1))|}{r-2}\right\rfloor = \left\lfloor \frac{h_1+i_1-2}{r-2}\right\rfloor+ \left\lfloor \frac{i_1}{r-2}\right\rfloor $$
pairwise interior-disjoint $r$-gons with vertices of color $1$ which have no points of color 1 in their interiors. Moreover, by assumption, each of these $r$-gons contains at least $s + 1 $ interior points (all of which have color 2). Hence, $CH(T_1)$ has at least
$$(s+1)\Bigg(\left\lfloor \frac{h_1+i_1-2}{r-2}\right\rfloor+ \left\lfloor \frac{i_1}{r-2}\right\rfloor \Bigg)\geq \frac{(s+1)(2i_1+h_1-2r+2)}{r-2}. $$
interior points with color 2. Let $T_2:=CH(T_1)\cap S_2=CH(T)\cap S_2$. Note that  
\begin{align}\label{eq:t2c2}
t_2:=|T_2|\geq
\frac{(s+1)(2i_1+h_1-2r+2)}{r-2}.
\end{align}
By the minimality of $T$, $|V(CH(T_2))|<h_1$. Hence, by Lemma \ref{lem:r}, $T_2$ contains at least
$$\left\lfloor \frac{t_2-2}{r-2}\right\rfloor+\left\lfloor \frac{t_2-|V(CH(T_2))|}{r-2}\right\rfloor  \geq  \left\lfloor \frac{t_2-2}{r-2}\right\rfloor+ \left\lfloor \frac{t_2-h_1+1}{r-2}\right\rfloor  $$
pairwise interior-disjoint empty $r$-gons of color $2$ which have no points of color 2 in their interiors. By assumption each of these color-$2$ $r$-gons contains at least $s+1$ interior points, all of color $1$.  Therefore,  
\begin{align}  
 |CH(T_2) \cap S_1| & \geq (s + 1 )\Bigg(\left\lfloor \frac{t_2-2}{r-2}\right\rfloor+ \left\lfloor \frac{t_2-h_1+1}{r-2}\right\rfloor \Bigg) \nonumber \\ 
& \geq (s + 1 )\frac{(2 t_2 - h_1 - 2r + 2)}{r-2}  \nonumber \\
& \geq \frac{s+1}{r-2} \left[ \frac{2(s+1)(2i_1+h_1-2r+2)}{r-2} -h_1-2r+2 \right]  ,  
\label{eq:t2r}  
\end{align} 
where the last step follows from \eqref{eq:t2c2}.

Now, we consider the following 2 cases, depending on $r$ is even or odd. 
\begin{itemize} 
\item $r=2 \ell$ is even, for $\ell \geq 2$. Then $s=\ell-1$ and \eqref{eq:t2r} becomes,  
\begin{align*}
|CH(T_2)\cap S_1|
&\ge
\frac{\ell^2}{(\ell-1)^2}\,i_1
+
\frac{\ell}{2(\ell-1)^2}\,h_1
-
\frac{\ell(4\ell^2-4\ell+1)}{(\ell-1)^2} \nonumber \\
& \geq i_1+ 1 ,
\end{align*}  
since $\frac{\ell^2}{(\ell-1)^2}\geq 1$ and $h_1=4r^2=16\ell^2$ gives $\frac{\ell}{2(\ell-1)^2}h_1-
\frac{\ell(4\ell^2-4\ell+1)}{(\ell-1)^2}
\geq 1$ for $\ell \geq 2$. This is a contradiction, since $|CH(T_2) \cap S_1| \leq i_1$.  

\item $r=2\ell+1$ is odd, where $\ell\geq 1$. Then $s=\ell-1$  and \eqref{eq:t2r} becomes 
\begin{align*}
|CH(T_2)\cap S_1| &\ge \frac{4\ell^2}{(2\ell-1)^2}\,i_1 + \frac{\ell}{(2\ell-1)^2}\,h_1 -
\frac{ 4 \ell^2(4 \ell -1 )}{(2\ell-1)^2}.
\end{align*}
Since $\frac{4\ell^2}{(2\ell-1)^2}\geq1$ and $h_1=4(2\ell+1)^2$, the RHS above is at least $i_1+1$, which contradicts $|CH(T_2)\cap S_1|\leq i_1$.
\end{itemize}

Since both the above cases lead to a contradiction, it follows that any set $S$, with $|S| \geq 2 \mathsf{ES}(h_1)$, colored with 2 colors contains a monochromatic $r$-gon with at most $s = \left \lfloor \frac{r-2}{2} \right \rfloor$ interior points. This implies that $\mathsf{M}_r^{\star}(2, s) \leq 2 \mathsf{ES}(h_1)$ and, hence, $\lambda_r^\star(2) \leq s$. This completes the proof of Theorem \ref{thm:c2}. \hfill $\Box$

\section{Proof of Theorem \ref{thm:hexagon}} 
\label{sec:hexagonpf}

Let $S$ be a set of points in the plane in general position colored with two colors, which we will refer to as 1 and 2.  Denote by $S_1$ and $S_2$ the subsets of points $S$ with color 1 and color 2, respectively. Without loss of generality, assume $|S_1| \geq |S_2|$.  Define 
\begin{align}\label{eq:convexpoints}
S_2' = CH(S_1) \cap S_2  \quad \text{ and } \quad S_1' = CH(S_2') \cap S_1  .   
\end{align}  
Here, $S_2'$ is the set of points of color 2 lying in the interior of $CH(S_1)$ and $S_1'$ is the set of points of color 1 lying in the interior of $CH(S_2')$ (see Figure \ref{fig:S}).  The main ingredients of the proof of Theorem \ref{thm:hexagon} are the following two propositions.

\begin{figure}[htbp]
\centering
\begin{tikzpicture}[x=1.25cm,y=1.25cm,font=\small]
\colorlet{hullred}{red!75!black}
\colorlet{hullblue}{blue!70!black}

\filldraw[fill=red!3,draw=hullred,line width=1.1pt]
  (-2.70,-1.50)--(.60,-2.00)--(2.80,-.55)
  --(2.40,1.90)--(-.30,2.70)--(-2.65,1.30)--cycle;

\filldraw[fill=blue!7,draw=hullblue,line width=1pt]
  (-1.30,-.65)--(.40,-1.05)--(1.50,.25)
  --(.65,1.55)--(-1.15,1.05)--cycle;

\filldraw[fill=red!8,draw=hullred,line width=1pt]
  (-.60,.05)--(-.10,-.55)--(.75,.10)--(.08,.84)--cycle;

\foreach \x/\y in {
  -2.70/-1.50,.60/-2.00,2.80/-.55,
  2.40/1.90,-.30/2.70,-2.65/1.30,
  -1.90/-.75,-1.70/.90,-.70/2.05,
  1.65/1.15,1.95/-.60,.10/-1.50}
  \fill[hullred] (\x,\y) circle[radius=.04];

\foreach \x/\y in {
  -.60/.05,.08/.84,.75/.10,-.10/-.55,-.12/.04,.01/.41}{
  \fill[hullred] (\x,\y) circle[radius=.04];
}

\foreach \x/\y in {
  -1.30/-.65,.40/-1.05,1.50/.25,.65/1.55,-1.15/1.05,
  -.50/.60,.32/.22,.12/-.20}{
  \fill[hullblue] (\x,\y) circle[radius=.04];
}

\foreach \x/\y in {-3.10/.20,-1.30/-2.20,3.20/.65,1.35/2.55}
  \fill[hullblue] (\x,\y) circle[radius=.04];

\node[text=hullred] at (-1.85,2.40)
  {$CH(S_1)$};
\node[text=hullblue] at (.35,1.87)
  {$CH(S_2')$};
\node[text=hullred] (innerlabel) at (-.745, -0.425)
  {$CH(S_1')$};

%
%
%
%
\end{tikzpicture}
\caption{ \small{Construction of $S_2'$ and $S_1'$ (recall \eqref{eq:convexpoints}).  }}  
\label{fig:S}  
\end{figure}

\begin{proposition}\label{prop:lhull}
If, for some integer  $t \geq 2$, 
$$|V(CH(S_1))| \geq \frac{2}{t+1} |S_1| + 10 (t-1),$$
then $S$ contains a monochromatic hexagon with at most one interior point.
\end{proposition}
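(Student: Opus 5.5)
We argue by contradiction, following the nested-hull scheme of the proof of Theorem~\ref{thm:c2} with $r=6$. Suppose $S$ contains no monochromatic hexagon with at most one interior point. Then every hexagon spanned by points of one color that contains no point of its own color must contain at least two points of the other color. Write $n_1=|S_1|$ and $h=|V(CH(S_1))|$.

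\textbf{Step 1 (color 1 to color 2).} Apply Lemma~\ref{lem:r} with $r=6$ to $S_1$. This gives at least
\[
\left\lfloor \tfrac{n_1-2}{4}\right\rfloor+\left\lfloor \tfrac{n_1-h}{4}\right\rfloor \ge \tfrac{2n_1-h-8}{4}
\]
pairwise interior-disjoint color-1 hexagons, none containing a point of color 1. Each contains at least two points of color 2 inside $CH(S_1)$, so
\[
|S_2'|\ge \tfrac{2n_1-h-8}{2}.
\]

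\textbf{Step 2 (color 2 back to color 1).} Apply Lemma~\ref{lem:r} to $S_2'$, writing $h'=|V(CH(S_2'))|$. We obtain at least $\frac{2|S_2'|-h'-8}{4}$ interior-disjoint color-2 hexagons, each containing at least two points of color 1. All these points lie in $CH(S_2')\cap S_1=S_1'$, which is contained in the interior points of $S_1$. Hence
\[
n_1-h\ \ge\ |S_1'|\ \ge\ \tfrac{2|S_2'|-h'-8}{2}\ \ge\ \tfrac{2n_1-h-8-h'-8}{2}.
\]
This rearranges to $h'\ge h-16$, up to floor corrections. So the hull of the inner color-2 layer is almost as large as $h$, which is itself large by hypothesis.

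\textbf{Step 3 (iterate).} We want to combine the loss of hull vertices with the inequalities above across $t$ rounds. Set $A_0=S_1$, $A_1=S_2'$, $A_2=S_1'$, $A_3=CH(S_1')\cap S_2$, and so on, alternating colors. Put $a_j=|A_j|$ and $h_j=|V(CH(A_j))|$. The two steps give, up to an additive constant $c$ (about $4$, absorbing the floors),
\[
a_{j+1}\ge a_j-\tfrac{h_j}{2}-c.
\]
Two more facts hold. First, $a_{j+2}\le a_j-h_j$, since the hull vertices of $A_j$ are not interior to $CH(A_{j+1})$. Second, the $A_j$ of a given color are disjoint from the hull vertices of earlier layers of that color. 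Chaining these two relations over $t$ layers of color 1 gives an inequality $\sum_j h_{2j}\lesssim \ldots$. The hull of each new layer cannot exceed the available points. The aim is to show that the per-step decrease of roughly $h_j/2$, starting from $h_0\ge \frac{2}{t+1}n_1+10(t-1)$, uses up more than $n_1$ points within about $t$ alternations. The hypothesis is presumably tuned so that $(t+1)\cdot\frac{h}{2}>n_1$ after the accumulated constants $10(t-1)$ are subtracted.

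\textbf{Main obstacle.} The hard step is controlling $h_j$ for $j\ge1$. The single-round bound $h'\ge h-16$ used only $|S_1'|\le n_1-h$. To iterate, we need to show that each successive hull stays at least about $h-10j$. The same two-step computation applied to $(A_j,A_{j+1},A_{j+2})$ should give $h_{j+1}\ge h_j-O(1)$, but each pass loses a constant, and that is exactly what the $10(t-1)$ term pays for. I would first try to prove this monotonicity-with-loss lemma cleanly, for instance $h_{j+1}\ge h_j-10$ when floors are handled carefully. I would then sum $a_{2k}\le n_1-\sum_{i<k}h_{2i}$ against the lower bounds to force a negative count after $t$ rounds. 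If the constant $10$ does not come out, the fallback is to track the floors in Lemma~\ref{lem:r} more carefully, using the slack between \eqref{eq:rpoints} and \eqref{eq:fc}.
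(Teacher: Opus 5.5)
\textbf{Verdict: the approach is sound but the proof is unfinished.} Your Steps 1 and 2 are correct, but Step 3 is only a plan. The final inequality is never derived. You also leave open, as the ``main obstacle'', a hull-monotonicity lemma that you think needs loss $10$ per step. Your fallback through \eqref{eq:fc} is not available: \eqref{eq:fc} is only an upper bound on the number of interior-disjoint empty $r$-gons, and its attainability is open.

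\textbf{The obstacle is already resolved by your Step 2, and the constant $16$ is enough.} The three inequalities used there are
\[
a_{j+1}\ge a_j-\tfrac{h_j}{2}-4,\qquad a_{j+2}\ge a_{j+1}-\tfrac{h_{j+1}}{2}-4,\qquad a_{j+2}\le a_j-h_j .
\]
They hold verbatim at every level $j$, because every point of $A_j$'s color in $CH(A_j)$ belongs to $A_j$; they hold trivially when $a_j\le 2$. Hence $h_{j+1}\ge h_j-16$, and so $h_j\ge h-16j$ for all $j$.

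\textbf{Finishing the count.} For odd $t=2k-1$,
\[
0\le a_{2k}\le n_1-\sum_{i<k}h_{2i}\le n_1-kh+16k(k-1),
\]
which gives $h\le\frac{2}{t+1}n_1+8(t-1)$. For even $t=2k$, summing whole color-$1$ layers only yields a factor $k<\frac{t+1}{2}$. You need half a layer more, which your sketch does not provide. Use
\[
a_1\le a_2+\tfrac{h_1}{2}+4\le n_1-h+\tfrac{h_1}{2}+4
\]
together with $0\le a_{2k+1}\le a_1-\sum_{i<k}h_{2i+1}$. This gives $(k+\tfrac12)h\le n_1+16k^2-4$, that is, again $h\le\frac{2}{t+1}n_1+8(t-1)$. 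Both cases contradict the hypothesis, since $8(t-1)<10(t-1)$ for $t\ge2$.

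\textbf{Comparison with the paper.} The paper never tracks inner hull sizes. Lemma~\ref{lem:hull} substitutes $|V(CH(P_i))|\le|P_i|-|P_{i+2}|$ (your $a_{j+2}\le a_j-h_j$) directly into the count from Lemma~\ref{lem:r}. This gives $|P_{i+1}|>\frac12(|P_i|+|P_{i+2}|-10)$, so the differences $d_i=|P_i|-|P_{i+1}|$ satisfy $d_{i+1}>d_i-10$. Since these differences sum to at most $|S_1|$, one gets $|P_0|-|P_2|<\frac{2}{t+1}|S_1|+10(t-1)$, and $h\le|P_0|-|P_2|$ finishes.

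That is the same pair of inequalities, with the hull sizes eliminated instead of the layer sizes. It treats both parities of $t$ uniformly. Your route, once completed, gives an equally strong bound, but it needs the separate half-layer bookkeeping for even $t$. It yields $8(t-1)$ rather than $10(t-1)$ only because your floor estimate is slightly sharper.
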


\begin{proposition}\label{prop:shull}
Suppose $|S_1| \geq 655$ and 
\begin{equation}
|V(CH(S_1))| < \frac{|S_1| - 54}{44}  .   
\label{eq:shull}
\end{equation}
Then $S$ contains a monochromatic hexagon with at most one interior point.
\end{proposition}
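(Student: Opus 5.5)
Throughout, suppose for contradiction that $S$ contains no monochromatic hexagon with at most one interior point. Write $n:=|S_1|$ and $h:=|V(CH(S_1))|$, so that \eqref{eq:shull} reads $h<(n-54)/44$. The plan is to find a well-behaved triangulation $\mathcal T$ of $S_1$. Each bounded face $f$ of $\mathcal T$ gets the weight $w(f)$, the number of points of $S_2$ in its interior. We then show that the hexagon-free assumption forces $\sum_f w(f)$ to exceed what the colour-2 points can supply.

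\emph{Step 1: a $4$-connected triangulation.} By Euler's formula, $\mathcal T$ has $F=2n-h-2$ faces. I would use the result of \citet{DGR} to obtain a triangulation of $S_1$ with no separating triangle, i.e.\ one that is $4$-connected. It should apply here because $n\ge 655$, $h$ is small, and a separating triangle whose inside is crowded with colour-1 points could be replaced using the hexagon-free assumption. The point of $4$-connectivity is the following. Take any four faces that induce a subtree of the dual graph, either a path of four faces or a central face with its three neighbours. Their union is then a simple hexagon whose vertices are colour-1 points and which contains no colour-1 point inside. The one exception is a path of four faces closing up around a vertex of degree $4$. So, outside that exception, every such dual subtree has total weight at least $2$.

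\emph{Step 2: local counting.} Let $F_0$ and $F_1$ denote the faces of weight $0$ and weight $1$. From the weight-$\ge 2$ condition on every $4$-face dual subtree I would derive structural facts:
\begin{itemize}
\item empty faces form dual components of at most three faces;
\item an empty pair cannot touch a weight-$1$ face except in restricted ways;
\item no path of four faces contains three empty faces together with a weight-$1$ face.
\end{itemize}
Then I would discharge. Each empty face sends charge to adjacent faces of weight at least $2$ along dual edges. Counting dual adjacencies (every interior face has three neighbours, and the boundary faces number $O(h)$) should give
\[
\sum_f w(f)\;\ge\;\Bigl(\tfrac12+\varepsilon\Bigr)F - C h
\]
for explicit constants $\varepsilon$ and $C$. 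The exceptional clusters are four empty faces around a degree-$4$ vertex $v$. I expect these to be the main obstacle. For them I would flip one of the four spokes at $v$ and use that the quadrilateral link of $v$ has at most one reflex angle. This produces an alternative hexagon through $v$ and a neighbouring face, which forces extra weight nearby. That extra weight is charged to the cluster, so such clusters cost only a bounded loss per occurrence.

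\emph{Step 3: closing the count.} The bound from Step 2 says $|S_2'|\ge(1+2\varepsilon)n-O(h)$, where $S_2'$ is as in \eqref{eq:convexpoints}. Combined with $|S_2|\le n$, this leaves colour-2 points with a positive-fraction surplus inside $CH(S_1)$. If that already contradicts $|S_2|\le n$, we are done. If the constants are too weak, I would add the reverse direction as in the proof of Theorem~\ref{thm:c2}. Apply Lemma~\ref{lem:r} with $r=6$ to $S_2'$: this gives about $(2|S_2'|-h')/4$ colour-2 hexagons that are interior-disjoint and have no colour-2 point inside. Here $h'=|V(CH(S_2'))|$, which I may assume small, since otherwise Proposition~\ref{prop:lhull} applied with colours swapped finishes. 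Each of these hexagons needs at least two colour-1 points, so $|S_1'|\ge |S_2'|-O(h')$. Iterating once more and using $h<(n-54)/44$ should make the inequalities incompatible. Choosing the discharging rules so that the constants $44$, $54$ and $655$ come out is routine bookkeeping once the local lemmas are in place.
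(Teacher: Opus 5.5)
\textbf{There is a genuine gap, at the step you flag as the main obstacle: the zero wheels.} Your overall architecture does match the paper's: a $4$-connected triangulation via \citet{DGR}, colour-$2$ weights on faces, induced $4$-vertex dual trees giving colour-$1$ hexagons with no colour-$1$ point inside, and incidence counting over zero components.

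\emph{Why a local discharge cannot work.} Zero wheels (four empty faces around a degree-$4$ vertex $p$) cannot be absorbed as a ``bounded loss per occurrence''. Consider a weighting in which every face either lies in a zero wheel or has weight exactly $2$, and each weight-two face has its three sides on rim edges of three different wheels. Every induced $4$-vertex dual tree then contains a weight-two face, so every constraint \eqref{eq:fourT} holds. Yet, up to boundary terms, the number of wheels is $c_4\approx 3|S_1|/8$ and the total weight is $W=\lambda(T)/2\approx|S_1|$. So no bound $W\ge(\frac12+\varepsilon)F-Ch$ follows from the combinatorial constraints, and the number of wheels can be linear. Consistently, \eqref{eq:points} is vacuous at $c_4\approx 3|S_1|/8$.

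\emph{Why the spoke flip does not help.} Flipping $pp_i$ only re-triangulates the five-point wheel and leaves every outside face intact. Any hexagon you then form either contains a whole outside face of weight at least $2$ or uses only the five wheel points. So no new constraint arises.

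\emph{What the paper does instead.} It flips a \emph{rim edge} $p_1p_2$, so that the new diagonal $pq$ cuts the outside face $(p_1,p_2,q)$. If that face has weight exactly $2$, one of the hexagons $Q_1,Q_2$ has at most one interior point (Observation~\ref{obs:boundary}). Hence every such rim edge is nonflippable, and its quadrilateral has a reflex vertex at an endpoint of the rim edge. An angle-sum argument shows that each interior vertex that is not a wheel center is the reflex vertex of at most two such quadrilaterals. This gives the global bound $6c_4\le 2|S_1|-\delta(T)+3\overline W$ (Lemma~\ref{lem:cycle}). Only with this bound does the count close to $|S_1|\le 44\delta(T)+54$.

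Two further problems:
\begin{itemize}
\item \emph{Your Step~3 fallback cannot rescue a Step~2 without surplus.} For hexagons that must contain two points, the nested-hull counting from Lemma~\ref{lem:r} is exactly balanced: $|S_2'|\gtrsim|S_1|-h/2$ and $|S_1'|\gtrsim|S_2'|-h'/2$. This is why Theorem~\ref{thm:c2} only gives $\lambda_6^\star(2)\le 2$, and why Proposition~\ref{prop:lhull} only bounds the hull size rather than yielding a contradiction.
\item \emph{Your Step~1 does not establish the \citet{DGR} condition.} Smallness of $h$ is not enough. What is needed is $|P|-|V(CH(P))|>|V(CH(S_1))|$ with $P=S_1\setminus V(CH(S_1))$, which involves the second convex layer. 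That layer could a priori contain almost all of $S_1$, and locally ``replacing separating triangles'' is not how the characterization is used. The paper verifies the condition by applying Proposition~\ref{prop:lhull} with $t=16$ both to $S$ and to $S\cap CH(P)$. This is where $|S_1|\ge 655$ is needed.
\end{itemize}
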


The proofs of the above propositions are given in Section \ref{sec:lhullpf} and Section \ref{sec:shullpf}. To complete the proof of Theorem \ref{thm:hexagon}, using these two results, suppose $S$ is a 2-colored point set with $|S|\geq 306447$. Relabel the colors, if necessary, so that $|S_1|\geq |S_2|$. Hence, $|S_1| \geq 153224$. Suppose, for a contradiction, that $S$ does not contain a monochromatic hexagon with at most one interior point. Applying Proposition~\ref{prop:lhull} with $t=174$, we have
\begin{equation} \label{eq:hull_bound}
|V(CH(S_1))| < \frac{2}{175} |S_1| + 1730.
\end{equation}
Further, since $|S_1| \geq 153224 > 655$, by Proposition \ref{prop:shull} and \eqref{eq:hull_bound},
\[
|S_1| \leq 44 |V(CH(S_1))| + 54 < \frac{88}{175} |S_1| + 76174  ,  
\]
which implies
\[
|S_1| < \frac{13330450}{87} < 153224,
\]
which contradicts $|S|\geq 306447$.   \hfill $\Box$

\subsection{ Proof of Proposition \ref{prop:lhull} }
\label{sec:lhullpf}

Let $S_1', S_2'$ be as defined in \eqref{eq:convexpoints}. We first show that if $|S_2'|$ is small, then $S$ contains a monochromatic hexagon with at most one interior point.

\begin{lemma} \label{lem:hull} 
If $|S_2'| \leq \frac{1}{2} (|S_1| + |S_1'| - 10)$, then there exists a hexagon of color $1$ with at most one interior point.
\end{lemma}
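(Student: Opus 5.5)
The plan is a direct counting argument: apply Lemma~\ref{lem:r} with $r=6$ to the color-$1$ points alone, and show that the color-$2$ points inside $CH(S_1)$ are too few to put two points in every resulting hexagon. Write $h:=|V(CH(S_1))|$. We may assume $|S_1|\ge 3$; otherwise the hypothesis forces $|S_2'|<0$, which is impossible. Lemma~\ref{lem:r} applied to $S_1$ then gives at least
\[
N:=\left\lfloor \frac{|S_1|-2}{4}\right\rfloor+\left\lfloor \frac{|S_1|-h}{4}\right\rfloor
\]
pairwise interior-disjoint hexagons. Each has vertices of color $1$ and no point of $S_1$ in its interior. Each lies inside $CH(S_1)$, so every point in its interior has color $2$ and lies in $CH(S_1)$, that is, belongs to $S_2'$.

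Suppose, for a contradiction, that every such hexagon has at least two interior points. The hexagons are interior-disjoint, and no point of $S_2'$ lies on an edge of one of them, since the point set is in general position. So each point of $S_2'$ is counted at most once, and $2N\le |S_2'|$.

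For the lower bound on $N$, use $\lfloor x/4\rfloor\ge (x-3)/4$ for integers $x$. This gives $N\ge (2|S_1|-h-8)/4$, hence $2N\ge (2|S_1|-h-8)/2$. The link to $S_1'$ is that every point of $S_1'$ lies in the interior of $CH(S_2')\subseteq CH(S_1)$, so it is not a hull vertex of $S_1$. Therefore $|S_1|-h\ge |S_1'|$, and
\[
2N\;\ge\;\frac{|S_1|+|S_1'|-8}{2}\;>\;\frac{|S_1|+|S_1'|-10}{2}\;\ge\;|S_2'|,
\]
where the last step is the hypothesis. This contradicts $2N\le|S_2'|$. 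Hence some hexagon has at most one interior point, which is the required color-$1$ hexagon.

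The step that needs the most care is the bookkeeping: checking that every interior point of these hexagons really lies in $S_2'$, including the boundary conventions, and that the floor losses, at most $3/4$ per term, fit inside the slack of $10$ in the hypothesis. The computation above shows they do, with room to spare.
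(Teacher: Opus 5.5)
Your proposal is correct and is essentially the paper's argument: apply Lemma~\ref{lem:r} with $r=6$ to $S_1$, charge two distinct points of $S_2'$ to each of the resulting interior-disjoint color-$1$ hexagons, and combine the floor estimate with $|S_1|-|V(CH(S_1))|\ge |S_1'|$. You are somewhat more explicit than the paper: you justify the $S_1'$ inequality and dispose of the case $|S_1|\le 2$. You also use the slightly sharper bound $\lfloor x/4\rfloor\ge (x-3)/4$ in place of $\lfloor x/4\rfloor > x/4-1$, but the route is the same.
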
  

\begin{proof} Applying Lemma \ref{lem:r} to $S_1$, we have at least 
    \[
    \left\lfloor \frac{|S_1|-2}{4} \right \rfloor + \left \lfloor \frac{|S_1|-|V(CH(S_1))|}{4}  \right\rfloor
    \]
    pairwise interior-disjoint empty hexagons of color 1. To prevent hexagons of color 1 with at most one interior point, we must have that
    \begin{align*}
        |S_2'| & \geq 2 \left(\left\lfloor \frac{|S_1|-2}{4} \right \rfloor + \left \lfloor \frac{|S_1|-|V(CH(S_1))|}{4}  \right\rfloor \right) \\
        & > 2 \left(\frac{|S_1|-2}{4} - 1 + \frac{|S_1|-|V(CH(S_1))|}{4} - 1 \right) \\
        & \geq \frac{1}{2} \left(|S_1| + |S_1'| - 10 \right). 
    \end{align*} 
    This completes the proof of Lemma \ref{lem:hull}. 
\end{proof}

We now iterate the nested hull construction as follows: Begin with $P_0 = S_1$. For $i \geq 0$, define 
$$P_{i+1} \coloneqq
    \begin{cases}
        CH(P_i) \cap S_2, & \text{if } i \text{ is even}, \\
        CH(P_i) \cap S_1, & \text{if } i \text{ is odd}.
    \end{cases}
$$
If $S$ does not contain any monochromatic hexagon with at most one interior point, then by Lemma \ref{lem:hull}, 
\begin{align*}
    |P_{i+1}| > \frac{1}{2} \left(|P_i|+|P_{i+2}| - 10 \right)  ,   
\end{align*} 
for all  $i \geq 0$. Rearranging the terms, $|P_{i+1}| - |P_{i+2}| > |P_i| - |P_{i+1}| - 10$. Repeatedly applying this inequality for $t$ steps gives, for $t \geq 1$, 
 \begin{align} \label{eq:recurrence2}
        |P_{i+t}| - |P_{i+t+1}| > |P_i| - |P_{i+1}| - 10 t  .  
    \end{align}
    Hence, 
    \begin{align*}
    |P_{i}| - |P_{i+k+1}| & = \sum_{t=0}^{k} \left(|P_{i+t}| - |P_{i+t+1}| \right) \\
    & > |P_i| - |P_{i+1}| + \sum_{t=1}^{k} \left(|P_{i}| - |P_{i+1}| - 10 t \right) \tag*{(from \eqref{eq:recurrence2})} \\
    & = (k+1) (|P_i| - |P_{i+1}|) - 5 k (k+1) . 
    \end{align*}
    This implies, since $|P_{i+k+1}| \geq 0$, $|P_{i+1}| > \frac{k}{k+1} |P_i| - 5 k$. 
    Applying this with $i=0$ and $k=t$ gives, 
\begin{align} \label{eq:disc_type1}
    |P_1| > \frac{t}{t+1} |P_0| - 5 t
\end{align}
Also, applying the same inequality with $i=1$ and $k=t-1$, 
\begin{align*}
    |P_2|  > \frac{t-1}{t} |P_1| - 5 (t-1)  & > \frac{t-1}{t} \left(\frac{t}{t+1} |P_0| - 5 t \right) - 5 (t-1) \\
    & = \frac{t-1}{t+1} |P_0| - 10 (t-1)  ,  
\end{align*}
where the second inequality uses \eqref{eq:disc_type1}. Hence, 
\begin{align*}
    |V(CH(S_1))| = |V(CH(P_0))|  \leq |P_0| - |P_2| & < \frac{2}{t+1} |P_0| + 10 (t-1) \\
    & = \frac{2}{t+1} |S_1| + 10 (t-1),
\end{align*}
as required. This completes the proof of Proposition \ref{prop:lhull}.  \hfill  $\Box$

\subsection{Proof of Proposition \ref{prop:shull} }
\label{sec:shullpf}

We begin with the notion of 4-connected triangulations of planar point sets:

\begin{definition}[{\cite{DGR}}]
Given a finite point set $P\subset\mathbb{R}^{2}$ in general position, a \emph{$4$-connected triangulation} of $P$ is a plane graph $T$ with vertex set $P$ whose edges are line segments, whose bounded faces are triangles, and whose outer-face boundary is the  boundary of $CH(P)$, such that $T$ is $4$-connected, that is, $T$ has at least five vertices and remains connected after deleting any set of at most three vertices.
\end{definition}


\citet{DGR} provide a characterization of point sets admitting a 4-connected triangulation. Using this characterization, we show that the larger color class of any sufficiently large bichromatic point set admits a $4$-connected triangulation, provided that the point set contains no monochromatic hexagon with
at most one interior point. Throughout, for a contradiction, we will assume that $S$ is a 2-colored point set, satisfying the hypothesis of Proposition \ref{prop:shull}, where every monochromatic hexagon has at least 2 interior points.

\begin{lemma}\label{lem:S}
Suppose $|S_1|\geq655$ and $S$ does not contain a monochromatic hexagon with at most one interior point. Then $S_1$ admits a straight-line $4$-connected triangulation.
\end{lemma}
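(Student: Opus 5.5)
The plan is to reduce the lemma to the characterization of \citet{DGR}, and then rule out each obstruction in that characterization under our standing assumption that every monochromatic hexagon has at least two interior points.

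First, we recall the structural facts that any $4$-connected triangulation must satisfy. There can be no separating triangle. There can be no chord joining two non-adjacent vertices of $CH(P)$, since such a chord together with any further vertex would be a small separator. In particular $|V(CH(P))|\ge 4$, because if $CH(P)$ is a triangle then its three vertices separate the interior from the outer face. We expect \citet{DGR} to show that these necessary conditions are essentially sufficient once $|P|$ is large, up to a short list of exceptional configurations. These should be: $|V(CH(P))|=3$; very few interior points; or some hull vertex $v$ such that the triangle spanned by $v$ and its two hull neighbours forces the chord between those neighbours, namely when every interior point is hidden from $v$ or the interior points near $v$ force a separating triangle. The first step is to state this characterization precisely in a form that lists the finitely many obstruction types.

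The second step is to handle the obstructions one at a time, assuming $S_1$ exhibits one and deriving a monochromatic hexagon with at most one interior point.
\begin{itemize}
\item If $CH(S_1)$ is a triangle, or more generally has few vertices, we apply the counting of Lemma~\ref{lem:r} to $S_1$ and then to $S_2'=CH(S_1)\cap S_2$, exactly as in Lemma~\ref{lem:hull} and the iteration proving Proposition~\ref{prop:lhull}. The nested sizes $|P_i|$ cannot decrease fast enough, and a bounded hull already gives $|P_0|-|P_2|\ge |V(CH(S_1))|$. The intended route is to show that a degenerate obstruction forces the relevant nested set to lose fewer points than the iteration permits, with $|S_1|\ge 655$ making the constants work.
\item If the obstruction is local at a hull vertex $v$ (an \enquote{ear} that must be cut off), the region near $v$ contains few color-$1$ points. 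In that case we build a fan of triangles at $v$ together with the ear and group four consecutive triangles into a color-$1$ hexagon containing no color-$1$ point. Such a hexagon must then contain at least two color-$2$ points, and repeating this around the configuration produces a small color-$2$ subset whose own spiral hexagons (Lemma~\ref{lem:r}) are forced to be almost empty.
\end{itemize}

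The main obstacle is the second step. It requires knowing exactly which configurations \citet{DGR} exclude, and converting each of them into a hexagon with at most one interior point. For the triangle-hull case, the first thing to try is to choose, among all color-$1$ triangles in $S$ with many color-$1$ points inside, one minimizing interior points, in the spirit of the choice of $T$ in the proof of Theorem~\ref{thm:c2}. We would then run the two-layer count of \eqref{eq:t2r} with $r=6$ and $s=1$ to get a contradiction for $|S_1|\ge655$. The threshold $655$ would be chosen as the point where all the case bounds close simultaneously.
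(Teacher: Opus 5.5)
Your proposal has a genuine gap. It never identifies the condition from \citet{DGR} that actually has to be checked, and the obstruction-by-obstruction plan rests on guesses that do not hold up.

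First, the guessed obstruction list is partly wrong. A triangular hull is not an obstruction: deleting the three hull vertices leaves only interior vertices, and these can stay connected. For example, the octahedron drawn as two nested, oppositely oriented triangles is a straight-line $4$-connected triangulation whose hull is a triangle.

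Second, the ``ear'' case is not an argument. Nothing you describe forces the color-$2$ points produced there to span hexagons with at most one interior point.

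Third, the one concrete computation you propose provably cannot close. In the two-layer count \eqref{eq:t2r} with $r=6$ and $s=1$, the coefficient of $h_1$ is $\frac{s+1}{r-2}\bigl(\frac{2(s+1)}{r-2}-1\bigr)=0$ and the coefficient of $i_1$ is $1$. The bound therefore reduces to $|CH(T_2)\cap S_1|\ge i_1-10$, which is compatible with $|CH(T_2)\cap S_1|\le i_1$ for every choice of $h_1$. This borderline behaviour is exactly why Theorem~\ref{thm:c2} stops at $\lambda_6^\star(2)\le 2$.

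The missing idea is that only a global counting condition is needed, with no local case analysis. Set $P:=S_1\setminus V(CH(S_1))$. The paper shows $|P|-|V(CH(P))|>|V(CH(S_1))|$, i.e.\ more color-$1$ points lie strictly inside the second convex layer than on the outer hull. This implies Necessary Condition~3 of \citet{DGR}, which by their characterization is also sufficient.

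To obtain the inequality, apply Proposition~\ref{prop:lhull} with $t=16$ twice:
\begin{itemize}
\item to $S$, which gives $|V(CH(S_1))|<\frac{2}{17}|S_1|+150$;
\item to the restricted bichromatic set $S\cap CH(P)$, whose color-$1$ class is $P$, which gives $|V(CH(P))|<\frac{2}{17}|P|+150$.
\end{itemize}
The restriction is legitimate: a hexagon spanned in $S\cap CH(P)$ has its interior inside $CH(P)$, so it has the same interior points in $S$. Combining the two bounds yields $2|V(CH(S_1))|+|V(CH(P))|<\frac{98}{289}|S_1|+\frac{7350}{17}<|S_1|$ for $|S_1|\ge 655$. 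This rearranges to the required inequality.

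Your instinct to use the iteration behind Proposition~\ref{prop:lhull} is sound. It must, however, be applied to the first two convex layers of $S_1$ to feed this counting criterion, not to a case analysis of local configurations.
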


\begin{proof}
Applying Proposition~\ref{prop:lhull} to $(S_1,S_2)$ with $t=16$ gives,  
$$|V(CH(S_1))|<\frac{2}{17}|S_1|+150.$$
Let $P:=S_1\setminus V(CH(S_1))$ be the set of color $1$ points strictly inside $CH(S_1)$.  
Consider the restricted bichromatic point set $S\cap CH(P)$,
whose color $1$ class is precisely $P$. Applying Proposition~\ref{prop:lhull} to $S\cap CH(P)$ with $t=16$ gives, 
$$|V(CH(P))| <\frac{2}{17}|P|+150 =\frac{2}{17}\bigl(|S_1|-|V(CH(S_1))|\bigr)+150.$$
Combining the above, 
\begin{align*}
2|V(CH(S_1))|+|V(CH(P))| <\frac{32}{17}|V(CH(S_1))| +\frac{2}{17}|S_1|+150 <\frac{98}{289}|S_1|+\frac{7350}{17} < |S_1|,
\end{align*}
where the last inequality follows from $|S_1|\geq 655$. Consequently,
\begin{align*}
|P|-|V(CH(P))| =|S_1|-|V(CH(S_1))|-|V(CH(P))| >|V(CH(S_1))|.
\end{align*}
This inequality implies Necessary Condition~3 of~\cite{DGR}, whose RHS is $|V(CH(S_1))|$ minus a nonnegative quantity. By the characterization in \cite{DGR}, this condition is also sufficient. Hence, $S_1$ admits a straight-line $4$-connected triangulation.  
\end{proof}  

%
%

\subsubsection{Properties of 4-Connected Triangulations }

Let $ T:= T(S_1)$ be a 4-connected triangulation of $S_1$, which exists because of Lemma \ref{lem:S}, since $|S_1|  \geq 655$. Denote by $G_{T(S_1)}$ the dual of $T(S_1)$, where the vertices are the bounded triangular faces of $T$ and two triangles are adjacent in the dual if they share an interior edge. The following observation lists some elementary properties of $G:=G_{T(S_1)}$.

\begin{figure}[tbp]
  \centering
\begin{tikzpicture}[
  x=1.125cm,y=1.125cm,
  font=\normalsize,
  shared/.style={draw=teal!75!black,line width=1.3pt},
  first/.style={draw=blue!70!black,line width=1pt},
  second/.style={draw=orange!85!black,line width=1pt,dashed},
  dual/.style={circle,draw=black!65,fill=white,
               minimum size=8mm,inner sep=1pt},
  primal/.style={circle,fill=black,inner sep=1.5pt},
  rim/.style={draw=black!75,line width=.75pt}
]

\begin{scope}  
  \coordinate (u1) at (-1.75,0);
  \coordinate (u2) at (0,1.35);
  \coordinate (u3) at (1.75,0);
  \coordinate (u4) at (0,-.65);
  \coordinate (u4p) at (0,-1.75);

  \draw[shared] (u1)--(u2)--(u3);
  \draw[first] (u1)--(u4)--(u3);
  \draw[second] (u1)--(u4p)--(u3);

  \node[dual] at (u1) {$\Delta_1$};
  \node[dual] at (u2) {$\Delta_2$};
  \node[dual] at (u3) {$\Delta_3$};
  \node[dual,draw=blue!70!black,text=blue!70!black]
    at (u4) {$\Delta_4$};
  \node[dual,draw=orange!85!black,text=orange!85!black]
    at (u4p) {$\Delta'_4$};
\end{scope}

\begin{scope}[xshift=6.8cm] 
  \coordinate (a) at (-1.85,-1.30);
  \coordinate (b) at (-1.55,1.45);
  \coordinate (c) at (1.50,1.50);
  \coordinate (d) at (1.80,-1.15);
  \coordinate (p) at (-.18,.15);

  \fill[teal!7] (p)--(a)--(b)--cycle;
  \fill[teal!13] (p)--(b)--(c)--cycle;
  \fill[teal!7] (p)--(c)--(d)--cycle;
  \draw[rim] (a)--(b)--(c)--(d);
  \draw[rim] (a)--(p)--(d);
  \draw[shared] (b)--(p)--(c);
  \draw[gray,densely dashed,line width=.75pt] (a)--(d);

  \node at (-1.19,.15) {$\Delta_1$};
  \node at (-.07,1.04) {$\Delta_2$};
  \node at (1.11,.17) {$\Delta_3$};
  \node at (-.05,-.77)
    {$\textcolor{blue!70!black}{\Delta_4}
       =\textcolor{orange!85!black}{\Delta'_4}$};
  \node[font=\small] at (-6.125,-2.5)
    {(a)};
    
  \node[primal,label=below left:$a$] at (a) {};
  \node[primal,label=above left:$b$] at (b) {};
  \node[primal,label=above right:$c$] at (c) {};
  \node[primal,label=below right:$d$] at (d) {};
  \node[primal,label={[label distance=2pt]below:$p$}] at (p) {};
  \node[font=\small] at (0,-1.91)
    {(b)};
\end{scope} 
\end{tikzpicture}
\caption{\small{ Illustration for the proof of Observation \ref{obs:dual}: (a) The two $4$-cycles share the common path  $(\Delta_1, \Delta_2, \Delta_3)$ in the dual and (b) their primal configuration.  }}  
\label{fig:triangle}
\end{figure}

\begin{observation}\label{obs:dual} The graph $G$ has the following properties: 

\begin{itemize} 

\item[$(1)$] $G$ is triangle-free with maximum degree at most three.

\item[$(2)$] Every $4$-cycle of $G$ corresponds to four triangular faces incident with an interior vertex of $T$ of degree four.

\item[$(3)$] Two distinct $4$-cycles of $G$ cannot share a $2$-edge path.
\end{itemize}
\end{observation}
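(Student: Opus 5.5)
The plan is to translate each dual-graph property into a statement about the primal triangulation $T$ and then use $4$-connectivity. The key tool is that $T$ has no separating triangles. Indeed, a separating triangle's three vertices would form a $3$-vertex cut, contradicting that $T$ is $4$-connected. Consequently, every $3$-cycle of $T$ bounds a face. We also need that $T$ has at least five vertices, so $T$ is not a single triangle.

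For (1), the degree bound is immediate: a bounded triangular face has three edges, so it has at most three neighbours in $G$. For triangle-freeness, suppose three faces $\Delta_1,\Delta_2,\Delta_3$ are pairwise adjacent. Two faces sharing an edge have four distinct vertices in total. If all three pairwise share edges, then the shared edges must pass through a common vertex $v$. Otherwise, if $\Delta_1\cap\Delta_2=ab$ and $\Delta_1\cap\Delta_3=bc$ or $ac$, the three faces together cover a region whose union is bounded by a triangle. In either case, the three faces around a common vertex $v$ cover all edges at $v$, so $v$ has degree $3$ and is interior. Then the link of $v$ is a separating triangle (or the whole graph has only four vertices), which contradicts $4$-connectivity. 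I would carefully handle the case analysis of which edges are shared. The cleanest route is to note that a dual triangle forces three faces meeting pairwise along edges; the union of the three faces is then a triangle containing one interior vertex, giving a $3$-cut.

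For (2), take a dual $4$-cycle $\Delta_1\Delta_2\Delta_3\Delta_4$. The main obstacle is showing that the four shared edges all pass through one common vertex. I would first try the following. The closed dual cycle corresponds to a closed curve crossing four edges of $T$. This curve separates the plane, so the primal edges crossed must form a cut around some set of vertices enclosed by the curve. The enclosed vertices are those inside the union of the four triangles. Counting vertices, the union of four triangles glued cyclically along four edges has Euler characteristic forcing exactly one interior vertex $p$ of degree $4$, and the outer boundary is a $4$-cycle $abcd$. The alternative configurations are those where the shared edges do not all meet at one vertex. These would produce either a $3$-cycle enclosing vertices, which is a separating triangle, or a repeated vertex making the union non-simply-connected, which is excluded because faces are triangles in a plane graph. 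Both alternatives are ruled out by $4$-connectivity.

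For (3), suppose two distinct $4$-cycles share the path $\Delta_1\Delta_2\Delta_3$, as in Figure~\ref{fig:triangle}. By (2), both cycles are stars of degree-$4$ interior vertices $p$ and $p'$. The path $\Delta_1\Delta_2\Delta_3$ already determines the centre, because the two shared edges $\Delta_1\cap\Delta_2$ and $\Delta_2\cap\Delta_3$ meet at a unique common vertex, so $p=p'$. The four faces around $p$ are determined, so $\Delta_4=\Delta_4'$, a contradiction.
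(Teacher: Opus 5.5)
Your arguments for (1) and (3) match the paper's. In (1), a dual triangle forces the faces $vab$, $vac$, $vbc$ around a vertex $v$ of degree three, so $\{a,b,c\}$ is a $3$-cut. Your ``otherwise'' branch is not a separate case: checking which sides can coincide forces this configuration. In (3), the path $\Delta_1\Delta_2\Delta_3$ determines the centre as the common endpoint of the two shared sides of $\Delta_2$.

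The gap is in (2), at exactly the step you flag as the main obstacle. You say the vertices enclosed by your curve are ``those inside the union of the four triangles'' and that an Euler-characteristic count leaves one interior vertex. Both claims presuppose that this union is a disk bounded by the four uncrossed sides, which is precisely the content of (2). A cyclic strip of four triangles is a priori an annulus, and its hole could contain further faces and vertices. Your list of alternatives is also not the right one: no configuration yields a separating triangle, and $4$-connectivity is not what excludes the alternatives. The paper's own proof simply reads this step off Figure~\ref{fig:triangle}, so an actual argument is still needed.

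Here is one. Let $e_{i,i+1}$ be the edge shared by $\Delta_i$ and $\Delta_{i+1}$ (indices mod $4$); these four edges are distinct, since an edge lies on at most two faces.
\begin{itemize}
\item Let $\gamma$ be the closed polygon through their midpoints, whose piece from $e_{i-1,i}$ to $e_{i,i+1}$ lies in the interior of $\Delta_i$. Each $e_{i,i+1}$ meets $\gamma$ exactly once, transversally, so its endpoints lie on opposite sides of $\gamma$.
\item Let $w_i$ be the common vertex of the two shared sides of $\Delta_i$. Since $w_i$ and $w_{i+1}$ are both endpoints of $e_{i,i+1}$, either $w_i=w_{i+1}$, or $e_{i,i+1}=w_iw_{i+1}$ and $w_i,w_{i+1}$ lie on opposite sides of $\gamma$. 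Hence the number of indices with $w_i\neq w_{i+1}$ is even.
\item If it is two, the cyclic sequence $(w_i)$ consists of two constant runs with values $p\neq q$. Both changes give the edge $pq$, so two shared edges coincide, a contradiction.
\item If it is four, then $w_1,\dots,w_4$ are distinct (e.g.\ $w_1=w_3$ would give $e_{12}=e_{23}$), and $\Delta_i=w_{i-1}w_iw_{i+1}$. So the $\Delta_i$ are all four triangles spanned by four points in general position. These cannot all be faces of a straight-line triangulation: either two of the six segments cross, or one triangle contains the fourth point.
\item Hence all $w_i$ equal a single vertex $p$. Crossing $e_{12},e_{23},e_{34},e_{41}$ in turn makes one full turn around $p$ and returns to $\Delta_1$. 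So these are all the faces at $p$, $\deg p=4$, and $p$ is not a hull vertex.
\end{itemize}
This step uses no $4$-connectivity; within this observation, that hypothesis is needed only for triangle-freeness in (1).
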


\begin{proof}
Every triangular face in $T:= T(S_1)$ has three sides. Hence, every vertex of $G$ has degree at most three. Now, suppose that $G$ contains a triangle. Then the graph formed by the boundaries of the corresponding three triangular faces in the primal is a plane 4-clique in $T$. This means $T$ has an interior vertex of degree 3, which contradicts the 4-connectedness of $T$. Hence, $G$ is triangle-free. This proves the result in (1). 

Now, suppose that $G$ contains a $4$-cycle $(\Delta_1,\Delta_2,\Delta_3,\Delta_4)$ (see
Figure~\ref{fig:triangle}(a)). Then the corresponding triangular faces in the primal form a wheel centered at an interior vertex $p$, as shown in Figure \ref{fig:triangle}(b). These are precisely the four faces incident with $p$, which has degree 4 in $T$.  This proves (2). 

To show (3), suppose that $G$ contains two $4$-cycles $(\Delta_1,\Delta_2,\Delta_3,\Delta_4)$ and $(\Delta_1,\Delta_2,\Delta_3,\Delta'_4)$ sharing the two-edge path
$(\Delta_1,\Delta_2,\Delta_3)$, as shown in Figure \ref{fig:triangle}(a). In the primal, the two shared adjacencies correspond to two sides of $\Delta_2$ meeting at a vertex $p$ (see
Figure~\ref{fig:triangle}(b)). By (2), both cycles correspond to the four triangular faces incident with $p$. Hence, their fourth faces must coincide, giving $\Delta_4=\Delta'_4$.   
\end{proof}

\subsubsection{4-Vertex Trees in the Dual are Hexagons in the Primal }

Throughout, $G:=G_{T(S_1)}$ will denote the dual of a 4-connected triangulation $T:=T(S_1)$ of $S_1$. For $A \subseteq V(G)$, $G[A]$ is the induced subgraph of $G$ on the vertex set $A$.

\begin{lemma}\label{lem:T} 
Suppose $A \subseteq V(G)$, with $|A|=4$ and $G[A]$ is a tree. Then the union of four primal triangular faces corresponding to $A$ is a simple polygon with six boundary vertices. In particular, it forms a hexagon of color 1 with no points of color 1 in the interior.  
\end{lemma}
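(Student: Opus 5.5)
Since $G[A]$ is a tree on four vertices, the four faces can be added one at a time so that each new face $\Delta$ shares exactly one edge of $T$ with the union built so far. The two 4-vertex trees are the path and the star $K_{1,3}$. In either case I would order $A$ as $\Delta_1,\dots,\Delta_4$ so that each $\Delta_j$ ($j\ge2$) is dual-adjacent to exactly one earlier face. Write $U_j=\Delta_1\cup\dots\cup\Delta_j$. The plan is to show by induction that $U_j$ is a simple polygon (closed topological disk) with $j+2$ boundary vertices, all of which are points of $S_1$. For $j=4$ this gives six vertices. Because $T$ triangulates $S_1$ and the faces are empty of $S_1$, no point of color~1 lies in the interior. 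The interior of $U_4$ is the union of the open faces together with the open shared edges, and neither of these contains a vertex of $T$.

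The inductive step is gluing a triangle $\Delta_j=xyz$ to the disk $U_{j-1}$ along a single edge $xy$. The result is again a disk with one more boundary vertex, namely $z$, provided two things hold. First, $z$ must not already be a vertex of $U_{j-1}$. Second, $\Delta_j$ may meet $U_{j-1}$ only in the segment $xy$. Since faces of a plane triangulation have disjoint interiors, the only possible extra contact is at vertices or along edges. Extra edge contact would mean $\Delta_j$ is dual-adjacent to a second face of $A$, which the induced-tree hypothesis forbids. So everything reduces to showing that $z\notin V(U_{j-1})$, i.e. no \emph{vertex pinch}.

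This pinch exclusion is the main obstacle, and 4-connectivity is the tool. Suppose $z$ is already a vertex of some earlier face. Then $z$ and the edge $xy$ are both on the boundary of $U_{j-1}$. I would take the boundary path of $U_{j-1}$ from $x$ (or from $y$) to $z$. This path has at most $j-1\le 3$ edges, since $U_{j-1}$ has at most $5$ boundary vertices and at most one of the arcs is long. Adding the edges $zx$ or $zy$ of $\Delta_j$ closes it into a cycle $C$ of $T$ of length at most $3$ or $4$.
\begin{itemize}
\item If $C$ is a triangle (length $3$), it must be separating or a face. A non-facial separating triangle contradicts 4-connectivity. If it is a face, it lies in $A$ or is dual-adjacent to two faces of $A$, and both lead to a dual triangle or a non-tree induced subgraph, against Observation~\ref{obs:dual}(1) and the tree assumption.
\item If $C$ has length $4$, it encloses a region containing some of the faces of $A$ on one side. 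By 4-connectivity, a separating 4-cycle can enclose only a degree-4 vertex, or else it is a chordless quadrilateral split into two faces. In the first case the four faces around that vertex form a dual 4-cycle by Observation~\ref{obs:dual}(2). That would force a cycle inside $G[A]$, or force $A$ to contain the wheel minus a face, and then $\Delta_j$ closes a cycle. Either way this contradicts $G[A]$ being a tree.
\end{itemize}
A finite case check over the path and the star on four vertices, using the small lengths involved, should dispose of every configuration. I expect the bookkeeping here to be the delicate part.

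Finally, the boundary of $U_4$ is a closed curve through six points of $S_1$ with straight sides. It is therefore a simple hexagon of color~1, not necessarily convex, whose interior contains no point of $S_1$.
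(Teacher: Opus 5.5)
Your reduction is correct. With a leaf order of the induced tree, each new face $\Delta_j=xyz$ meets $U_{j-1}$ in a boundary edge $xy$. Since faces of $T$ intersect only in common vertices or edges, gluing adds exactly one boundary vertex unless $z$ is already a vertex of $U_{j-1}$. The paper organizes its $P_4$ case the same way: it builds three faces, then asks whether $\Delta_4$ introduces a new vertex.

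The gap is the pinch exclusion. This is the only place $4$-connectivity is used and the real content of the lemma, yet you leave it to an unperformed ``finite case check'', and the two arguments you sketch do not work as written:
\begin{itemize}
\item In the length-$4$ branch you claim that in a $4$-connected triangulation a separating $4$-cycle can only enclose a single degree-$4$ vertex. This is false: $4$-connectivity excludes only separating triangles, and separating $4$-cycles enclosing many vertices are allowed.
\item In the length-$3$ branch, suppose $C=xwz$ is a face $F\notin A$. Then $F$ being dual-adjacent to two or three faces of $A$ makes $G[A\cup\{F\}]$ cyclic, which does not contradict $G[A]$ being a tree. It gives a dual triangle only if two of those faces are adjacent to each other, which you have not shown.
\item You also overlook that $C$ might be the boundary of $CH(S_1)$, where ``non-facial, hence separating'' is unavailable.
\end{itemize}

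Here is what closes the argument quickly. Once edge contact is excluded, no pinch is possible for $j\le 3$, because every other boundary vertex of $U_{j-1}$ is a boundary neighbour of $x$ or $y$. For $j=4$ the apex must be the vertex of the pentagon $U_3$ opposite $xy$, so both arcs have length two and your length-$4$ branch is vacuous. Moreover, three faces forming a dual path share a vertex, so $U_3$ is a fan $(a,b,c),(a,c,d),(a,d,e)$ with boundary $a,b,c,d,e$. The only pinch candidates are $\Delta_4\in\{(a,b,d),(b,c,e),(c,d,a),(d,e,b),(e,a,c)\}$:
\begin{itemize}
\item $(c,d,a)$ is $\Delta_2$ itself.
\item $(a,b,d)$ and $(e,a,c)$ are impossible, since $ad$, resp.\ $ac$, already lies in two faces.
\item $(b,c,e)$ and $(d,e,b)$ create the $3$-cycles $ace$, resp.\ $abd$. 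Each contains an edge already in two faces, so it is neither a face nor the hull. It separates $b$ from $d$, resp.\ $c$ from $e$, contradicting $4$-connectivity.
\end{itemize}
This is exactly the paper's configuration (d). The useful cycle runs through an interior diagonal of the fan, which forces it to be non-facial, rather than along a boundary arc.

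If you want to keep your boundary-arc cycle, the face and hull subcases can instead be closed by a degree count. Every face at $x$ or $w$ is then $\Delta_4$, $F$ (if present), or a face of $U_3$, and only one face of $U_3$ contains both $x$ and $w$. This gives $\deg x+\deg w\le 7$, so one of them has degree at most $3$, again contradicting $4$-connectivity.
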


\begin{figure}[h]  
\centering 
\begin{tikzpicture}[
  x=1.125cm,y=1.125cm, font=\small,
 treevertex/.style={circle,fill=red!65!black,inner sep=1.6pt},
 treedual/.style={circle,draw=teal!75!black,fill=white,
   minimum size=6mm,inner sep=1pt,font=\small},
 treeedge/.style={line width=.75pt,draw=black!75},
 treeboundary/.style={line width=1.1pt,draw=black!85},
 treedualedge/.style={teal!75!black,line width=.9pt},  
 treetitle/.style={font=\small\bfseries,align=center}
]

\begin{scope}
 \coordinate (q0) at (0,0);
 \coordinate (q1) at (0,-1.15);
 \coordinate (q2) at (1.35,.9);
 \coordinate (q3) at (-1.35,.9);
 \draw[treedualedge] (q1)--(q0)--(q2) (q0)--(q3);
 \foreach \i in {0,1,2,3}
   \node[treedual] at (q\i) {$\Delta_{\i}$};
    \node[font=\small] at (0.01, -2.35)
    {(a)};
\end{scope}

\begin{scope}[shift={(6.8,0)},treedual/.append style={minimum size=5mm,inner sep=.4pt,font=\footnotesize}] 
 \coordinate (a) at (-1,-.55);
 \coordinate (b) at (1,-.5);
 \coordinate (c) at (.05,1.1);
 \coordinate (x) at (-.05,-1.4);
 \coordinate (y) at (1.85,.6);
 \coordinate (z) at (-1.8,.75);
 \fill[teal!17] (a)--(b)--(c)--cycle;
 \fill[teal!7] (a)--(b)--(x)--cycle;
 \fill[teal!7] (b)--(c)--(y)--cycle;
 \fill[teal!7] (c)--(a)--(z)--cycle;
 \draw[treeboundary] (a)--(x)--(b)--(y)--(c)--(z)--cycle;
 \draw[treeedge] (a)--(b)--(c)--cycle;
 \coordinate (q0) at (barycentric cs:a=1,b=1,c=1);
 \coordinate (q1) at (barycentric cs:a=1,b=1,x=1);
 \coordinate (q2) at (barycentric cs:b=1,c=1,y=1);
 \coordinate (q3) at (barycentric cs:c=1,a=1,z=1);
 \draw[treedualedge] (q1)--(q0)--(q2) (q0)--(q3);
 \foreach \i in {0,1,2,3}
   \node[treedual] at (q\i) {$\Delta_{\i}$};
 \foreach \v/\pos in {a/below left,b/below right,c/above,
                       x/below,y/right,z/left}
   \node[treevertex,label=\pos:$\v$] at (\v) {};
    \node[font=\small] at (0.01,-2.35)
    {(b)};
\end{scope}
\end{tikzpicture} 
\caption{\small{Illustration for the proof of Lemma \ref{lem:T}: (a) Four vertices forming an induced 3-star in the dual and (b) their primal configuration.  }  } 
\label{fig:graphs}
\end{figure}

\begin{proof}
Since $G[A]$ is a tree on four vertices, it is isomorphic to either the three-star $K_{1,3}$ or the three-edge path $P_4$.

\begin{itemize} 

\item $G[A]\cong K_{1, 3}$: Suppose $G[A]$ is a 3-star with central vertex $\Delta_0$ and leaves $\Delta_1, \Delta_2, \Delta_3$ (as shown in Figure \ref{fig:graphs} (a)). Suppose the central vertex $\Delta_0$ corresponds to the triangular face $(a, b, c)$ and the leaf vertices  $\Delta_1, \Delta_2, \Delta_3$ correspond to $(a, b, x)$, $(b, c, y)$, and $(c, a, z)$, respectively, in the primal. Note that each of $x,y,z$ is different from $a,b,c$, since otherwise its face would repeat the triangle $(a, b, c)$. Moreover, $x,y,z$ are pairwise distinct:
identifying any two would make the corresponding leaves share an edge, contradicting the induced star condition. Thus, the three leaf triangles are attached along distinct sides of the triangle $(a, b, c)$, as shown in Figure \ref{fig:graphs} (b). The boundary of the union of these four triangles is the hexagon $(a,x,b,y,c,z)$ of color 1 points with no other color 1 in the interior.

\item $G[A]\cong P_4$:  Suppose the vertices of $G[A]$ in path order are  $\Delta_1,\Delta_2,\Delta_3,\Delta_4$ (as shown in Figure~\ref{fig:graphp}(a)). Relabeling the primal vertices, suppose $\Delta_1,\Delta_2,\Delta_3$ correspond to the triangular faces $(a, b, c)$, $(a, c, d)$, and $(a, d, e)$, respectively. Now there are two possibilities: 

\begin{itemize} 

\item The dual vertex $\Delta_4$ introduces an additional vertex $f$ in the primal. Then the resulting boundary is a hexagon of color 1 with no other color 1 points in the interior (two allowed configurations are shown in Figure~\ref{fig:graphp}(b) and Figure~\ref{fig:graphp}(c)).

\item The dual vertex $\Delta_4$ introduces no additional vertex in the primal. This creates a configuration as in Figure~\ref{fig:graphp}(d). Then the triangle $(a,b,d)$ separates $c$ from $e$. Hence, deleting the vertices $a$, $b$, and $d$ disconnects $T$, which contradicts the $4$-connectedness of $T$. 

\end{itemize}
\end{itemize} 
This completes the proof of Lemma \ref{lem:T}.  
\end{proof}

\begin{figure}[h]
\centering
\begin{tikzpicture}[
 x=0.95cm,y=0.95cm,font=\small,
 treevertex/.style={circle,fill=red!65!black,inner sep=1.6pt},
 treedual/.style={circle,draw=teal!75!black,fill=white,
   minimum size=6mm,inner sep=1pt,font=\small},
 treeedge/.style={line width=.75pt,draw=black!75},
 treeboundary/.style={line width=1.1pt,draw=black!85},
 treedualedge/.style={teal!75!black,dashed,line width=.9pt},
 panelcaption/.style={font=\small,align=center,anchor=north,
   text width=4cm}
]

\begin{scope}[rotate=-90]
 \coordinate (q1) at (-1.8,-.25);
 \coordinate (q2) at (-.6,.3);
 \coordinate (q3) at (.6,-.25);
 \coordinate (q4) at (1.8,.3);
 \draw[treedualedge] (q1)--(q2)--(q3)--(q4);
 \foreach \i in {1,2,3,4}
   \node[treedual] at (q\i) {$\Delta_{\i}$};
\end{scope}
\node[font=\small,anchor=north] at (0,-2.35) {(a)};

\begin{scope}[shift={(3.1,0)},
 treedual/.append style={minimum size=5mm,inner sep=.4pt,
                         font=\footnotesize}]
 \coordinate (a) at (-1.65,-.5);
 \coordinate (b) at (-.65,-1.6);
 \coordinate (c) at (.9,-1.1);
 \coordinate (d) at (1.7,.1);
 \coordinate (e) at (.8,1.3);
 \coordinate (f) at (-1.1,1.1);
 \fill[teal!7] (a)--(b)--(c)--cycle;
 \fill[teal!15] (a)--(c)--(d)--cycle;
 \fill[teal!7] (a)--(d)--(e)--cycle;
 \fill[orange!15] (a)--(e)--(f)--cycle;
 \draw[treeboundary] (a)--(b)--(c)--(d)--(e)--(f)--cycle;
 \draw[treeedge] (a)--(c) (a)--(d) (a)--(e);
 \coordinate (q1) at (barycentric cs:a=1,b=1,c=1);
 \coordinate (q2) at (barycentric cs:a=1,c=1,d=1);
 \coordinate (q3) at (barycentric cs:a=1,d=1,e=1);
 \coordinate (q4) at (barycentric cs:a=1,e=1,f=1);
 \draw[treedualedge] (q1)--(q2)--(q3)--(q4);
 \foreach \i in {1,2,3,4}
   \node[treedual] at (q\i) {$\Delta_{\i}$};
 \foreach \v/\pos in {a/left,b/below,c/below right,d/right,
                       e/above right,f/above left}
   \node[treevertex,label=\pos:$\v$] at (\v) {};
 \node[panelcaption] at (0,-2.35)
   { (b) };
\end{scope}

%
%

\begin{scope}[shift={(7.4,0)},
 treedual/.append style={minimum size=5mm,inner sep=.4pt,
                         font=\footnotesize}]
 \coordinate (a) at (0,0);
 \coordinate (b) at (-1.6,-.9);
 \coordinate (c) at (.15,-1.8);
 \coordinate (d) at (1.65,-.45);
 \coordinate (e) at (1.1,1.3);
 \coordinate (f) at (-1.1,1.35);
 \fill[teal!7] (a)--(b)--(c)--cycle;
 \fill[teal!15] (a)--(c)--(d)--cycle;
 \fill[teal!7] (a)--(d)--(e)--cycle;
 \fill[orange!15] (a)--(e)--(f)--cycle;
 \draw[treeboundary] (a)--(b)--(c)--(d)--(e)--(f)--cycle;
 \draw[treeedge] (a)--(c) (a)--(d) (a)--(e);
 \coordinate (q1) at (barycentric cs:a=1,b=1,c=1);
 \coordinate (q2) at (barycentric cs:a=1,c=1,d=1);
 \coordinate (q3) at (barycentric cs:a=1,d=1,e=1);
 \coordinate (q4) at (barycentric cs:a=1,e=1,f=1);
 \draw[treedualedge] (q1)--(q2)--(q3)--(q4);
 \foreach \i in {1,2,3,4}
   \node[treedual] at (q\i) {$\Delta_{\i}$};
 \foreach \v/\pos in {a/left,b/below,c/below right,d/right,
                       e/above right,f/above left}
   \node[treevertex,label=\pos:$\v$] at (\v) {};
 \node[panelcaption] at (0,-2.35)
   { (c) };
\end{scope}

\begin{scope}[shift={(11.7,0)}]
 \coordinate (a) at (-1.65,-1.3);
 \coordinate (b) at (1.65,-1.3);
 \coordinate (c) at (.05,-.80);
 \coordinate (d) at (-.2,-.1);
 \coordinate (e) at (-.1,1.45);
 \fill[teal!7] (a)--(b)--(c)--cycle;
 \fill[teal!15] (a)--(c)--(d)--cycle;
 \fill[teal!7] (a)--(d)--(e)--cycle;
 \fill[orange!15] (b)--(d)--(e)--cycle;
 \draw[treeedge] (a)--(b)--(c)--cycle (a)--(d)--(c)
   (a)--(e)--(d)--(b)--(e);
 \draw[orange!80!black,line width=1.5pt,densely dashed]
   (a)--(b)--(d)--cycle;
 \node[font=\scriptsize,text=teal!75!black] at (0,-1.14) {$\Delta_1$};
 \node[font=\scriptsize,text=teal!75!black] at (-.60,-.73) {$\Delta_2$};
 \node[font=\scriptsize,text=teal!75!black] at (-.65,.02) {$\Delta_3$};
 \node[font=\scriptsize,text=teal!75!black] at (.45,.02) {$\Delta_4$};
 \foreach \v/\pos in {a/below left,b/below right,c/right,d/above right,
                       e/above left}
   \node[treevertex,label=\pos:$\v$] at (\v) {};
 \node[panelcaption] at (0,-2.35)
   { (d) };
\end{scope}
\end{tikzpicture}  
\caption{\small{Illustration for the proof of Lemma \ref{lem:T}: (a) Four vertices forming an induced 3-edge path in the dual, (b)-(c) allowed primal configurations, and (d) forbidden configuration. }}    
\label{fig:graphp}
\end{figure}

\subsubsection{  Weighting Dual Vertices by the Number of Interior Points  }

Let $\lambda(T)$ be the number of bounded triangular faces in $T$, $e(T)$ the number of edges in $T$, and $\delta(T) = |V(CH(S_1))|$ the number of boundary vertices in $T$. Then counting edge-face incidences gives $3 \lambda(T) =2 e(T) - \delta(T)$ and applying Euler's formula gives $|S_1| - e(T)+\lambda(T)=1$. Combining these gives 
\begin{align}\label{eq:edge}
\lambda(T)=2 |S_1|- \delta(T)-2  .  
\end{align}  
For every bounded face $f$ of $T$, denote by $x_f$ the corresponding vertex in the dual $G$ and $d_{G}(x_f)$ the degree of $x_f$. Then $3-d_{G}(x_f)$ is exactly the number of its sides lying on the convex hull of $S_1$. Since every hull edge is incident with exactly one bounded face, 
\begin{equation}\label{eq:Euler}
\qquad \sum_{x \in V(G)}(3-d_{G}(x))= \delta(T).
\end{equation}
For the dual vertex $x_f$ corresponding to a bounded face $f$ of $T$, define the weight of the face $f$ and the total weight as follows:  
$$\mathrm{wt}(x_f)=|S_2 \cap\interior(f)| \quad \text{ and } \quad W=\sum_{x\in V(G)}\mathrm{wt}(x).$$
Note that $\mathrm{wt}(x_f)$ is the number of points of color 2 inside the face $f$ (which is a triangle of color 1) and $W$ is the sum of the weights over all bounded faces of $T$, which is the number of points of color 2 inside $CH(S_1)$. Hence, 
\begin{equation}\label{eq:W}
W =| S_2 \cap \interior(CH(S_1))|\le |S_2|\le |S_1|.
\end{equation}
Then Lemma \ref{lem:T} and the assumption that every monochromatic hexagon in $S$ contains at least 2 interior points imply, 
\begin{equation}\label{eq:fourT}
 \sum_{x \in A} \mathrm{wt}(x) \ge 2  ,  \quad\text{ if } |A|=4 \text{ and } G[A] \text{ is a tree}.
\end{equation}
Each vertex of $G$ with weight zero corresponds to an empty triangle of color 1 in the primal $T$. Denote the set of {\it zero vertices} by 
\begin{align}\label{eq:Z0}
Z_0=\{  x \in V(G): \mathrm{wt}(x)=0\}  .  
\end{align}
A \emph{zero component} of $G$ is a connected component of the induced graph $G[Z_0]$.

\subsubsection{Structure of Zero Components}

The following observation records some properties of a zero component. Let $N_G(x)$ be the set of neighbors of a vertex $x$ in $G$.

\begin{observation}\label{obs:z}
Let $F$ be a connected component of $G[Z_0]$. Then:
\begin{enumerate}
\item[$(1)$] $F$ is a single vertex, a single edge, a $2$-edge path, or a
$4$-cycle.

\item[$(2)$] If $F$ is a $4$-cycle, then every vertex $x \in V(G)\setminus V(F)$ satisfies $|N_G(x)\cap V(F)|\le 1$. Further, if $N_G(x)\cap V(F)\ne\varnothing$, then $\mathrm{wt}(x) \ge 2$.
\end{enumerate}
\end{observation}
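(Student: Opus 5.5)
The plan is to use \eqref{eq:fourT} together with the structural facts in Observation~\ref{obs:dual}. The key point is that any connected subgraph of $G$ on four vertices contains a spanning tree. However, \eqref{eq:fourT} only applies when the \emph{induced} subgraph $G[A]$ is itself a tree. So we first pin down which connected induced subgraphs on four vertices $G$ can have. By Observation~\ref{obs:dual}(1), $G$ is triangle-free. A connected triangle-free graph on four vertices is therefore $P_4$, $K_{1,3}$, or $C_4$.

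\textbf{Part (1).} Let $F$ be a zero component. If $F$ contains four vertices $A$ inducing a tree in $G$, then the total weight on $A$ is $0<2$, which contradicts \eqref{eq:fourT}. We then check the cases.
\begin{itemize}
\item If $|V(F)|\le 3$, then $F$ is connected and triangle-free, so it is a vertex, an edge, or a $2$-edge path.
\item If $|V(F)|\ge 4$, pick any connected $4$-vertex subset $A\subseteq V(F)$, for instance by growing a BFS subtree. Its induced graph $G[A]$ (which equals $F[A]$, since $F$ is induced) is not a tree, so it must be $C_4$. Hence every connected $4$-subset of $V(F)$ induces a $4$-cycle.
\item If $|V(F)|\ge5$, take a $4$-cycle $C$ in $F$ and a vertex $y\in V(F)\setminus V(C)$ adjacent to some $c\in V(C)$. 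Let $c'$ be a cycle-neighbour of $c$ and $c''$ the other neighbour of $c'$ on $C$. Then $\{y,c,c',c''\}$ is connected, so by the previous step it induces a $4$-cycle. Thus $y$ is adjacent to $c''$, and $y\ne c$ by choice. Now the $4$-cycles $(y,c,c',c'')$ and $C$ share the $2$-edge path $(c,c',c'')$ but are distinct, since $y\notin V(C)$. This contradicts Observation~\ref{obs:dual}(3).
\end{itemize}
Hence $F$ is exactly a $4$-cycle when $|V(F)|\ge 4$.

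\textbf{Part (2).} Let $F=(c_1,c_2,c_3,c_4)$ be a zero $4$-cycle and $x\notin V(F)$. Suppose first that $x$ has two neighbours in $F$.
\begin{itemize}
\item If they are adjacent on the cycle, this gives a triangle, contradicting Observation~\ref{obs:dual}(1).
\item If they are opposite, say $c_1$ and $c_3$, then $(x,c_1,c_2,c_3)$ is a $4$-cycle sharing the path $(c_1,c_2,c_3)$ with $F$, contradicting Observation~\ref{obs:dual}(3).
\end{itemize}
So $|N_G(x)\cap V(F)|\le1$.

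Now suppose $x$ is adjacent to $c_1$. Then $A=\{x,c_1,c_2,c_3\}$ induces a path $x c_1 c_2 c_3$. It is induced because $x$ has no other neighbours in $F$ and $c_1c_3$ is not an edge, $G$ being triangle-free. By \eqref{eq:fourT}, $\mathrm{wt}(x)+0\ge 2$, which gives $\mathrm{wt}(x)\ge2$.

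The only delicate step is the $|V(F)|\ge5$ exclusion in Part (1). There one has to make sure the chosen $4$-set is connected, and that the two resulting $4$-cycles are genuinely distinct, so that Observation~\ref{obs:dual}(3) applies.
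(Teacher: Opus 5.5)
Your proof is correct. Part (2) is essentially the paper's argument. The only difference is that you certify $\mathrm{wt}(x)\ge 2$ with the induced path $x c_1 c_2 c_3$, whereas the paper uses the induced $3$-star on $x$, $c_1$ and the two $F$-neighbours of $c_1$; either works.

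For part (1) your route genuinely differs. The paper first notes that, by triangle-freeness, a zero vertex with three zero neighbours would induce a zero-weight $K_{1,3}$. Hence $G[Z_0]$ has maximum degree at most $2$, and every component is a path or a cycle. Since a component is an induced subgraph of $G$, four consecutive vertices of a path on at least four vertices, or of a cycle of length at least five, induce $P_4$. Triangles are excluded, which leaves exactly the four listed shapes.

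You instead classify the connected triangle-free graphs on four vertices ($P_4$, $K_{1,3}$, $C_4$), conclude that every connected $4$-subset of $V(F)$ induces $C_4$, and rule out $|V(F)|\ge 5$ via Observation~\ref{obs:dual}(3). This is valid; your deduction that $y$ is adjacent to $c''$ follows from degree counting in the induced $C_4$. However, it invokes a structural fact the paper's proof of (1) does not need. You could avoid it by taking $y$ together with $c$ and \emph{both} cycle-neighbours of $c$. That connected $4$-set has a vertex of degree three, so it cannot induce $C_4$. It is then a zero-weight induced $3$-star, contradicting \eqref{eq:fourT} directly, which is exactly the paper's degree argument.
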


\begin{proof}
A vertex of $G[Z_0]$ with three neighbors in $G[Z_0]$ would, together with those neighbors, induce a 3-star, since $G$ is triangle-free. The total weight of this 3-star would be zero, contradicting \eqref{eq:fourT}. Thus, every vertex of $G[Z_0]$ has degree at most 2, hence, any component $F$ is a path or a cycle. A path on at least four vertices, or a cycle of length
at least five, contains an induced 4-vertex path, again contradicting \eqref{eq:fourT}. Since $G$ is triangle-free, this proves (1).

For (2), suppose that $F$ is a $4$-cycle and that $x \notin V(F)$ has two neighbors on $F$. These neighbors cannot be consecutive vertices of $F$, since that would create a triangle in $G$. If they are opposite vertices of $F$, these two vertices and $x$ form a second $4$-cycle with one of their common neighbors on $F$. This cycle shares
a two-edge path with $F$, contradicting Observation~\ref{obs:dual} (3).  Hence $x$ can have at most one neighbor on $F$. Further, if $x$ is adjacent to $a\in V(F)$, then $x$, $a$, and the two neighbors of $a$ on $F$ induce a 3-star. The three vertices on $F$ have weight zero. Therefore, \eqref{eq:fourT} gives $\operatorname{wt}(x)\ge 2$.
\end{proof}

The next lemma describes the zero-weight neighbors of weight-one vertices.

\begin{lemma}\label{lem:unit}
Let $x \in V(G)$ satisfy $\operatorname{wt}(x)=1$. Then the set $N_G(x)\cap Z_0$ has exactly one of the following forms:
\begin{enumerate}
\item[$(1)$] At most two vertices, each forming a singleton component of $G[Z_0]$.

\item[$(2)$] A single vertex belonging to a single-edge component of $G[Z_0]$.

\item[$(3)$] The two endpoints of a $2$-edge-path component of $G[Z_0]$.
\end{enumerate}
Further, for each $2$-edge-path component $(a,b,c)$ of $G[Z_0]$, there is at most one vertex of weight one adjacent to both $a$ and $c$.
\end{lemma}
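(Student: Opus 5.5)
The plan is to exploit \eqref{eq:fourT} on small induced trees that contain $x$ together with zero vertices. Since $\mathrm{wt}(x)=1$, any $4$-vertex set $A \ni x$ with $G[A]$ a tree and the other three vertices in $Z_0$ has total weight $1<2$. So no such $A$ can exist. Recall that $G$ is triangle-free of maximum degree three (Observation~\ref{obs:dual}(1)). By Observation~\ref{obs:z}(1), every zero component is a singleton, an edge, a $2$-edge path, or a $4$-cycle. We then do a case analysis on which components $x$ meets and how.

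First we rule out a neighbor of $x$ lying in a $4$-cycle component: Observation~\ref{obs:z}(2) forces $\mathrm{wt}(x)\ge 2$. Next, suppose $x$ is adjacent to a zero vertex $a$ that has a zero neighbor $b\ne x$. If $b$ has a further zero neighbor $c\neq a$ (which happens when $a$ is an endpoint of a $2$-edge path, or when $a$ is the middle and we take the other end), then we look at $\{x,a,b,c\}$. Triangle-freeness and the absence of $4$-cycles through $x$ among zero vertices are needed here. If $G[\{x,a,b,c\}]$ is the path $x\!-\!a\!-\!b\!-\!c$, this contradicts \eqref{eq:fourT}. If instead $x$ is also adjacent to $c$, we are in alternative $(3)$. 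If $a$ is the middle of a path $(b,a,c)$, then $\{x,a,b,c\}$ is a $3$-star centered at $a$, which is again forbidden (it is induced since $x$ is adjacent to neither $b$ nor $c$; otherwise we would get a triangle). Hence, when $x$ meets a $2$-edge path, it meets exactly its two endpoints. For an edge component $\{a,b\}$, $x$ cannot be adjacent to both $a$ and $b$ (no triangle), so it meets exactly one of them.

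Second, we combine components. Let $x$ have zero neighbors $u,v$ in distinct components. If $u$ has a zero neighbor $u'$, then $\{u',u,x,v\}$ induces a path (no triangles, and the components are distinct so there are no cross edges), which is forbidden. Thus if $x$ meets two or more components, all of them are singletons. The same argument excludes combining an endpoint-pair of a path with any other zero neighbor: with $u=a$, $u'=b$, and $v$ the other neighbor, we get a path. Three singleton neighbors would give an induced $3$-star $\{x,u,v,w\}$ of weight $1$, which is forbidden. This leaves exactly the forms $(1)$--$(3)$.

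For the uniqueness claim, we suppose two weight-one vertices $x\neq y$ are both adjacent to $a$ and $c$ of a $2$-edge path $(a,b,c)$. Then $(a,b,c,x)$ and $(a,b,c,y)$ are two distinct $4$-cycles sharing the $2$-edge path $(a,b,c)$, contradicting Observation~\ref{obs:dual}(3).

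The main obstacle is carefully verifying inducedness of each $4$-set, which relies on triangle-freeness and on distinct components having no edges between zero vertices.
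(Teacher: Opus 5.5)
Your proof is correct and is essentially the paper's argument, organized as ``within one component'' followed by ``across components'': you exclude $4$-cycle components via Observation~\ref{obs:z}(2), rule out adjacency to the middle of a $2$-edge path (induced $3$-star) or to only one of its endpoints (induced $P_4$), rule out mixing components via the induced path $(u',u,x,v)$ and three singleton neighbors via a $3$-star (all by \eqref{eq:fourT}), and get uniqueness from Observation~\ref{obs:dual}(3). One small slip: your parenthetical suggests that the case where $a$ is the middle of a $2$-edge path fits the ``$b$ has a further zero neighbor'' setup, which it does not, since an endpoint's only zero neighbor is the middle vertex; you do, however, then treat that case correctly with the $3$-star centered at $a$.
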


\begin{proof}
By Observation~\ref{obs:z} (2), $x$ has no neighbor in a $4$-cycle component of $G[Z_0]$. Now, we consider the following cases: 

\begin{itemize} 

\item First, suppose that $a \in N_G(x)\cap Z_0$, where $(a, b)$ is a single-edge component of $G[Z_0]$. Since $G$ is triangle-free, $x$ is not adjacent to $b$. If $x$ had another
zero-weight neighbor $d$, then $d$ would belong to a different component of $G[Z_0]$. Then $(b,a,x,d)$ would be an induced 4-vertex path of total weight one, contradicting
\eqref{eq:fourT}. This proves $(2)$. 

\item Next, suppose that $x$ has a neighbor in a $2$-edge-path component $(a,b,c)$ of $G[Z_0]$. If $x$ were adjacent to $b$, then triangle-freeness of $G$ would prevent adjacency to $a$ and $c$. Then $\{a,b,c,x\}$ would induce a 3-star of total weight one, contradicting \eqref{eq:fourT}. Thus, $x$ is adjacent to either $a$ or $c$. Note that if $x$ is adjacent to $a$, then it must also be adjacent to $c$ (and vice versa), otherwise $(x,a,b,c)$ would be an induced 4-vertex path of total weight one. Finally, any further zero-weight neighbor $d$ would lie in a different zero component and make $(b,a,x,d)$ an induced 4-vertex path of total weight one. This proves $(3)$. 

\item In the remaining case, every zero-weight neighbor of $x$ forms a singleton component of $G[Z_0]$. If there are three such neighbors, then these neighbors and $x$ combined would induce a 3-star of total weight one, again contradicting \eqref{eq:fourT}. Hence, $x$ can have at most two singleton zero-weight neighbors. This proves (1). 
\end{itemize}

Finally, if two distinct weight-one vertices $x, x' \in V(G)$ were adjacent to both endpoints of the same $2$-edge-path component $(a,b,c)$ of $G[Z_0]$, then $(a,b,c,x)$ and $(a,b,c,x')$ would be distinct $4$-cycles sharing the two-edge path $(a,b,c)$. This contradicts
Observation~\ref{obs:dual} (3).  
\end{proof}

\subsubsection{Incidence Counting}

Let $a_1, a_2, a_3, c_4$ be the numbers of zero components of $G$ that are isomorphic to a single vertex, a single edge, a $2$-edge path, and a $4$-cycle, respectively. Also, let $e_{\circ}$ and $e_{\bullet}$ denote the numbers of edges joining weight-one vertices in $G$ to zero components consisting of a single vertex and a single edge, respectively. Further, let $\gamma$ be the number of 2-edge-path zero components adjacent to a weight-one vertex.

\begin{lemma} With the above definitions, the following holds: 
\begin{align}\label{eq:points}
 3 |S_1| \le16 \delta(T) +18+8 c_4-15 \overline W - 6 a_2 , 
 \end{align}
where $\overline W :=\sum_{\mathrm{wt}(x)\geq 2} (\mathrm{wt}(x)-2)$. 
\end{lemma}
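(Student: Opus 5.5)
The plan is a double-counting argument. I would write $\lambda(T)$ in two ways, once via \eqref{eq:edge} and once by splitting the dual vertices by weight. Then I would bound the number of zero vertices by counting dual edges leaving zero components, using the local restrictions from Observation~\ref{obs:z} and Lemma~\ref{lem:unit}.

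\textbf{Setting up the counts.} Let $n_0=|Z_0|$, let $n_1$ be the number of weight-one vertices, and let $n_2$ be the number of vertices of weight at least two. Then
\[
n_0=a_1+2a_2+3a_3+4c_4, \qquad n_0+n_1+n_2=\lambda(T)=2|S_1|-\delta(T)-2
\]
by \eqref{eq:edge}. By \eqref{eq:W},
\[
n_1+2n_2+\overline W = W \le |S_1|.
\]
The whole task is then an upper bound on $n_0$ of the form $n_0\le \alpha n_1+\beta n_2+\mu\,\delta(T)+(\text{const})+(\text{terms in } c_4, a_2)$. Substituting it into the two identities above eliminates $n_0,n_1,n_2$ and gives an inequality between $|S_1|$, $\delta(T)$, $c_4$, $a_2$ and $\overline W$. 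The coefficients $16,18,8,15,6$ should come out of this elimination.

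\textbf{Counting edges out of zero components.} By Observation~\ref{obs:dual}(1), every dual vertex $x$ has $d_G(x)=3$ minus its number of hull sides. So a singleton, an edge, a $2$-edge path and a $4$-cycle zero component send out $3,4,5,4$ dual edges respectively, minus their hull sides. Summing over components and using \eqref{eq:Euler} for the hull sides gives
\[
3a_1+4a_2+5a_3+4c_4-\delta(T)\le e_\circ+e_\bullet+2\gamma+E_{\ge2},
\]
where $E_{\ge 2}$ counts edges from zero components to vertices of weight at least $2$. Here I am using Lemma~\ref{lem:unit}: a path component receives at most two edges from weight-one vertices, both from the same vertex. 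By the three mutually exclusive cases of Lemma~\ref{lem:unit}, each weight-one vertex contributes to exactly one of the three edge types, so
\[
\tfrac12 e_\circ+e_\bullet+\gamma\le n_1.
\]
By Observation~\ref{obs:dual}(1), $E_{\ge2}\le 3n_2$. By Observation~\ref{obs:z}(2), each $4$-cycle has four distinct neighbours of weight at least two; this is where the $c_4$ correction enters.

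\textbf{Refining the charging.} The crude version of this count is not sharp enough, because a weight-$\ge 2$ vertex adjacent to zero components has weight to spare. So I would refine it with a discharging scheme. Each zero component gets an initial deficit proportional to its size. It then receives charge along its outgoing edges: weight-one vertices send fixed amounts according to which case of Lemma~\ref{lem:unit} they are in, heavy vertices send an amount proportional to $\mathrm{wt}(x)$, and hull sides send a constant. I would also use the fact that a vertex of weight exactly $2$ adjacent to a single-edge component $(a,b)$ is constrained by \eqref{eq:fourT}, since $\{a,b,x,\cdot\}$ trees have weight only $\mathrm{wt}(x)$. That observation produces the extra $-6a_2$ slack. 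The terms $-15\overline W$ and $+8c_4$ arise because heavy vertices pay in proportion to $\mathrm{wt}(x)-2$, while $4$-cycles are paid for only by their four heavy neighbours. I would fix the multipliers by solving a small linear program so that every local configuration ends nonnegative.

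\textbf{Main obstacle.} The hard step is choosing these multipliers so that every local case is verified and the constants come out exactly as $16,18,8,15,6$. The additive constant $18$ presumably absorbs boundary effects and the $-2$ in \eqref{eq:edge}. If a uniform scheme fails, I would handle the worst cases directly: singletons surrounded by three weight-one vertices, and $2$-edge paths sharing a weight-one vertex, where the last clause of Lemma~\ref{lem:unit} limits multiplicity.
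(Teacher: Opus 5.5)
Your setup and your edge-count inequality $3a_1+4a_2+5a_3+4c_4-\delta(T)\le e_\circ+e_\bullet+2\gamma+E_{\ge2}$, together with $e_\circ+2e_\bullet+2\gamma\le 2n_1$ and $E_{\ge2}\le 3n_2$, are exactly the paper's starting point. You are right that these alone are too weak. As linear constraints they allow $n_2=0$, $e_\circ=2n_1$ and all zero vertices lying in $2$-edge paths, which gives $n_0\approx\tfrac{6}{5}n_1$ and no contradiction. But the proposal stops here, and this is the decisive step. The discharging scheme you put in its place is never specified, and it targets the wrong slack: in the bad scenario there are no heavy vertices at all. The missing ingredient is a set of per-type caps that you already computed but only use in summed form. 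A singleton sends out at most $3$ edges and a single-edge component at most $4$, so $e_\circ\le 3a_1$ and $e_\bullet\le 4a_2$; trivially $\gamma\le a_3$. Combining these with your weight-one bound gives
\[
7(e_\circ+e_\bullet+2\gamma)=3(e_\circ+2e_\bullet+2\gamma)+4e_\circ+e_\bullet+8\gamma\le 6n_1+12a_1+4a_2+8a_3 .
\]
Adding seven times your edge-count inequality yields $9a_1+24a_2+27a_3+28c_4\le 6n_1+21n_2+7\delta(T)$, that is, $9n_0\le 6n_1+21n_2+7\delta(T)+8c_4-6a_2$. Now add $9n_1+9n_2$ to both sides. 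Using $15n_1+30n_2=15(W-\overline W)\le 15|S_1|-15\overline W$ and $\lambda(T)=2|S_1|-\delta(T)-2$ from \eqref{eq:edge} gives the lemma immediately.

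So the constants have mundane origins, not the ones you guessed. The term $-6a_2$ is the leftover $24a_2-18a_2$ when comparing with $9n_0$, and $+8c_4$ is $36c_4-28c_4$. The term $-15\overline W$ appears only in the final substitution, and $18=9\cdot 2$. The constraint you propose for weight-two vertices adjacent to single-edge components is vacuous: any $4$-vertex tree containing a vertex of weight two already satisfies \eqref{eq:fourT}, so it yields nothing. Heavy vertices also need no weight-proportional charging beyond $E_{\ge2}\le 3n_2$. As written, your proposal does not establish the inequality, since its key step is an unsolved linear program over unspecified local rules. The actual certificate is the short combination above, which uses only the type caps and Lemma~\ref{lem:unit}.
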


\begin{proof}
The following observations are immediate from the respective definitions: 
\begin{itemize}

\item There are exactly $e_{\circ}+e_{\bullet}+2 \gamma$ edges joining $Z_0$ to weight-one vertices. This is because, by Lemma~\ref{lem:unit}, each  2-edge-path zero component counted in $\gamma$ has exactly one weight-one neighbor, adjacent to both endpoints. 

\item There are at most $3b_{\geq2}$ edges from $Z_0$ to vertices of weight at least two, where $b_{\geq 2} := |\{v \in V(G): \mathrm{wt}(v) \geq 2\}|$ is the number of vertices of weight at least two in $G$, since the maximum degree of $G$ is 3. 

\end{itemize}  
Now, let $e_1$ denote the number of edges with exactly one endpoint in $Z_0$. Adding the degrees over $Z_0$, note that each edge with both end points in $Z_0$ is counted twice and exactly one end point in $Z_0$ is counted once. Hence, $$\sum_{x\in Z_0} d_G(x)=2|E(G[Z_0])|+e_1 .$$
This implies, 
$$3|Z_0|-2|E(G[Z_0])| = e_1 +\sum_{x\in Z_0}(3-d_G(x)).$$
Every edge leaving $Z_0$ leads to a positive-weight vertex. Exactly $e_{\circ}+e_{\bullet}+2\gamma$ such edges lead to weight-one vertices, while at most $3b_{\geq2}$ lead to vertices of weight at least two, since each has degree at
most three. Hence, $e_1 \leq e_{\circ}+e_{\bullet}+2\gamma+3b_{\geq2}$. Moreover, all degree deficits are nonnegative, hence, \eqref{eq:Euler} gives, 
$$ \sum_{x\in Z_0}(3-d_G(x)) \leq\sum_{x\in V(G)}(3-d_G(x)) = \delta(T).$$
Combining these bounds and using $|Z_0|=a_1+2a_2+3a_3+4c_4$ and $|E(G[Z_0])|=a_2+2a_3+4c_4$, we obtain
\begin{align}
 3a_1+4a_2+5a_3+4c_4 =3|Z_0|-2|E(G[Z_0])| \leq e_{\circ}+e_{\bullet}+2\gamma +3b_{\geq2}+\delta(T).
 \label{eq:a}
\end{align}  

Now, recalling the respective definitions and applying the degree bound $d_G(x)\leq3$ gives, $e_{\circ}\leq3a_1$, $e_{\bullet}\leq4a_2$, and $\gamma\leq a_3$. Also, defining $b_1:=|\{v\in V(G):\mathrm{wt}(v)=1\}|$, Lemma~\ref{lem:unit} gives, $e_{\circ}+2e_{\bullet}+2\gamma\leq2b_1$. This is because each weight-one vertex contributes at most two
to the LHS, by the three cases of the lemma. Combining the above, we obtain
\begin{align*}
 7(e_{\circ}+e_{\bullet}+2\gamma) =3(e_{\circ}+2e_{\bullet}+2\gamma) +4e_{\circ}+e_{\bullet}+8\gamma \leq 6b_1+12a_1+4a_2+8a_3.
\end{align*}
Multiplying \eqref{eq:a} by seven and applying the above bound gives, 
$$9a_1+24a_2+27a_3+28c_4 \leq6b_1+21b_{\geq2}+7\delta(T)  .  $$
Using $|Z_0|=a_1+2a_2+3a_3+4c_4$ and rearranging, we obtain 
$$9|Z_0| \leq6b_1+21b_{\geq2}+7\delta(T)+8c_4-6a_2.$$ 
Since  $\overline W = W-b_1-2b_{\geq2}$ and $\lambda(T)=|Z_0|+b_1+b_{\geq2}$, adding $9b_1+9b_{\geq2}$ to both sides of the preceding
inequality gives
\begin{equation}\label{eq:incidence}
9\lambda(T) \leq15W+7\delta(T)+8c_4-15\overline W-6a_2.
\end{equation}
Now, substituting \eqref{eq:edge} and \eqref{eq:W} into \eqref{eq:incidence} and rearranging gives \eqref{eq:points}. 
\end{proof}

\subsubsection{Bounding the Number of Zero $4$-cycles}

By Observation~\ref{obs:dual}(2), each zero $4$-cycle in the dual $G$ corresponds to four triangular faces surrounding an interior vertex of $S_1$ of degree four in the primal. We will refer to this configuration as a \emph{zero wheel} and the four boundary edges of the wheel as its \emph{rim edges}. Recall  that there are $c_4$ zero wheels, with distinct centers.  Each rim edge either lies on the convex hull or is incident with a triangular face outside the wheel. By Observation~\ref{obs:z} (2), this outside face has weight at least two. Further, distinct zero wheels cannot share a rim edge, since their incident zero faces would then belong to the same zero component. The following observation shows that a rim edge of a zero wheel is nonflippable whenever its incident face outside the wheel has weight two. Recall that an interior edge $(u, v)$ of a straight-line triangulation of a planar point set is
\emph{flippable} if its two incident triangular faces together form a convex quadrilateral. On the other hand, an interior edge whose two incident faces form a nonconvex quadrilateral is said to be \emph{nonflippable}.

\begin{figure}[tbp]
  \centering
  \begin{minipage}[t]{.49\textwidth}
    \centering
    \begin{tikzpicture}[scale=1.04]
      \coordinate (u) at (-1.15,0);
      \coordinate (s) at (-1.05,1.9);
      \coordinate (r) at (1.05,2.15);
      \coordinate (v) at (1.15,.12);
      \coordinate (p) at (.05,.95);
      \coordinate (x) at (.03,-1.12);
      \fill[propTeal!13] (x)--(u)--(s)--(r)--(v)--(p)--cycle;
      \draw[black!40,line width=.55pt]
        (u)--(s)--(r)--(v)--(x)--(u)
        (p)--(u) (p)--(s) (p)--(r) (p)--(v);
      \draw[black!45,densely dashed] (u)--(v);
      \draw[propTeal,line width=1.25pt]
        (x)--(u)--(s)--(r)--(v)--(p)--cycle;
      \foreach \q in {u,s,r,v,p,x} \node[propvertex] at (\q) {};
      \node[left] at (u) {$p_1$}; \node[left] at (s) {$p_4$};
      \node[right] at (r) {$p_3$}; \node[right] at (v) {$p_2$};
      \node[right=2pt] at (p) {$p$}; \node[below] at (x) {$q$};
      \fill[propBlue] (-.30,-.42) circle[radius=2pt];
      \fill[propBlue] (.33,-.42) circle[radius=2pt];
      \node[font=\small] at (0,-1.77)
        {$Q_1=(q, p_1, p_4, p_3, p_2, p)$};
    \end{tikzpicture}
  \end{minipage}\hfill
  \begin{minipage}[t]{.49\textwidth}
    \centering
    \begin{tikzpicture}[scale=1.04]
      \coordinate (u) at (-1.15,0);
      \coordinate (s) at (-1.05,1.9);
      \coordinate (r) at (1.05,2.15);
      \coordinate (v) at (1.15,.12);
      \coordinate (p) at (.05,.95);
      \coordinate (x) at (.03,-1.12);
      \fill[propPurple!13] (x)--(v)--(r)--(s)--(u)--(p)--cycle;
      \draw[black!40,line width=.55pt]
        (u)--(s)--(r)--(v)--(x)--(u)
        (p)--(u) (p)--(s) (p)--(r) (p)--(v);
      \draw[black!45,densely dashed] (u)--(v);
      \draw[propPurple,line width=1.25pt]
        (x)--(v)--(r)--(s)--(u)--(p)--cycle;
      \foreach \q in {u,s,r,v,p,x} \node[propvertex] at (\q) {};
      \node[left] at (u) {$p_1$}; \node[left] at (s) {$p_4$};
      \node[right] at (r) {$p_3$}; \node[right] at (v) {$p_2$};
      \node[right=2pt] at (p) {$p$}; \node[below] at (x) {$q$};
      \fill[propBlue] (-.30,-.42) circle[radius=2pt];
      \fill[propBlue] (.33,-.42) circle[radius=2pt];
      \node[font=\small] at (0,-1.77)
        {$Q_2=(q, p_2, p_3, p_4, p_1, p)$};
    \end{tikzpicture}
  \end{minipage}
  \caption{\small{ Illustration for the proof of Observation \ref{obs:boundary}. }}
  \label{fig:flippable-wheel}
\end{figure}

\begin{observation}\label{obs:boundary}
A rim edge of a zero wheel is nonflippable in the triangulation $T$, whenever its incident face outside the wheel has weight two.
\end{observation}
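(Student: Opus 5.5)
We argue by contradiction, following Figure~\ref{fig:flippable-wheel}. Let the zero wheel have center $p$ and rim $(p_1,p_2,p_3,p_4)$ in cyclic order, and let $(p_1,p_2)$ be the rim edge. Let $q$ be the apex of the face $(p_1,p_2,q)$ on the other side of $p_1p_2$. Suppose this face has weight exactly two, with color-$2$ points $b,b'$ in its interior, and suppose $p_1p_2$ is flippable, so that $p_1,q,p_2,p$ form a convex quadrilateral. We will exhibit a color-$1$ hexagon containing at most one interior point. This contradicts the standing assumption that every monochromatic hexagon has at least two interior points.

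The candidates are the two hexagons obtained by removing the triangle $(p,p_1,p_2)$ from the region $\Delta(q,p_1,p_2)\cup(\text{wheel})$ and attaching the quadrilateral through $p$ instead. Concretely,
\[
Q_1=(q,p_1,p_4,p_3,p_2,p),\qquad Q_2=(q,p_2,p_3,p_4,p_1,p).
\]
In $Q_1$ the segment $qp$ replaces the boundary path through $p_2$ on one side. The convexity of $p_1qp_2p$ ensures that the diagonal $qp$ lies inside the union of the two faces $(p_1,p_2,q)$ and $(p_1,p_2,p)$. Consequently $Q_1$ and $Q_2$ are simple polygons, each lying inside the union $U$ of the five faces, namely the four wheel faces and $(p_1,p_2,q)$.

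The first step is to check simplicity using convexity: the diagonal $qp$ crosses no edge of $T$ other than $p_1p_2$. The second step is to track color-$1$ points. Every face of $U$ is a face of $T$, and the wheel faces are zero faces, so the only points of $S$ inside $U$, other than the vertices, are $b$ and $b'$. In particular, neither $Q_1$ nor $Q_2$ has a color-$1$ point in its interior.

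The third step is a counting argument. The diagonal $qp$ splits $\Delta(q,p_1,p_2)\cup\Delta(p,p_1,p_2)$ into the triangles $(q,p_1,p)$ and $(q,p,p_2)$. The interior of $Q_1$ is $U$ minus the closed triangle $(q,p,p_2)$, and the interior of $Q_2$ is $U$ minus $(q,p_1,p)$. General position guarantees that $b$ and $b'$ do not lie on $qp$. Hence each of $b,b'$ lies in exactly one of the triangles $(q,p_1,p)$ and $(q,p,p_2)$.

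We then split into cases. If both points lie in the same triangle, then one of $Q_1,Q_2$ has zero interior points. Otherwise each hexagon contains exactly one of them. In either case some monochromatic hexagon has at most one interior point, which is the desired contradiction.

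The main obstacle is the first step. We must confirm rigorously that $Q_1$ and $Q_2$ are genuinely simple hexagons with six distinct vertices. This requires $q\notin\{p_3,p_4\}$, which follows from the $4$-connectedness of $T$ together with the fact that the rim faces belong to a $4$-cycle component (Observation~\ref{obs:z}). It also requires that the edges $qp$, $qp_1$ and $qp_2$ do not cross the rest of the wheel boundary. For this we will use the fact that the wheel is star-shaped from $p$, and that the edge $qp$ lies in the convex quadrilateral $p_1qp_2p$.
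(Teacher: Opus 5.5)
Your proposal is correct and takes essentially the same route as the paper. Assuming $p_1p_2$ is flippable, you form the same two hexagons $Q_1=(q,p_1,p_4,p_3,p_2,p)$ and $Q_2=(q,p_2,p_3,p_4,p_1,p)$, rule out $q\in\{p_3,p_4\}$ via Observation~\ref{obs:z}(2), and use the diagonal $qp$ to split the two color-$2$ points of the outside face between the hexagons, so one has at most one interior point; you just spell out the simplicity check and the partition of $\{b,b'\}$ more explicitly than the paper does.
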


\begin{proof}
Let $U$ be a zero wheel with central vertex $p$ and rim vertices $(p_1, p_2, p_3, p_4)$, labeled cyclically. Suppose, for a contradiction, that the rim edge $p_1p_2$
is flippable. Let $(p_1,p_2, q)$ be its outside face. By Observation \ref{obs:z} (2), $q$ is distinct from the vertices of the wheel: otherwise, the outside face would share
rim edges with two wheel faces. The other three wheel faces form an empty pentagon
with boundary $(p_1,p_4,p_3,p_2,p)$. Flipping $p_1p_2$ produces the triangles $(p, q, p_1)$ and $(p, q, p_2)$. Attaching either triangle to this pentagon gives one of the two monochromatic hexagons $Q_1=(q,p_1,p_4,p_3,p_2,p)$ and $Q_2=(q,p_2,p_3,p_4,p_1,p)$, as shown in Figure~\ref{fig:flippable-wheel}. Since the outside face $(p_1, p_2, q)$ has at most two interior points, at least one of $Q_1$ and $Q_2$ contains at most one interior point, which is a contradiction.  
\end{proof}

\begin{figure}[htbp]
\centering
\begin{tikzpicture}[
  font=\small,
  vertex/.style={circle,fill=red!65!black,inner sep=1.6pt},
  bluepoint/.style={circle,fill=blue!65!black,inner sep=1.7pt},
  rim/.style={violet!80!black,dashed,line width=1.2pt},
  anglemark/.style={black!60,line width=.55pt}
]
\begin{scope}[scale=.9]
  \coordinate (v) at (0,0);
  \coordinate (u) at (3,0);
  \coordinate (p) at (115:2.3);
  \coordinate (x) at (-120:2.1);
  \coordinate (s) at (.5,4.5);
  \coordinate (r) at (-3.2,2);

  \fill[teal!6] (u)--(s)--(r)--(v)--cycle;
  \fill[teal!18] (p)--(v)--(u)--cycle;
  \fill[orange!18] (x)--(u)--(v)--cycle;
  \draw[black!65,line width=.8pt] (u)--(s)--(r)--(v);
  \draw[black!55,line width=.65pt] (p)--(s) (p)--(r);
  \draw[black!80,line width=1.1pt]
    (p)--(u)--(x)--(v)--cycle;
  \draw[rim] (u)--(v);
  \node[above,font=\footnotesize,text=violet!80!black]
    at (1.45,0) { rim edge };
%
%
  \node at (1.45,-.32) {$F$};
  \node[bluepoint] at (.25,-.90) {};
  \node[bluepoint] at (.85,-.70) {};
%
%
  \node[vertex,label=left:$p_2$] at (v) {};
  \node[vertex,label=right:$p_1$] at (u) {};
  \node[vertex,label=below right:$p$] at (p) {};
  \node[vertex,label=above:$p_3$] at (s) {};
  \node[vertex,label=left:$p_4$] at (r) {};
  \node[vertex,label=below left:$q$] at (x) {};
\end{scope}

\begin{scope}[shift={(6.9,1.7)},scale=1.55]
  \coordinate (v) at (0,0);
  \coordinate (a) at (35:2.3);
  \coordinate (b) at (145:2.3);
  \coordinate (p1) at (-75:2.1);
  \coordinate (p2) at (255:2.0);

  \fill[orange!18] (v)--(a)--(b)--cycle;
  \fill[teal!13] (v)--(p1)--(a)--cycle;
  \fill[teal!13] (v)--(b)--(p2)--cycle;
  \draw[black!80,line width=.9pt]
    (a)--(b)--(p2)--(v)--(p1)--cycle;
  \draw[rim] (a)--(v) (b)--(v);

  \node at (0,1.03) {$F$};
  \node at (.73,-.84) {$\Delta_1$};
  \node at (-.71,-.80) {$\Delta_2$};
  \node[bluepoint] at (-.65,.97) {};
  \node[bluepoint] at (.65,.97) {};
%
%
  \node[vertex,label=below:$r$] at (v) {};
  \node[vertex,label=above right:$a$] at (a) {};
  \node[vertex,label=above left:$b$] at (b) {};
  \node[vertex,label=below right:$p_1$] at (p1) {};
  \node[vertex,label=below left:$p_2$] at (p2) {};
\end{scope}

\node[align=center,text width=5.8cm,anchor=north] at (0,-2.7)
  { (a) };
\node[align=center,text width=5.8cm,anchor=north] at (6.9,-2.7)
  { (b) };
\end{tikzpicture}
\caption{ \small{  Illustration for the proof of Lemma \ref{lem:cycle}: (a) A zero wheel $(p_1, p_2, p_3, p_4)$ with central vertex $p$ and rim edge $(p_1, p_2)$ adjacent to an outside face $F$ with two points of color 2 in the interior and (b) two nonconvex  quadrilaterals with common reflex vertex $r$ share their weight-two face $F$. } }
\label{fig:cycle}
\end{figure}

The next lemma establishes the desired bound on $c_4$, which is the number of  zero wheels with distinct centers.   

\begin{lemma}\label{lem:cycle} 
$6c_4\leq2|S_1|-\delta(T)+3\overline W$, where 
$\overline W$ is as defined in \eqref{eq:points}.   
\end{lemma}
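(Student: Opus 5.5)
The plan is a discharging argument. By \eqref{eq:edge}, the right-hand side equals $\lambda(T)+2+3\overline W$. So it suffices to give every bounded face of $T$ one unit of charge, give each face $f$ with $\mathrm{wt}(f)\ge 2$ an extra $3(\mathrm{wt}(f)-2)$ units, keep $2$ units in reserve, and redistribute so that every zero wheel collects at least $6$ units.

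\emph{Charge from the wheel's own faces.} Each zero wheel owns its four faces. These are zero faces of a $4$-cycle component, so they are disjoint across wheels. This gives $4$ units per wheel, and the wheel still needs $2$ more.

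\emph{Charge through rim edges.} Each wheel has four rim edges, no two wheels share a rim edge, and by Observation~\ref{obs:z}(2) every rim edge not on the hull borders an outside face $F$ with $\mathrm{wt}(F)\ge 2$. Let $F$ send $\tfrac12$ unit across each rim edge it borders. Its budget is $1+3(\mathrm{wt}(F)-2)$, and it borders at most three rim edges.
\begin{itemize}
\item If $\mathrm{wt}(F)\ge 3$, the budget is at least $4\ge \tfrac32$, so $F$ can always pay.
\item If $\mathrm{wt}(F)=2$, the budget is $1$, so $F$ must border at most two rim edges.
\end{itemize}
When every rim edge of a wheel is interior, the wheel thus receives $4\cdot\tfrac12=2$ units, which completes its $6$.

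\emph{Main obstacle: a weight-two face bordering three rim edges.} The key geometric step is to exclude a weight-two face $F$ bordering three rim edges (of three distinct wheels). By Observation~\ref{obs:boundary}, each such rim edge is nonflippable. So the quadrilateral formed by $F$ and the adjacent wheel face has a reflex vertex, at one of the endpoints of the rim edge, which is also a vertex of $F$. With three rim edges and three vertices of $F$, either two rim edges share the same reflex vertex $r$, which is the configuration of Figure~\ref{fig:cycle}(b), or each vertex of $F$ is reflex for exactly one of them.
\begin{itemize}
\item In the shared-vertex case, the angles satisfy $\angle_F(r)+\angle_{\Delta_1}(r)>\pi$ and $\angle_F(r)+\angle_{\Delta_2}(r)>\pi$. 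I would first try to combine this with the $4$-connectivity and the angle sum $2\pi$ around $r$ to force a chain of four faces forming a tree in the dual, or a flip, yielding a color-$1$ hexagon with at most one of the two color-$2$ points of $F$ inside. The hope is that the two points of $F$ can be split by a suitable diagonal through $r$.
\item In the cyclic case, summing the three reflex conditions against the angle sum $\pi$ of $F$ should give a contradiction, or again a hexagon as above.
\end{itemize}
I expect this angle and flip analysis to be the hardest part.

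\emph{Hull rim edges.} A rim edge lying on $CH(S_1)$ receives no charge from an outside face. Here I would use the $-\delta(T)$ term already absorbed into $\lambda(T)+2$: such a wheel has its outer vertices on the hull. I would charge the missing $\tfrac12$ to the reserve $2$ together with hull-incident faces outside wheels, using that a hull edge borders exactly one bounded face. If necessary, I would strengthen the earlier count by observing that a wheel with a hull rim edge has a hull vertex of degree at least $4$, whose other incident faces are not zero faces of that wheel.

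Summing the charges over all wheels then yields $6c_4\le \lambda(T)+2+3\overline W=2|S_1|-\delta(T)+3\overline W$.
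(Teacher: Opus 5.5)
Your reduction to $\lambda(T)+2+3\overline W$ is correct, and so are the first two charging steps: each wheel gets four units from its own faces, and faces of weight at least three can easily pay $\tfrac12$ per rim edge. The scheme breaks at the two steps you leave open.

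\emph{Weight-two faces with three rim edges.} The claim that a weight-two face $F=(u,v,w)$ borders at most two rim edges is not established, and your cyclic-case idea does not work. Let $A,B,C$ be the wheel faces across $uv,vw,wu$, with reflex vertices $u,v,w$ respectively. The conditions $\angle_F(u)+\angle_A(u)>\pi$, $\angle_F(v)+\angle_B(v)>\pi$, $\angle_F(w)+\angle_C(w)>\pi$ sit at three different vertices and involve three different wheel triangles. Summing them only gives $\angle_A(u)+\angle_B(v)+\angle_C(w)>2\pi$. This is compatible with each angle being less than $\pi$: each wheel apex simply lies in the angle vertically opposite to $F$ at the corresponding vertex. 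The shared-vertex case is exactly the configuration of Figure~\ref{fig:cycle}(b), which is consistent. Nothing in Observations~\ref{obs:dual}, \ref{obs:z} or \ref{obs:boundary} prevents the third side of $F$ from also being a rim edge. ``Splitting the two points of $F$ by a diagonal through $r$'' is a hope, not an argument.

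\emph{Hull rim edges.} These are not paid for. Once $-\delta(T)$ has been absorbed into $\lambda(T)+2$, the only extra charge is the constant reserve $2$. A priori the number of hull rim edges can be of order $\delta(T)$. The faces on neighbouring hull edges may themselves be wheel faces, or weight-two faces whose unit is already spent.

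The missing idea, which is how the paper proves the lemma, is to charge each non-hull rim edge with a weight-two outside face to a \emph{vertex} rather than to that face. The vertex is the reflex vertex $r$ of the nonconvex quadrilateral formed by the edge's two incident faces (Observation~\ref{obs:boundary}).
\begin{itemize}
\item Such an $r$ is interior, since the total angle at a hull vertex is below $\pi$.
\item It is not a wheel center, since a center is incident only with zero faces.
\item It serves at most two such edges. Distinct rim edges give distinct zero faces, so two quadrilaterals at $r$ with different weight-two faces would contribute four distinct angles at $r$ summing to more than $2\pi$. Hence they all share one weight-two face, which has only two sides at $r$.
\end{itemize}
This gives $J_2\le 2(|S_1|-\delta(T)-c_4)$ for the number $J_2$ of such edges. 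At most $3\overline W$ rim edges face heavier faces, and at most $\delta(T)$ rim edges lie on the hull. Hence $4c_4\le\delta(T)+2(|S_1|-\delta(T)-c_4)+3\overline W$, which rearranges to the lemma.

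In this count a weight-two face with three rim edges is harmless, because its edges are spread over at least two reflex vertices. The hull rim edges are absorbed by the doubled term $-2\delta(T)$, precisely because hull vertices are never reflex vertices.
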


\begin{proof}
Let $J_2$ and $J_{\geq3}$ count the non-hull rim edges of zero wheels whose outside faces have weight two and at least three, respectively.   

For each rim edge counted by $J_2$, consider the quadrilateral
formed by its two incident faces. By Observation~\ref{obs:boundary}, this quadrilateral is nonconvex. Further, its unique reflex vertex is an endpoint of the rim edge (see, for example, the quadrilateral $(p, p_1, q, p_2)$ in Figure \ref{fig:cycle} (a)). Note that the reflex vertex can neither lie on the convex hull nor can it be a zero-wheel center, which is incident only with zero faces.  Denote by $R$ the set of interior vertices of $T$ that are not centers of a zero wheel. The following facts about $R$ are easy to verify: 

\begin{itemize}

\item $|R| =  |S_1|-\delta(T)-c_4$. This is because $R$ excludes from $S_1$, \(\delta(T)\) hull vertices and the \(c_4\) zero-wheel centers.

\item Each vertex $r \in R$ can be the reflex vertex of at most two nonconvex quadrilaterals as described above. This is because each nonconvex quadrilateral consists of a zero face and a weight-two face whose angles at $r$ sum to more than $\pi$. Since each face of a zero wheel has exactly one rim edge, distinct quadrilaterals have distinct zero faces. 
Hence, any pair of nonconvex quadrilaterals with reflex vertex $r$ must share their weight-two face, otherwise their four distinct face angles would sum to more than $2\pi$. Therefore, all such quadrilaterals share a single weight-two face $F$, as in Figure \ref{fig:cycle} (b). Their rim edges are sides of $F$ incident with $r$. Since a triangle has only two such sides, there are at most two quadrilaterals with reflex vertex $r$.  

\end{itemize}
For each $r\in R$, let $m(r)$ count the rim edges whose associated quadrilateral has reflex vertex $r$. Every edge counted by $J_2$ contributes to exactly one $m(r)$, and the preceding argument shows that $m(r)\leq2$. Hence, $J_2=\sum_{r\in R}m(r) \leq2|R| = 2 (|S_1|-\delta(T)-c_4 )$.

Also,  every face of weight at least three contributes at least one to $\overline W$ and can be the outside face of at most three rim edges. Hence, $J_{\geq3}\leq3\overline W$. The zero wheels have $4c_4$ distinct rim edges, at most
$\delta(T)$ of which lie on the convex hull. Combining the above gives
$$ 4c_4  \leq\delta(T)+J_2+J_{\geq3} \leq\delta(T) +2\left(|S_1|-\delta(T)-c_4\right)+3\overline W.$$
Rearranging proves the result in Lemma \ref{lem:cycle}.  
\end{proof}

\subsubsection{Completing the Proof of
Proposition~\ref{prop:shull}}

Multiplying \eqref{eq:points} by three and applying Lemma \ref{lem:cycle}, we obtain  
\begin{align*}
 9|S_1|
 &\leq48\delta(T)+54+24c_4-45\overline W-18a_2\\
 &\leq48\delta(T)+54
      +4\left(2|S_1|-\delta(T)+3\overline W\right)
      -45\overline W-18a_2\\
 &=8|S_1|+44\delta(T)+54-33\overline W-18a_2.
\end{align*}
Since $\overline W,a_2\geq0$, it follows that
\[
 |S_1|
 \leq44\delta(T)+54-33\overline W-18a_2
 \leq44\delta(T)+54,
\]
contradicting \eqref{eq:shull}. This completes the proof
of Proposition~\ref{prop:shull}. \hfill$\Box$

\section{Proof of Theorem \ref{thm:r}  }\label{sec:rpf}

\subsection{Proof of the Upper Bound}

Fix $r\geq3$, $c\geq3$, and let $\delta_c$ be as defined in the statement of Theorem \ref{thm:r}. Define 
$$s_c:=(r-2)(c-2)+\left\lfloor(r-2)\delta_c\right\rfloor.$$
Note that
$$s_c+1-(r-2)(c-2+\delta_c) = 1+\left\lfloor(r-2)\delta_c\right\rfloor
-(r-2)\delta_c >0.$$
Choose an integer $h_1\geq r$ satisfying
\begin{equation}\label{eq:convexh}
h_1> 2(r-2)+ \frac{(r-2)(c-1)\left( r-1+(2r-7)\delta_c \right ) }{ s_c+1-(r-2)(c-2+\delta_c)}.
\end{equation}
We show that $\mathsf{M}_r^\star(c,s_c)\leq c \mathsf{ES}(h_1)$. Suppose, for a contradiction, that $S$ is a set of at least $c\,\mathsf{ES}(h_1)$ points in general position, colored with $c$ colors, such that every monochromatic simple $r$-gon
contains at least $s_c+1$ points of $S$ in its interior.
Let $S_1,\ldots,S_c$ be the color classes. By the pigeonhole principle, some color class contains at least $\mathsf{ES}(h_1)$ points. Hence, the Erd\H{o}s--Szekeres
theorem guarantees a monochromatic convex $h_1$-gon. Among all monochromatic convex $h_1$-gons determined by $S$, choose one with the minimum number of points of $S$ in its interior. Denote this by $T$. Relabeling the colors if
necessary, assume that its vertices have color $1$. Now, for $1\leq a \leq c$, define
$$T_a:=CH(T)\cap S_a.$$ Let $i_1$ be the number of color $1$ points inside $T$. Thus, 
$$|T_1|=h_1+i_1 \quad \text{ and } \quad |V(CH(T_1))|=h_1.$$
Also, by the minimality of $T$, 
\begin{equation}\label{eq:general-inner-hulls}
|V(CH(T_a))|\leq h_1-1  ,  
\end{equation}
for $2\leq a \leq c$, whenever $|T_a|\geq3$. We now apply Lemma~\ref{lem:r} separately to the color classes as follows: 

\begin{itemize}

\item First, we apply Lemma~\ref{lem:r} to $T_1$. This gives at least
$$\left\lfloor\frac{h_1+i_1-2}{r-2}\right\rfloor + \left\lfloor\frac{i_1}{r-2}\right\rfloor \geq
\frac{h_1+2i_1-2r+4}{r-2}$$
pairwise interior-disjoint $r$-gons whose vertices have color $1$ and whose interiors contain no points of color 1. Each of these $r$-gons contains at least $s_c+1$
points of other colors. Since their interiors are disjoint,
$$\sum_{a=2}^c |T_a| \geq \frac{(s_c+1)(h_1+2i_1-2r+4)}{r-2}.$$
Rearranging the terms gives, 
\begin{equation}\label{eq:outercolor}
-(r-2)\sum_{a=2}^c |T_a|+2(s_c+1)i_1 \leq -(s_c+1)h_1+2(s_c+1)(r-2).
\end{equation}

\item Next, we fix $a \in\{2,\ldots,c\}$ and apply Lemma~\ref{lem:r} to $T_a$ with only the first summand. Specifically, if $|T_a|\geq r$, then applying Lemma~\ref{lem:r} to $T_a$ by dropping the second term gives at least $$\left\lfloor\frac{|T_a|-2}{r-2}\right\rfloor \geq \frac{|T_a|-r+1}{r-2}$$
pairwise interior-disjoint $r$-gons of color $a$ whose interiors contain no points of color $a$. Each of these $r$-gons contains at least $s_c+1$ points of other colors. Hence, there are at least 
$$(s_c+1)\frac{|T_a|-r+1}{r-2}$$
points inside $CH(T_a)$. Since $a\geq2$, the convex hull $CH(T_a)$ lies strictly
inside $T$. The total number of points of colors different from $a$ available in their interiors
is therefore at most $i_1+\sum_{b=2}^c |T_b|-|T_a|.$ Hence, it follows that 
\begin{equation}\label{eq:P}
(r-2)\left(i_1+\sum_{b=2}^c |T_b|-|T_a|\right)
\geq
(s_c+1)(|T_a|-r+1).
\end{equation}
If $|T_a|<r$, this inequality holds trivially because its RHS is nonpositive. Adding \eqref{eq:P} over $a \in \{2,\ldots,c\}$ and rearranging the terms gives,   
\begin{equation}\label{eq:innercolorP}
\left[s_c+1-(c-2)(r-2)\right]\sum_{b=2}^c |T_b| -(c-1)(r-2)i_1 \leq (s_c+1)(c-1)(r-1).  
\end{equation}

\item Finally, we apply the full bound in Lemma~\ref{lem:r} to $T_a$. Specifically, for $|T_a|\geq r$, applying \eqref{eq:general-inner-hulls} gives,  
\begin{align*}
\left\lfloor\frac{|T_a|-2}{r-2}\right\rfloor + \left\lfloor \frac{|T_a|-|V(CH(T_a))|}{r-2} \right \rfloor  \geq \frac{2|T_a|-|V(CH(T_a))|-2r+4}{r-2} \geq \frac{2|T_a|-h_1-2r+5}{r-2}.
\end{align*} 
Counting the points in these polygons as before yields
\begin{equation}\label{eq:Tboth}
(r-2)\left(i_1+\sum_{b=2}^c |T_b|-|T_a|\right)
\geq
(s_c+1)(2|T_a|-h_1-2r+5).
\end{equation}
This inequality also holds when $|T_a|<r$, since then $2|T_a|-h_1-2r+5\leq3-h_1\leq0$.
Adding \eqref{eq:Tboth} over $a \in \{ 2, 3, \ldots, c\}$ gives, 
\begin{equation}\label{eq:innercolorboth}
\left[2(s_c+1)-(c-2)(r-2)\right]\sum_{b=2}^c |T_b| -(c-1)(r-2)i_1 \leq
(s_c+1)(c-1)(h_1+2r-5).
\end{equation}
\end{itemize}

We now combine the inequalities \eqref{eq:outercolor}, \eqref{eq:innercolorP}, and \eqref{eq:innercolorboth} as follows:  Multiply \eqref{eq:outercolor} by $(c-1)\delta_c$, multiply \eqref{eq:innercolorP} by $1-2\delta_c$,
and multiply \eqref{eq:innercolorboth} by $\delta_c$. Note that $\delta_c \in (0, \frac{1}{2}]$, hence all multipliers are nonnegative. This leads to the following simplifications: 
\begin{itemize}

\item The terms involving $h_1$ cancel. 

\item The coefficient of $\sum_{b=2}^c |T_b|$ simplifies to $s_c+1-(r-2)(c-2+\delta_c)$.

\item Using the identity $2\delta_c^2+(2c-3)\delta_c=1$, the coefficient
of $i_1$ simplifies to, 
\begin{align*}
(c-1)\left[2(s_c+1)\delta_c-(r-2)(1-\delta_c)\right] = 2(c-1)\delta_c
\left[s_c+1-(r-2)(c-2+\delta_c)\right].
\end{align*}

\item The constant terms on the RHS of the inequalities simplify to $(s_c+1)(c-1)\left[r-1+(2r-7)\delta_c\right]$.
\end{itemize} 
Combining the above gives, 
$$\left[s_c+1-(r-2)(c-2+\delta_c)\right] \left[ \sum_{b=2}^c |T_b|+2(c-1)\delta_c i_1
\right]  \leq  (s_c+1)(c-1)\left[r-1+(2r-7)\delta_c\right].$$
Since $i_1\geq0$ and
$s_c+1-(r-2)(c-2+\delta_c)>0$, it follows that
\[
\sum_{b=2}^c |T_b|
\leq
\frac{
(s_c+1)(c-1)\left[r-1+(2r-7)\delta_c\right]
}{
s_c+1-(r-2)(c-2+\delta_c)
}.
\]
On the other hand, \eqref{eq:outercolor} and $i_1\geq0$ imply, 
$$h_1 \leq \frac{r-2}{s_c+1} \sum_{b=2}^c |T_b| + 2(r-2).$$
Combining the above inequalities gives, 
$$h_1\leq 2(r-2)+ \frac{(r-2)(c-1)\left[r-1+(2r-7)\delta_c\right]}{ s_c+1-(r-2)(c-2+\delta_c)},$$
contradicting \eqref{eq:convexh}.

\subsection{Proof of the Lower Bound} 
\label{sec:lowerboundpf}

Let $A$ and $B$ be two finite point sets in the plane with pairwise distinct $x$-coordinates. We say that $A$ lies \emph{high above} $B$ if every point of $A$ lies above every line determined by two points of $B$, and every point of $B$ lies below every line determined by two points of $A$.
Using this notion, Horton sets are defined recursively \cite{horton1983sets} as follows (see also \cite[Chapter 3]{discretegeometry}).

\begin{definition}\label{defn:H}
A finite point set with distinct
$x$-coordinates $H=\{p_1,p_2,\ldots,p_n\}$, labeled in increasing order of their $x$-coordinates, is called a {\it Horton set} if the following conditions hold recursively:
\begin{itemize}
    \item if $|H|\leq 2$, then $H$ is a Horton set, 
    \item if $|H|\geq 3$, then the sets
    $$H^{-} :=  \{p_1,p_3,p_5,\ldots\} \quad\text{ and } \quad H^{+} := \{p_2,p_4,p_6,\ldots\}$$ are both Horton sets, and $H^{+}$ lies high above $H^{-}$. The sets $H^{+}$ and $H^{-}$ will be referred to as the {\it upper child} and {\it lower child} of $H$, respectively.  
\end{itemize} 
\end{definition}

First suppose that $c=2q+1$ is odd. Let $H=\{h_1,\ldots,h_n\}$ be a Horton set, indexed in increasing order of $x$-coordinates. Let $\phi$ be a coloring of the points of $H$ defined as follows (see Figure \ref{fig:pointsh}):   
\[
 \phi(h_s)=1+((s-1)\bmod c).
\]
Hence, the colors repeat cyclically in the order $1,2,\ldots,c$. If $B=\{b_1,\ldots,b_m\}$ is a recursive subset of $H$ at depth $d$, listed in increasing $x$-order, then $ b_j=h_{a+2^d(j-1)}$ for some integer $a \geq 1$. Since $c$ is odd, 
\begin{equation}\label{eq:horton-color-period}
 \phi(b_i)=\phi(b_j)
 \quad\Longleftrightarrow\quad
 i\equiv j\pmod c.
\end{equation}
In particular, the coloring on every recursive subset is periodic with period $c$, up to a permutation of the colors. We also use the following geometric property of Horton sets: if $u$ and $v$ belong to different children of a recursive subset $D$, then every point of $D^+$ whose $x$-coordinate lies strictly between the $x$-coordinates of $u$ and $v$ lies above the line $uv$, while every such point of $D^-$ lies below that line (see, for example, the proof of Lemma 4.2 in \cite{interiorpoints}).

\begin{figure}[h]
\centering
\begin{tikzpicture}[
  x=.70cm,y=.040cm,font=\small,
  color1/.style={red!75!black},
  color2/.style={blue!75!black},
  color3/.style={green!50!black},
  hexagon/.style={draw=red!75!black,line width=1pt,
                   fill=red!8},
  diagonal/.style={draw=red!60!black,dashed,line width=.55pt}
]
\foreach \i/\y in {
  1/0,2/91,3/14,4/105,5/4,6/95,7/18,8/109,
  9/0,10/91,11/14,12/105,13/4,14/95,15/18,16/109}
  \coordinate (h\i) at ({\i-1},\y);

\path[hexagon] (h1)--(h4)--(h10)--(h16)--(h13)--(h7)--cycle;
\draw[diagonal] (h4)--(h7)--(h10)--(h13);
%
%
\foreach \i in {1,...,16}{
  \pgfmathtruncatemacro{\pointcolor}{mod(\i-1,3)+1}
  \fill[color\pointcolor] (h\i) circle[radius=2.2pt];
}

\foreach \i/\pos in {
  1/below,2/above,3/above,4/above,5/below,6/below,
  7/below,8/above,9/below,10/above,11/above,12/above,
  13/below,14/below,15/below,16/above}
  \node[\pos=3pt] at (h\i) {$h_{\i}$};

\node[anchor=west,font=\normalsize] at (16,100) {$H^{+}$};
\node[anchor=west,font=\normalsize] at (16,9) {$H^{-}$};
%
%
\end{tikzpicture}
\caption{ \small{  A Horton set of size 16 colored with 3 colors in a cyclic manner as above. The monochromatic hexagon $(h_1,h_4,h_{10},h_{16},h_{13},h_7)$ contains exactly the four points $h_3,h_6,h_{11},h_{14}$ in the interior.  }  }  
\label{fig:pointsh}
\end{figure}

\begin{observation}  
\label{obs:q} 
Every monochromatic triangle in a Horton set $H$ colored with $c= 2q+1$ colors as above, contains at least $q$ points of $H$ in its interior. 
\end{observation}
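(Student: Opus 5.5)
We argue by induction on $|H|$, following the recursive structure in Definition~\ref{defn:H}; the claim is empty when $|H| \le 2$. Let $u,v,w$ be a monochromatic triangle in $H$, and let $D$ be the smallest recursive subset of $H$ that contains all three points. If all three lie in one child of $D$, then that child is a smaller Horton set. By \eqref{eq:horton-color-period}, its coloring is again cyclic with period $c$, up to a permutation of the colors. The induction hypothesis then gives at least $q$ interior points. So we may assume the three vertices are split between $D^+$ and $D^-$. By symmetry (reflect vertically), we take two of them, $u$ and $w$ with $x(u)<x(w)$, in one child, and the third point $v$ in the other child. Write the points of $D$ as $b_1,\dots,b_m$ in $x$-order.

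The key step is to locate the interior points in the gap between $u$ and $w$. Let $u=b_i$, $v=b_j$, $w=b_k$. By \eqref{eq:horton-color-period}, the indices $i,j,k$ are pairwise congruent mod $c$. Since $u$ and $w$ lie in the same child, $i\equiv k \pmod 2$, and $c$ is odd, so $k-i\ge 2c$. The lower child has odd indices and the upper child even indices, so between $u$ and $w$ the child of $u,w$ contains at least $c\ge 2q+1$ points, and the other child contains at least $c$ points as well.

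Suppose $u,w\in D^-$ and $v\in D^+$. Consider the points of $D^-$ strictly between $u$ and $w$ in $x$-order. By the cited geometric property, they lie below every line joining a point of $D^-$ to a point of $D^+$, in particular below $uv$ and $vw$ within the appropriate $x$-ranges. We also need them above $uw$. For this we apply recursion inside $D^-$: the points of $D^-$ between $u$ and $w$ on the side opposite to $u,w$ in the relevant sub-child lie above $uw$. Iterating down to the smallest recursive subset containing $u$ and $w$, its other child contributes points between $u$ and $w$ that lie above the line $uw$. The index gap $k-i\ge 2c$ in $D$ corresponds to at least $c$ consecutive positions in $D^-$. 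Splitting by parity at the level where $u$ and $w$ separate, we get at least $\lfloor c/2\rfloor = q$ such points. The same separation argument shows these points lie in the same horizontal strip below $v$'s lines, so all of them are inside the triangle. The case $u,w\in D^+$ is symmetric.

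The main obstacle is the bookkeeping in the last step: proving that points between $u$ and $w$ which are above $uw$ are also on the correct side of the other two edges. Counting them as at least $q$ needs the fact that the index difference in the sub-Horton set is a multiple of $c$, hence at least $c$. Halving this count gives $\lceil (c-1)/2\rceil \ge q$ points of the opposite sub-child. I would check this step carefully on the example in Figure~\ref{fig:pointsh}.
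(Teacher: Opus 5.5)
Your overall route is the paper's. You take the deepest recursive subset $D$ containing the three vertices, so that a pair $u,w$ lies in one child (say $D^-$) and $v$ in the other. You then pass to the deepest recursive subset $E\subseteq D^-$ containing $u$ and $w$. There the index difference $t$ of $u,w$ is a multiple of $c$ by \eqref{eq:horton-color-period}, and it is odd because $u,w$ lie in different children of $E$. So $t\ge c$, and the $(t-1)/2\ge q$ points of $E^+$ strictly between $u$ and $w$ lie above $uw$. Your write-up of this count is muddled, though. At the level $E$ these points come from $E^+$, which itself contains one of $u,w$, so there is no ``other child'' or ``opposite sub-child'' to speak of. The induction on $|H|$ is vacuous because $D$ is minimal. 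Also, the number of same-child points of $D$ between $u$ and $w$ is at least $c-1$, not $c$, though you never use that claim.

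The genuine gap is the step you flag yourself: showing that these points $z$ lie in the interior of $\triangle uvw$. Your suggested justification applies the property for lines joining different children to the edges $uv$ and $vw$, and it does not work as written. That property only controls points whose $x$-coordinate lies between the endpoints of the edge. Suppose $v$ lies outside the $x$-interval of $u$ and $w$, say $x(v)>x(w)$. Then no $z$ lies between $v$ and $w$, and the correct requirement for the edge $vw$ is that $z$ lie \emph{above} that line (on the side of $u$), not below it. The ``horizontal strip'' remark does not supply an argument for this.

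The missing idea is that $v$ is high above $D^-$. Since $v\in D^+$, it lies above the lines $uz$ and $zw$ for every $z\in D^-$. Because $x(u)<x(z)<x(w)$ and $z$ is above $uw$, the slope of $uz$ exceeds the slope of $zw$. Hence the region above both lines is exactly the open cone $\{z+\alpha(z-u)+\beta(z-w):\alpha,\beta>0\}$. Writing $v$ in this form gives $z=(v+\alpha u+\beta w)/(1+\alpha+\beta)$, an interior point of the triangle, and this works for every position of $v$. This is exactly how the paper finishes. With it, and the mirror case with above and below interchanged, your count of $q$ interior points goes through.
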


\begin{proof}  
To prove this, let $\triangle uvw$ be monochromatic, and choose the deepest recursive subset $B$ containing the three vertices. By the choice of $B$, one child $A$ contains
two of the vertices, suppose they are $u, v$, and the other child $A'$ contains $w$. Choose the deepest recursive subset $D\subseteq A$ containing both $u$ and $v$, and suppose that their positions in the $x$-ordering of $D$ differ by $t$. By \eqref{eq:horton-color-period}, $c$ divides $t$, and hence, $t\geq c$. Also, by the choice of $D$, the vertices $u$ and $v$ belong to different children, which means $t$ is odd.  There are exactly $t-1$ points of $D$ strictly between $u$ and $v$ in $x$-order, with $(t-1)/2$ in each child. The geometric property stated above places these two groups on opposite sides of the line $uv$. Hence, at least $\frac{t-1}{2}\geq\frac{c-1}{2}=q$ points of $D$ lie between $u$ and $v$ in $x$-order and on the same side of $uv$ as $w$. By the property of a Horton set, for any such point $z$ that lies above $uv$, the point $w$ lies above both $uz$ and $zv$. These conditions imply that $z$ lies in the interior of $\triangle uvw$. The case in which $w$ lies below the line $uv$ follows by the same argument, with above and below interchanged. This proves the result in Observation \ref{obs:q}.  
\end{proof}

Now let $P$ be any monochromatic simple $r$-gon determined by $H$. Triangulate $P$ using diagonals joining its vertices (see Figure \ref{fig:pointsh}). This produces $r-2$ monochromatic triangles
$\Delta_1,\ldots,\Delta_{r-2}$ with pairwise disjoint interiors, all contained in $P$. By Observation \ref{obs:q},  
$$|H\cap\operatorname{int}(P)| = \sum_{i=1}^{r-2} |H\cap\operatorname{int}(\Delta_i)| \geq (r-2)q.$$
Finally, suppose that $c=2q+2$ is even. Take a Horton set $H=\{h_1,\ldots,h_n\}$ and let $H'=\{h_1,\ldots,h_{n-1}\}$, which is also a Horton set.  Color $H'$ cyclically with $2q+1$ colors as above, and assign the point  $h_n$ the remaining color. Since the last color occurs only once, every monochromatic $r$-gon, with $r\geq3$, has all its vertices in $H'$. By the odd-color case, it contains at least $(r-2)q$ points of $H'$, and hence at least that many points of $H$. In both cases, $q=\lfloor\frac{c-1}{2}\rfloor$. Since Horton sets exist with arbitrarily many points, this implies,  
$$\mathsf{M}_r^\star\left(  c,  (r-2)\left\lfloor\frac{c-1}{2}\right\rfloor-1 \right)=\infty.$$
This proves the lower bound on $\lambda_r^\star(c)$ in Theorem \ref{thm:r}.  \hfill  $\Box$

\section{Conclusions} 
\label{sec:conclusions}

In this paper, we study monochromatic polygons with few interior points in colored planar point sets. Improving the bounds on $\lambda_r^\star(c)$ remains an interesting direction for further research. The bichromatic case is particularly intriguing. As already mentioned, it is possible that $\lambda_r^{\star}(2)=0$ for every fixed $r\geq3$; that is, every sufficiently large $2$-colored planar point set
in general position contains a monochromatic empty $r$-gon, not necessarily convex. Proposition~\ref{prop:H} establishes this assertion for sufficiently large Horton sets. The first unresolved case is $r=5$, which leads to the following question. Since every simple pentagon is $2$-convex, this question is equivalently phrased in terms of the existence of empty monochromatic $2$-convex pentagons in
sufficiently large bichromatic point sets (see \cite[Problem 4.1]{aichholzer2009color}).

\begin{question}\label{question:fivetwo}
Does every sufficiently large $2$-colored planar point set in general
position contain a monochromatic empty $5$-gon, not necessarily convex?
\end{question}

Finally, for each fixed $r$, our lower and upper bounds on $\lambda_r^\star(c)$ have leading coefficients of $c$ equal to $\frac{r-2}{2}$ and $r-2$, respectively, leaving a gap of a factor of two. This gap persists even for triangles (recall \eqref{eq:almostempty3}), where the correct leading coefficient remains unknown. For large $c$, iterating the nested convex-hull argument over additional steps may improve the additive correction in the upper  bound, as in the case of triangles~\cite{interiorpoints}. However, this approach does not appear to improve the leading coefficient of $c$. Determining the asymptotic behavior of $\lambda_r^\star(c)$ and, in particular, closing the gap between the leading constants remains an interesting open problem.

\small

\subsection*{Acknowledgements} BBB was supported by NSF CAREER grant DMS 2046393, a Sloan Research Fellowship, and the National University of Singapore's PESS and Provost's Chair funds.

\subsection*{AI disclosure} GPT 5.6 Sol and GPT 6 Astra were used to brainstorm proof ideas, verify some of the mathematical arguments, refine grammar and typography, and prepare the figures.  The authors assume full responsibility for all content.

\bibliographystyle{abbrvnat} 
\bibliography{refIII.bib}

\normalsize

\appendix

\section{ Proof of Proposition \ref{prop:H}}
\label{sec:Hpf}

Throughout, $H=\{p_1, \ldots, p_m\}$ will be a 2-colored Horton set with points in increasing order of their $x$-coordinates. 

\begin{definition}
A pair of points $p_u, p_v \in H$, with $1 \leq u < v \leq m$, is said to be 
\emph{upper-visible} in $H$ if every point $p_w \in H$, with $1 \leq u< w < v \leq m$,
lies strictly below the line joining $p_u$ and $p_v$.
\end{definition}

Using the notion upper-visibility, we construct monochromatic chains in $H$ as follows:

\begin{lemma}
\label{lem:chain}
Let $L\geq2$ be an integer. Every $2$-colored Horton set $H$ with at least $2^{2L-1}$ points contains $L$ monochromatic points in increasing order of their $x$-coordinates whose consecutive edges are upper-visible in $H$.
\end{lemma}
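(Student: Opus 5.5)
The plan is an Erd\H{o}s--Szekeres-type induction on two parameters. It rests on two facts about upper-visibility in Horton sets.

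\emph{(i) Lifting.} If $p_u,p_v\in H^+$ are upper-visible in $H^+$, then they are upper-visible in $H$. Indeed, every point of $H^-$ with $x$-coordinate between them lies below every line spanned by two points of $H^+$, because $H^+$ lies high above $H^-$. The same holds for any recursive subset obtained by repeatedly taking upper children.

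\emph{(ii) Adjacency.} Two points that are consecutive in the $x$-order of a set are trivially upper-visible in that set. Hence, by (i), consecutive points of any iterated upper child are upper-visible in $H$.

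I would prove a stronger, asymmetric statement, since Lemma~\ref{lem:chain} only concerns the symmetric case. Let $P(a,b)$ assert the following: every $2$-colored Horton set with at least $2^{a+b-1}$ points contains either a color-$1$ chain with $a$ points or a color-$2$ chain with $b$ points. Here a chain means points in increasing $x$-order whose consecutive pairs are upper-visible. To make the inductive step possible, I would additionally require the chain to begin at the leftmost point of the set that has the relevant color, or at the leftmost point of a suitable upper child. Lemma~\ref{lem:chain} then follows by taking $a=b=L$, since $2^{2L-1}=2^{a+b-1}$.

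The base cases $a=1$ or $b=1$ are trivial, because a single point is a chain. For the inductive step, assume $a,b\geq 2$, and let $p$ be the leftmost point of $H$ (which lies in $H^-$), say of color $1$. Pass to $H^+$, which has at least $2^{a+b-2}$ points and is again a Horton set. Apply $P(a-1,b)$ to $H^+$, or a version restricted to the points of $H^+$ to the right of $p$. There are two outcomes.
\begin{itemize}
\item If it yields a color-$2$ chain of length $b$, this chain lifts to $H$ by (i), and we are done.
\item Otherwise it yields a color-$1$ chain of length $a-1$ in $H^+$. We prepend $p$ to it. This requires that $p$ and the first point $q$ of the chain be upper-visible in $H$. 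The points strictly between $p$ and $q$ consist of points of $H^-$, which must lie below the line $pq$, and points of $H^+$ preceding $q$.
\end{itemize}
The role of the strengthened hypothesis is precisely to control the second group. It forces $q$ to be the leftmost color-$1$ point of $H^+$, or places $q$ so that the points of $H^+$ before it are below $pq$.

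The main obstacle is this extension step: guaranteeing that the new point $p\in H^-$ sees the first chain point $q\in H^+$. My first attempt would use the Horton-set property quoted in Section~\ref{sec:lowerboundpf}. When $p$ and $q$ lie in different children of a recursive subset $D$, the points of $D^-$ between them lie below $pq$, while those of $D^+$ lie above $pq$. Hence it suffices to arrange that no point of $H^+$ lies strictly between $p$ and $q$ in $x$-order. Since $p$ is the leftmost point, this amounts to taking $q$ to be the leftmost point of $H^+$.

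So the invariant I would carry is this: the chain produced in a Horton set starts at its leftmost point, whenever that point has the color of the chain. If the leftmost point of $H^+$ has the wrong color, I would instead recurse on the next level $H^{++}$, or swap the roles of $a$ and $b$. In doing so I would charge each halving of the point set to a decrease of $a+b$ by one, so that the size bound $2^{a+b-1}$ closes. Verifying that this bookkeeping really costs only a factor of $2$ per unit decrease of $a+b$, and not a factor of $4$, is the delicate point. If it fails, the fallback is to accept a $4^{a+b}$-type bound and adjust the constant.
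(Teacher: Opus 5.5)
Your facts (i) and (ii) are correct; together they are the paper's observation that consecutive points of every iterated upper child $H_t$ are upper-visible in $H$. However, the inductive step fails, and the invariant you propose to carry is false.

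The leftmost point $p_1$ of a Horton set forms an upper-visible pair with exactly one point, namely $p_2$. For $q=p_j$ with $j\geq 3$, the point $p_2\in H^+$ lies strictly between $p_1$ and $q$ and above the line $p_1q$. If $q\in H^-$ this is because $H^+$ lies high above $H^-$; if $q\in H^+$ it follows from the Horton property you quote, with $D=H$. So prepending $p_1$ requires the chain in $H^+$ to start at $p_2$ itself, since points of $H^+$ preceding $q$ are never below $p_1q$.

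Now color $p_2$ with color $2$ and all other points with color $1$. Every color-$1$ chain starting at $p_1$ (the leftmost point of $H$, and also the leftmost point of color $1$) has a single point. There is also no color-$2$ chain with two points. Hence your strengthened hypothesis fails for all $a,b\geq 2$ and arbitrarily large $H$. Relaxing it to chains starting at the leftmost point of a deeper upper child avoids this example, but then $p_1$ can no longer be prepended. The remedies you mention (recursing to $H^{++}$, swapping $a$ and $b$) are not specified. A $4^{a+b}$ fallback would not give the stated threshold $2^{2L-1}$ anyway.

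The missing idea is to maintain \emph{two} monochromatic chains that meet at a common frontier, rather than one chain anchored at the leftmost point. The paper argues top-down.
\begin{enumerate}
\item Start with consecutive points $p_u,p_v$ of $H_{2L-2}$; the left chain ends at $p_u$ and the right chain starts at $p_v$.
\item Passing from $H_t$ to $H_{t-1}$, let $p_k$ be the unique point of $H_{t-1}$ between $p_u$ and $p_v$. It is consecutive in $H_{t-1}$ with both, hence upper-visible in $H$ with both.
\item Append $p_k$ to the left chain or prepend it to the right chain according to its color. The new frontier pair is again consecutive in $H_{t-1}$.
\item This gains one vertex per level, except at most once: when $p_u,p_v$ share a color different from that of $p_k$. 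At that moment the left chain is still the singleton $(p_u)$ and is replaced by $(p_k)$.
\end{enumerate}
After $2L-2$ levels the two chains have at least $2L-1$ vertices in total, so one of them has $L$. This frontier is what keeps the cost at a factor of $2$ per added vertex. A single-chain induction cannot guarantee that the one point it adds per halving has both the right color and the right position.
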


\begin{proof}
Define $H_0:=H$ and $H_t:=(H_{t-1})^+$, for $1\leq t\leq2L-2$.
Thus, each set is the upper child of the preceding set. Observe that every edge joining consecutive points of $H_t$ is upper-visible in $H$. Since $|H|\geq2^{2L-1}$, 
$$|H_{2L-2}| \geq \left\lfloor\frac{|H|}{2^{2L-2}}\right\rfloor \geq 2.$$
Choose consecutive points $p_u,p_v$ in this set, with $u < v$.
Initialize the singleton left and right chains
$\mathcal C^{-}_{2L-2}:=(p_u)$ and
$\mathcal C^{+}_{2L-2}:=(p_v)$, respectively.
For $t\in\{2L-2,2L-3,\ldots,1\}$, we maintain two increasing
monochromatic chains in $H$:
$$\mathcal C^{-}_t=(p_{x_1},\ldots,p_{x_a}) \quad\text{ and }\quad \mathcal C^{+}_t=(p_{y_1},\ldots,p_{y_{b}}), $$
with current endpoints $p_{x_a}=p_u$ and $p_{y_1}=p_v$, such that every edge joining consecutive vertices of either chain is upper-visible in $H$, and $p_u, p_v$ are consecutive in $H_t$.

Having constructed the chains $\mathcal C^{-}_t$ and $\mathcal C^{+}_t$, let $p_k$ be the unique point of $H_{t-1}$ between $p_u$ and $p_v$ in order of their $x$-coordinates. Both edges $(p_u,p_k)$ and $(p_k,p_v)$ join consecutive points of $H_{t-1}$, hence, both of these pairs are upper-visible in $H$. Apply the first applicable case below: 

\begin{itemize}

\item If $p_k$ has the color of $p_v$, update the chains as follows:
$$
\mathcal C^{-}_{t-1}:=\mathcal C^{-}_t \quad\text{ and }\quad \mathcal C^{+}_{t-1}=(p_k,p_{y_1},\ldots,p_{y_{b}}).$$
The new endpoints are $(p_u,p_k)$.

\item Otherwise, if $p_k$ has the color of $p_u$, update the
chains as follows:
$$
\mathcal C^{-}_{t-1}=(p_{x_1},\ldots,p_{x_a},p_k) \quad\text{ and }\quad
\mathcal C^{+}_{t-1}=\mathcal C^{+}_t. $$
The new endpoints are $(p_k,p_v)$.

\item In the remaining case, $p_u$ and $p_v$ have the same color, different from that of $p_k$. In this case, $\mathcal C^{-}_t=(p_u)$. This is because, once the endpoints have different colors, the first two cases preserve that property. Thus, before the present case, the endpoints have always had the same color, and every update has used the first case, leaving the left chain unchanged. Then update as follows:
$$\mathcal C^{-}_{t-1}:=(p_k) \quad\text{ and }\quad \mathcal C^{+}_{t-1}:=\mathcal C^{+}_t.$$
The new endpoints are $(p_k, p_v)$.

\end{itemize}

Note that each of the above updates preserves the chain properties and makes the new endpoints consecutive in $H_{t-1}$, hence, the construction continues. Also, note that the third case occurs at most once, since it makes the colors of the endpoints different, and it preserves the total number of chain vertices. In both the other cases, the number of chain vertices increase by one. Hence, after $2L-2$ iterations,  
$$|\mathcal C^{-}_0|+|\mathcal C^{+}_0| \geq 2+(2 L - 2)-1=2L-1.$$
Then, one of these chain has at least $L$ vertices. Considering $L$ consecutive vertices of that chain proves the result in Lemma \ref{lem:chain}.    
\end{proof}

The next observation shows that an upper visible chain from $H^-$ can be combined with a point in $H^+$ to obtain an empty $r$-gon.

\begin{observation}\label{obs:rtriangle}
Suppose $p_{a_1},\ldots, p_{a_{r-1}}\in H^{-}$, with $a_1 < a_2 < \ldots, a_{r-1}$, and every pair of points $p_{a_i}, p_{a_{i+1}}$ is upper-visible in $H^{-}$, for $1 \leq i \leq r - 2$. Then for any $p_b\in H^+$, $(p_b, p_{a_1}, p_{a_2},\ldots, p_{a_{r-1}})$ is an empty $r$-gon in $H$.
\end{observation}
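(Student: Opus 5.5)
The plan is to show that the $r$-gon $P=(p_b,p_{a_1},\ldots,p_{a_{r-1}})$ is the union of the fan of triangles $\Delta_i:=\triangle p_b\,p_{a_i}\,p_{a_{i+1}}$, $1\le i\le r-2$, with apex $p_b$. I would first show that this fan is a simple polygon, and then that no point of $H$ lies in the interior of any $\Delta_i$ or in the relative interior of any diagonal $p_bp_{a_i}$. The only input is that $H^+$ lies high above $H^-$, used in both directions, together with upper-visibility in $H^-$.

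\emph{Step 1 (angular order around $p_b$).} Since $p_b\in H^+$ lies above every line determined by two points of $H^-$, for any $x,y\in H^-$ with $x$ to the left of $y$, the triple $(x,y,p_b)$ is counterclockwise. Consequently, as seen from $p_b$, the points of $H^-$ appear in angular order equal to their $x$-order. All of them lie in the open half-plane below $p_b$ determined by the line $p_{a_1}p_{a_{r-1}}$, so the total angle spanned is less than $\pi$. Hence the rays $p_bp_{a_1},\ldots,p_bp_{a_{r-1}}$ are angularly sorted, and the triangles $\Delta_i$ occupy consecutive, interior-disjoint angular wedges. Their union is a star-shaped, hence simple, polygon with boundary $(p_b,p_{a_1},\ldots,p_{a_{r-1}})$. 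This is the polygon $P$, and it is an $r$-gon.

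\emph{Step 2 (no point of $H^-$ inside).} Let $z\in H^-$ lie in $\Delta_i$ or on the diagonal $p_bp_{a_i}$, other than at a vertex. By Step 1, $z$ lies in the closed angular wedge at $p_b$ between $p_{a_i}$ and $p_{a_{i+1}}$. Because the angular order coincides with the $x$-order and general position excludes collinearity, this forces $a_i<z<a_{i+1}$ in $x$-order. Upper-visibility of the pair $(p_{a_i},p_{a_{i+1}})$ in $H^-$ then puts $z$ strictly below the line $p_{a_i}p_{a_{i+1}}$. But $p_b$ is above that line, so $\Delta_i$ lies on or above it, which contradicts $z\in\Delta_i$. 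The diagonal case is subsumed, since a point on $p_bp_{a_i}$ would be collinear with $p_b$ and $p_{a_i}$.

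\emph{Step 3 (no point of $H^+$ inside).} Let $z\in H^+\setminus\{p_b\}$ lie in the interior of some $\Delta_i$. Then the line through $p_b$ and $z$ passes through the interior of $\Delta_i$ at its apex, so it strictly separates $p_{a_i}$ from $p_{a_{i+1}}$. This is impossible, because every point of $H^-$ lies below every line determined by two points of $H^+$, so both $p_{a_i}$ and $p_{a_{i+1}}$ lie below the line $p_bz$. Points on the diagonals $p_bp_{a_i}$ are excluded by general position.

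Combining the three steps, $P$ is an empty simple $r$-gon. I expect Step 1, the careful justification that the fan is simple with the stated boundary, to be the only mildly delicate point; the emptiness arguments are direct applications of the high-above property.
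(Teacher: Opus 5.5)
Your proof is correct and takes the same route as the paper. You split $P$ into the fan of triangles $\triangle p_b p_{a_i} p_{a_{i+1}}$ and show each is empty: Steps~2--3 prove the paper's one-line claim that the only possible interior points of such a triangle are points of $H^-$ strictly between $p_{a_i}$ and $p_{a_{i+1}}$ in $x$-order and above their line, and Step~1 checks the simplicity of the fan, which the paper takes for granted.

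One small repair in Step~1: intermediate chain vertices can lie on the same side of the line $p_{a_1}p_{a_{r-1}}$ as $p_b$ (e.g., $h_3$ relative to $h_1h_{15}$ in the paper's figure), so your half-plane sentence is not right as stated. Justify the angular span $<\pi$ instead by noting that $(p_b,p_{a_1},p_{a_j})$ and $(p_b,p_{a_j},p_{a_{r-1}})$ are counterclockwise for every $j$. This places every ray inside the angle of triangle $p_bp_{a_1}p_{a_{r-1}}$ at $p_b$.
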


\begin{proof}
Note that, for $p_u, p_v \in H^-$, with $u < v$ and $p_b\in H^+$, the interior points of triangle $(p_u, p_v, p_b)$ are precisely the points of $H^-$ lying between
$p_u$ and $p_v$ in order of their $x$-coordinates and above the line joining $p_u, p_v$. This means that the triangle $(p_b, p_{a_i}, p_{a_{i+1}})$ is empty in $H$, for all $1 \leq i \leq r - 2$ (see Figure~\ref{fig:horton-visible-fan}) and, hence, the $r$-gon $(p_b, p_{a_1}, p_{a_2},\ldots, p_{a_{r-1}})$ is empty in $H$. 
\end{proof}

\begin{figure}[h!]
\centering
\begin{tikzpicture}[x=.70cm,y=.041cm,font=\small,
  selected/.style={fill=hred,draw=white,line width=.3pt},
  other/.style={fill=hblue,draw=white,line width=.3pt}]  
\foreach \i/\y in {
  1/0,2/91,3/14,4/105,5/4,6/95,7/18,8/109,
  9/0,10/91,11/14,12/105,13/4,14/95,15/18,16/109}
  \coordinate (h\i) at ({\i-1},\y);
\foreach \j/\i in {1/1,2/3,3/7,4/11,5/15}
  \coordinate (a\j) at (h\i);
\coordinate (b) at (h8);

\fill[hred!8] (b)--(a1)--(a2)--(a3)--(a4)--(a5)--cycle;
\fill[hred!15] (b)--(a2)--(a3)--cycle;
\fill[hred!15] (b)--(a4)--(a5)--cycle;
\foreach \j in {2,3,4}
  \draw[hred!65,dashed,line width=.65pt] (b)--(a\j);
\draw[hred!80!black,line width=.9pt] (a1)--(b)--(a5);
\draw[hred,line width=1.4pt] (a1)--(a2)--(a3)--(a4)--(a5);

\foreach \i in {2,4,5,6,9,10,12,13,14,16}
  \path[other] (h\i) circle[radius=2.3pt];
\foreach \i in {1,3,7,8,11,15}
  \path[selected] (h\i) circle[radius=2.6pt];
\foreach \j in {1,...,5}
  \node[below=4pt,text=hred!85!black] at (a\j) {$p_{a_{\j}}$};
\node[above=4pt,text=hred!85!black] at (b) {$p_b$};
\node[anchor=west] at (15.6,100) {$H^+$};
\node[anchor=west] at (15.6,9) {$H^-$};
%
%
\end{tikzpicture}
\caption{\small{ A 2-colored Horton set of size 16. The upper-visible
chain $p_{a_1},\ldots, p_{a_5}$ and the apex vertex $p_b$ span an empty, monochromatic hexagon. } }
\label{fig:horton-visible-fan}
\end{figure}

If $|H|\geq4^{r-1}=2^{2r-2}$, each child of $H$ has at least $2^{2r-3}$ points. By Lemma~\ref{lem:chain}, $H^{-}$ contains a monochromatic increasing upper-visible chain $p_{a_1},\ldots, p_{a_{r-1}}$. Without loss of generality, suppose the points of this chain have color 1. If $H^+$ has a point $p_b$ of color 1, by Observation~\ref{obs:rtriangle},  $(p_b, p_{a_1} ,\ldots, p_{a_{r-1}})$ is an empty $r$-gon. If not, all points of $H^+$ are colored 2. Since $2^{2r-3}\geq r$ for $r\geq3$, it contains $r$ consecutive points. Their convex hull contains
no other points of $H$, hence a simple polygonization of these points is a monochromatic empty $r$-gon.  \hfill  $\Box$

\end{document}